\newtheorem{thm}{Theorem}[section]
\newtheorem{lem}[thm]{Lemma}
\newtheorem{prop}[thm]{Proposition}
\newtheorem{cor}[thm]{Corollary}
\renewcommand{\@seccntformat}[1]{\S{\csname
the#1\endcsname}\hspace{0.5em}}
\begin{document}

\title{Difference sets disjoint from a subgroup }

\author{ Courtney Hoagland, Stephen P.  Humphries, Seth Poulsen}
  \address{Department of Mathematics,  Brigham Young University, Provo, 
UT 84602, U.S.A.
E-mail: courtneyh24601@gmail.com, steve@mathematics.byu.edu, poulsenseth@yahoo.com}
\date{}
\maketitle

\begin{abstract}  
We study finite groups $G$ having a subgroup $H$ and $D \subset G \setminus H$ such that the multiset $\{ xy^{-1}:x,y \in D\}$ has every non-identity element occur the same number of times (such a $D$ is called a {\it difference set}). We show that $H$ has to be normal, that $|G|=|H|^2$, and that $|D \cap Hg|=|H|/2$  for all $g \notin H$. We show that $H$ is contained in every normal subgroup of prime index, and other properties. We give a $2$-parameter family of examples of such groups.  We show that such groups have Schur rings with four principal sets. 
\medskip

\noindent {\bf Keywords}: Difference set, subgroup, skew Hadamard difference set, Schur ring. \newline 
\medskip
\subjclass[2010]{Primary 05B10. Secondary: 20C05.}
\end{abstract}

\theoremstyle{plain}

\theoremstyle{definition}
\newtheorem*{dfn}{Definition}
\newtheorem{exa}[thm]{Example}
\newtheorem{rem}[thm]{Remark}

\newcommand{\ds}{\displaystyle}
\newcommand{\bs}{\boldsymbol}
\newcommand{\mb}{\mathbb}
\newcommand{\mc}{\mathcal}
\newcommand{\mf}{\mathfrak}
\renewcommand{\mod}{\operatorname{mod}}
\newcommand{\mult}{\operatorname{Mult}}

\def \a{\alpha} \def \b{\beta} \def \d{\delta} \def \e{\varepsilon} \def \g{\gamma} \def \k{\kappa} \def \l{\lambda} \def \s{\sigma} \def \t{\theta} \def \z{\zeta}

\numberwithin{equation}{section}

\setlength{\leftmargini}{1.em} \setlength{\leftmarginii}{1.em}
\renewcommand{\labelenumi}{\setlength{\labelwidth}{\leftmargin}
   \addtolength{\labelwidth}{-\labelsep}
   \hbox to \labelwidth{\theenumi.\hfill}}

\maketitle

 \section{Introduction}
 
 For a group $G$ we will identify a finite subset $X \subseteq G$ with the element $\sum_{x \in X} x \in \mathbb Q G$ of the group algebra. We also let $X^{-1}=\{x^{-1}:x \in X\}$.  Also, write $\mathcal C_n$ for the cyclic group of order $n$.

 A $(v,k,\lambda)$ {\it difference set} is a subset $D \subset G, |D|=k$, where $G$ is a  finite group such that every element $1 \ne g \in G$ occurs $\lambda $ times in the multiset $\{xy^{-1}:x,y \in D\}$. Further, $|G|=v$.
 
 It is well-known that if $D \subset G$ is a difference set, then $gD=\{gd:d \in D\}$ and $\alpha(D)$ are also difference sets, for any $g \in G, \alpha \in \mathrm{Aut}(G)$. 
 Thus in some sense, difference sets are spread out evenly over the group $G$. In this paper we seek to restrict the types of difference sets considered by imposing the following conditions:

We assume that  $D \subset G$ is a $(v,k,\lambda)$ difference set  
where is a subgroup $1\ne H \le G$ and $m \ge 0$ such that

\noindent (1) $D \cap D^{-1}=Hg_1\cup \dots \cup Hg_m$;

\noindent (2) $G \setminus (D \cup D^{-1})=H \cup Hg_1'\cup \dots \cup Hg_m'$.

Here $H, Hg_1,\dots, Hg_m, Hg_1',\dots, Hg_m'$ are distinct cosets of $H$.
Let $$ h=|H|,\quad u=|G:H|.$$ Then we have $h>1$. 
A group having a difference set of the above type will be called a $(v,k,\lambda)_m$ {\it relative skew Hadamard difference set group} (with difference set $D$ and subgroup $H$).

Recall that a group $G$ has a {\it skew Hadamard difference set} if it has a difference set $D$ where $G=D\cup D^{-1} \cup \{1\}$ and $D \cap D^{-1}=\emptyset$. Such groups  have been studied in \cite {cf,co,dy,dw,ev,fx,im,cg}.

Our first result is
\begin{thm} \label{thm1}  Let $G$ be a $(v,k,\lambda)_m$  relative skew Hadamard difference set group with subgroup $H$ and difference set $D$. Then 

\noindent (i) $ h=u$ is even, $v=|G|=h^2$, and  
$$\lambda=\frac 1 4  {h(h-2)},\,\,\,  k=\frac 1 2 h(h-1).$$

Now assume that $m=0$. Then we have:

\noindent (ii) $H\triangleleft G$;

\noindent (iii) each non-trivial coset  $Hg\ne H$ meets $D$ in $h/2$ points;

\noindent (iv) $H$ contains the subgroup generated by all the involutions in $G$;

\noindent (v)  the subgroup $H \le G$ does not have a complement.
\end{thm}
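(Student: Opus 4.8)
The plan is to squeeze everything out of the basic group-ring identity $DD^{-1}=(k-\lambda)+\lambda G$ together with the coset decomposition forced by (1)--(2). First I would split $G$ into the four $H$-stable pieces $G=A\sqcup E\sqcup E^{-1}\sqcup B$, where $A=D\cap D^{-1}$ is the union of the $m$ cosets in (1), $B=G\setminus(D\cup D^{-1})$ is the union of the $m+1$ cosets in (2), and $E=D\setminus D^{-1}$. Since $A,B$ are unions of $H$-cosets and $|E|=|E^{-1}|$, counting gives $|E|=\tfrac12 h(u-2m-1)$ and hence $k=|A|+|E|=\tfrac12 h(u-1)$, i.e. $2k=v-h$, for every $m$. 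Feeding this into $k^2=k+\lambda(v-1)$ solves for the parameters and yields $n:=k-\lambda=\dfrac{h^2(u^2-1)}{4(uh-1)}$.

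Two opposite estimates then pin down $u$. For the upper bound, integrality of $n$ is key: since $(hu)^2\equiv1\pmod{uh-1}$ one computes $h^2(u^2-1)\equiv-(h^2-1)$, and as $\gcd(uh-1,h)=1$ this forces $(uh-1)\mid(h^2-1)$, whence $uh-1\le h^2-1$ and $u\le h$. For the lower bound I would argue by positivity in the representation algebra; I illustrate it for $m=0$ (the general case is analogous, using the eigenvalues of $\rho(A-H-B_0)$). For every nontrivial irreducible $\rho$ one has $\rho(DD^{-1})=nI$, so $\rho(D)=\sqrt n\,U$ with $U$ unitary and $\rho(D)^*\rho(D)=nI$; choosing $\rho$ occurring in $\operatorname{Ind}_H^G 1$ (possible since $u>1$) and a unit vector $w$ fixed by $\rho(H)$, the relation $\rho(D)+\rho(D)^*=-\rho(H)$ gives $\operatorname{Re}\langle\rho(D)w,w\rangle=-h/2$, so $\sqrt n=\|\rho(D)w\|\ge h/2$ and $u\ge h$. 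Hence $u=h$, and then $n=h^2/4$ must be an integer, forcing $h$ even; the stated values $v=h^2$, $k=\tfrac12 h(h-1)$, $\lambda=\tfrac14 h(h-2)$ follow.

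From here assume $m=0$, so $D+D^{-1}=G-H$ and $n=h^2/4$. The engine for the remaining parts is the description, for each nontrivial irreducible $\rho$, of $\rho(D)$: from $\rho(D)\rho(D)^*=\tfrac{h^2}4I$ and $\rho(D)+\rho(D)^*=-\rho(H)=-hP_H$ (with $P_H$ the projection onto the $H$-fixed space) one gets $\operatorname{Re}(U)=-P_H$, so $\rho(D)$ acts as the scalar $-h/2$ on $V_\rho^{H}$ and with eigenvalues $\pm ih/2$ on its complement. Checking that both sides have the same image under every irreducible representation then gives the exact identity $HD=\tfrac h2(G-H)$, and reading off the coefficient at $g\notin H$ yields $|D\cap Hg|=h/2$, which is (iii). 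Part (iv) is elementary and independent: an involution $t=t^{-1}$ cannot lie in $D$ (else $t\in D\cap D^{-1}=\emptyset$) nor in $D^{-1}$, so $t\in H$; as $H$ is a subgroup it contains the subgroup they generate. Part (v) is then immediate: a complement $K$ would have order $u=h$, which is even, so by Cauchy it contains an involution $t$; but $t\in H$ by (iv), forcing $t\in H\cap K=\{1\}$, a contradiction.

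The remaining and hardest point is normality (ii). I would reformulate it as the claim that $D$ meets no conjugate $H^x=x^{-1}Hx$: applying $D+D^{-1}=G-H$ inside the permutation module $\mathbb{C}[H\backslash G]$ gives $\pi(D)+\pi(D)^{\top}=hJ-\pi(H)$, and comparing diagonals yields the clean formula $|D\cap H^x|=\tfrac12\big(h-|H\cap H^x|\big)$; thus $D\cap H^x=\emptyset$ iff $H^x=H$, and $D$ avoiding all conjugates is exactly $H\trianglelefteq G$. The obstacle is that the natural invariants ($\operatorname{tr}\pi(D)=\tfrac h2(h-\delta)$ with $\delta=|H\backslash G/H|$, together with the second moments $\sum_C|D\cap C|^2=n+\lambda h$) are all consistent with, but do not by themselves force, $\delta=h$. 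To break this circularity I would push the scalar action of $\rho(D)$ on fixed spaces harder, aiming to show that the element $T^2=hH-h^2$ (where $T=D-D^{-1}$, which commutes with $D$, $D^{-1}$ and $H$, so lies in the four-dimensional Schur ring they span) is genuinely central in $\mathbb{Q}G$, equivalently that $H$ is; alternatively, one can try to exploit the integrality of $|D\cap H^x|$ together with the fact (from (iv)) that every involution lies in every $H^x$, constraining the even-order subgroups $H\cap H^x$. I expect this normality step to be the crux of the whole theorem.
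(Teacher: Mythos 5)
The decisive gap is part (ii). Everything you actually establish --- the identity $HD=DH=\frac h2(G-H)$, the diagonal formula $|D\cap H^x|=\frac12\bigl(h-|H\cap H^x|\bigr)$, the trace and second-moment invariants --- is, as you yourself observe, consistent with $H$ failing to be normal, and none of your proposed continuations is carried out (centrality of $T^2=hH-h^2$ is literally equivalent to what is to be shown, so that route is circular as stated). This is exactly the step the paper does carry out, by a mechanism absent from your proposal: order $G$ by the right cosets of $H$ and work in the regular representation, so that $H=\mathrm{diag}(J_h,\dots,J_h)$; prove (their Proposition 3.3) that $H\triangleleft G$ if and only if, for every $g\in G$, each $h\times h$ block of $g$ is either zero or a permutation matrix (equivalently, commutes with $J_h$); then use the block consequence of $DH=HD=\frac h2(G-H)$ --- every nonzero block of $D$ has exactly $h/2$ ones in each row and each column --- to argue that each $g\in D$ has all blocks zero or permutation matrices, i.e.\ $gH=Hg$; since $G=H\cup D\cup D^{-1}$ and the $D^{-1}$ case follows by inverting, normality follows. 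Without this step or a substitute for it, the central assertion of the theorem is unproved.

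There is a second, repairable gap in (i): the statement is for all $m$, but your lower bound $u\ge h$ is proved only for $m=0$. For $m>0$ the relation becomes $\rho(D)+\rho(D)^*=\rho(A)-\rho(H)-\rho(B_0)$, and at an $H$-fixed unit vector $w$ the extra term $\langle(\rho(A)-\rho(B_0))w,w\rangle$ is a real number that can a priori lie anywhere in $[-2mh,2mh]$, so the equation $\operatorname{Re}\langle\rho(D)w,w\rangle=-h/2$ is lost and no bound $\sqrt n\ge h/2$ follows; ``the general case is analogous'' is not justified. The fix is inside your own arithmetic: since $\gcd(uh-1,h)=1$, integrality of $n=\frac{h^2(u^2-1)}{4(uh-1)}$ gives not only $(uh-1)\mid(h^2-1)$ (your bound $u\le h$) but also $(uh-1)\mid(u^2-1)$, whence $h\le u$; so $u=h$ for every $m$ with no representation theory at all. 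This is in substance the paper's Lemma 2.1, which treats all $m$ at once by a longer gcd case analysis. The rest of your proposal is correct: (iv) and (v) coincide with the paper's arguments, and your route to (iii) --- unitarity of $\frac2h\rho(D)$ plus $\rho(D)+\rho(D)^*=-hP_H$ forcing $\rho(D)=-\frac h2$ on $H$-fixed vectors, hence $HD=\frac h2(G-H)$ in $\mathbb{C}G$ by checking all irreducibles --- is a clean, basis-free alternative to the paper's explicit eigenvector computations in the regular representation.
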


Let $\Phi(G)$ be the Frattini subgroup of $G$, the intersection of all the maximal subgroups of $G$.
We also have the following result which concerns maximal subgroups of $G$:
  \begin{thm} \label {thmfr}      
  Let $G$ be a group that is a $(v,k,\lambda)_0$  relative skew Hadamard difference set group with subgroup $H$ and difference set $D$.  Then
  
   (a) 
  (i)  every index $2$ subgroup of $G$ contains $H$ and $D$ meets each such  subgroup in exactly $\lambda$ points.   
  
  (ii) if  $N \triangleleft G$ has odd prime index $p$, then $H \le N$. In this situation each non-trivial coset of $N$ meets $D$ in $\frac {1}{2p} h^2$ elements, while $|N \cap D|=\frac {1}{2p} h(h-p)$.

\noindent (b)  Now assume that $G$ is also a $2$-group.
  Then $H \le \Phi(G)$.
  Further, $D$ meets each maximal subgroup of $G$  in exactly $\lambda$ points.
  \end{thm}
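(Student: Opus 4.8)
The basic tool throughout is the group-ring identity $DD^{-1} = n\cdot 1 + \lambda G$ in $\mathbb{Q}G$, where by Theorem \ref{thm1}(i) the order is $n = k-\lambda = h^2/4$. Since $m=0$, conditions (1)--(2) say that $G$ is the disjoint union $G = H \sqcup D \sqcup D^{-1}$, so for any linear character $\chi$ of $G$ we have $\chi(G) = \chi(H) + \chi(D) + \chi(D^{-1})$, and since $\chi(d^{-1}) = \overline{\chi(d)}$ we get $\chi(D^{-1}) = \overline{\chi(D)}$. The plan is to prove everything by feeding linear characters of the abelian quotients $G/N$ into the difference-set identity; a nontrivial linear $\chi$ kills $G$, so applying $\chi$ to $DD^{-1} = n + \lambda G$ yields $|\chi(D)|^2 = \chi(D)\overline{\chi(D)} = n = h^2/4$, i.e. $|\chi(D)| = h/2$.

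For part (a)(i) I would take $N$ of index $2$ with corresponding character $\chi\colon G \to \{\pm 1\}$, so $\chi(D)$ is real and $\chi(D)^2 = h^2/4$ gives $\chi(D) = \pm h/2$. From $\chi(G)=0$ and the decomposition $G = H \sqcup D \sqcup D^{-1}$ one gets $\chi(H) = -2\chi(D) = \mp h$. But $\chi|_H$ is a character of $H$ into $\{\pm1\}$, so $\chi(H) \in \{0,h\}$ (equal to $h$ if $\chi|_H$ is trivial and $0$ otherwise, since $1 \in H$ always contributes $+1$). The value $-h$ is excluded, forcing $\chi(D) = -h/2$ and $\chi(H) = h$; the latter says $\chi|_H$ is trivial, i.e. $H \le N$, and then $|D\cap N| = \tfrac12\bigl(k + \chi(D)\bigr) = \lambda$. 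Thus containment and the exact count fall out together.

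For (a)(ii), let $N \triangleleft G$ have odd prime index $p$ and let $\chi$ be a faithful character of $G/N \cong \mathcal C_p$, valued in $p$th roots of unity, so $\chi(D) \in \mathbb{Z}[\zeta_p]$ with $|\chi(D)| = h/2$. Since $H \triangleleft G$ and $|G:N|=p$ is prime, either $H \le N$ or $HN = G$. Suppose $HN = G$; then $\chi|_H$ surjects onto the group of $p$th roots of unity, its values equidistribute, and $\chi(H) = \tfrac{h}{p}\sum_{j=0}^{p-1}\zeta_p^{\,j} = 0$. Combined with $\chi(H) = -2\,\mathrm{Re}\,\chi(D)$ (from $\chi(G)=0$), this forces $\mathrm{Re}\,\chi(D) = 0$, whence $\chi(D) = \pm i\,h/2$. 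This is the crux of the whole theorem: $\pm i\,h/2$ is purely imaginary of nonzero rational modulus, so membership in $\mathbb{Q}(\zeta_p)$ would require $i \in \mathbb{Q}(\zeta_p)$, which fails for odd $p$ (equivalently, $\mathbb{Q}(\zeta_p)$ is unramified at $2$); this contradicts $\chi(D) \in \mathbb{Z}[\zeta_p]$, so $H \le N$. The two counts then follow from Theorem \ref{thm1}(iii): each nontrivial $N$-coset is a union of $h/p$ nontrivial $H$-cosets, each meeting $D$ in $h/2$ points, giving $\tfrac{1}{2p}h^2$; and $N$ itself is $H$ together with $h/p - 1$ nontrivial $H$-cosets, giving $|N\cap D| = (h/p - 1)(h/2) = \tfrac{1}{2p}h(h-p)$.

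Finally, part (b) should be immediate: in a $2$-group every maximal subgroup is normal of index $2$, so by (a)(i) each maximal subgroup contains $H$ and meets $D$ in exactly $\lambda$ points, and intersecting over all of them gives $H \le \Phi(G)$. The only genuinely delicate step is the cyclotomic non-membership argument in (a)(ii) — ruling out a purely imaginary value of $\chi(D)$ of modulus $h/2$; by contrast the index-$2$ sign determination and the $2$-group statement come along essentially for free once (a)(i) is in hand.
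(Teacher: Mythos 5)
Your proposal is correct, and in the decisive step it takes a genuinely different route from the paper. For (a)(i) the two arguments are close in spirit: the paper pushes $D^2=\lambda G+\frac{h}{2}H-(k-\lambda)\cdot 1$ and $D+D^{-1}=G-H$ into the group algebra of $G/M\cong\mathcal C_2$ and solves the integer system (4.1)--(4.3), while you evaluate the single identity $DD^{-1}=(k-\lambda)\cdot 1+\lambda G$ at the sign character and use orthogonality (so $\chi(H)\in\{0,h\}$) to pin down the sign; this is the same computation in lighter clothing. The real divergence is in (a)(ii). The paper diagonalizes $\pi(H)$ and $\pi(D)$ by the DFT matrix, shows every eigenvalue of $\tilde H$ lies in $\{0,h\}$, uses integrality of the $y_i$ to force all nontrivial eigenvalues to agree, and is then left to exclude the case $y_0=\cdots=y_{p-1}=h/p$ (exactly your case $HN=G$), which it does by an integrality count: it reads off, ``taking the rational part'' of the $(2,2)$ entry of $\tilde D\tilde D^{-1}$, the equation $\sum_i x_i^2=k-\lambda$, contradicting $\sum_i x_i^2\ge\sum_i x_i=k$. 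You exclude the same case field-theoretically: $\chi(H)=0$ forces $\chi(D)=\pm ih/2$, which is impossible because $\chi(D)\in\mathbb Z[\zeta_p]$ while $i\notin\mathbb Q(\zeta_p)$ for odd $p$. Your version buys robustness here: the paper's extraction of $\sum_i x_i^2=k-\lambda$ is delicate, since $1,\zeta_p,\dots,\zeta_p^{p-1}$ are linearly dependent over $\mathbb Q$; writing the $(2,2)$ entry as $\sum_{d}c_d\zeta_p^{d}$ with $c_d=\sum_i x_ix_{i+d}$, the relation $\sum_d c_d\zeta_p^{d}=k-\lambda$ only yields $c_0-c_{p-1}=k-\lambda$ together with $c_1=\cdots=c_{p-1}$, so the paper's equality (5.6) requires a further argument that your $i\notin\mathbb Q(\zeta_p)$ obstruction simply does not need. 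Your coset-counting derivation of $|D\cap Ng|=\frac{1}{2p}h^2$ and $|N\cap D|=\frac{1}{2p}h(h-p)$ directly from Theorem \ref{thm1}(iii) is likewise shorter than the paper's inversion of the DFT matrix, and your part (b) coincides with the paper's.
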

  

Our original motivation for studying  $(v,k,\lambda)_0$   relative skew Hadamard difference set groups was to produce examples of Schur rings with a small number of principal sets.

We now define Schur rings \cite {mp2,sch,wie,w1}.

A subring $\mathfrak S$ of the group algebra $\mathbb C G$ is called a \emph{Schur ring} (or S-ring) if there is  a partition  $\mathcal{K}=\{C_i\}_{i=1}^r$ of $G$ such that the following hold:
\begin{enumerate}
\item $\{1_G\}\in \mathcal{K}$;
\item for each $C\in \mathcal{K}$, $C^{-1}\in\mathcal{K}$;
\item ${C_i}\cdot  {C_j}=\sum_k \lambda_{i,j,k}  {C_k}$; for all $i,j \le r$.
\end{enumerate}

The $C_i$ are called the {\it principal sets} of $\mathfrak S$. Then we have:

  \begin{thm} \label{thmsring}
 Let $G$ be a  $(v,k,\lambda)_0$  relative skew Hadamard difference set group with 
  difference set $D$  and subgroup $H$. Then
$$\{1\}, \,\, H \setminus \{1\},\,\,D,\,\, D^{-1},$$ are the principal sets of a commutative  Schur-ring over $G$.\end{thm}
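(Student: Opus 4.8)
The plan is to verify the three S-ring axioms for the partition $\mathcal K=\{\{1\},\,H\setminus\{1\},\,D,\,D^{-1}\}$, working throughout in the group algebra $\mathbb QG$ and identifying each part with the sum of its elements. Since $m=0$, conditions (1) and (2) say exactly that $D\cap D^{-1}=\emptyset$ and $G=H\sqcup D\sqcup D^{-1}$, so the four sets above do partition $G$; in $\mathbb QG$ this reads $G=H+D+D^{-1}$ with $H=1+(H\setminus\{1\})$. Axiom (1) is immediate, and axiom (2) holds because $(H\setminus\{1\})^{-1}=H\setminus\{1\}$ (as $H$ is a subgroup) while $D$ and $D^{-1}$ are interchanged. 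Thus everything reduces to axiom (3): I must show that the $\mathbb Q$-span $\mathfrak S$ of the four parts is closed under multiplication. Once closure is known, the structure constants are automatically non-negative integers, since each is the coefficient in $\mathfrak S\subseteq\mathbb QG$ of a single group element.

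First I would record the elementary identities. For the subgroup part, $H^2=hH$, whence $(H\setminus\{1\})^2=(H-1)^2=(h-1)\cdot 1+(h-2)(H\setminus\{1\})$. For products with $H$, Theorem \ref{thm1}(ii),(iii) give that every non-trivial coset of the normal subgroup $H$ meets $D$ in $h/2$ points (and $D\cap H=\emptyset$), so counting coefficients coset by coset yields
\[
HD=DH=HD^{-1}=D^{-1}H=\tfrac h2\,(D+D^{-1}),
\]
from which $(H\setminus\{1\})D=(\tfrac h2-1)D+\tfrac h2 D^{-1}$, and similarly for $D^{-1}$. Finally, the difference-set equation $DD^{-1}=(k-\lambda)\cdot 1+\lambda G$, rewritten via $G=1+(H\setminus\{1\})+D+D^{-1}$, already lies in $\mathfrak S$.

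The crux is the product $D^2$ (and, symmetrically, $(D^{-1})^2$), where I have no direct combinatorial control over $\{d_1d_2:d_1,d_2\in D\}$. Here I would avoid a direct count and instead use the regularity relation $DG=kG$ (valid because $dG=G$ for each $d\in D$). Expanding $DG=DH+D^2+DD^{-1}$ and solving gives $D^2=kG-DH-DD^{-1}$. Substituting the formulas above and using the numerical identity $k-\lambda=\tfrac14 h^2$ from Theorem \ref{thm1}(i), the $1$-terms cancel and one is left with
\[
D^2=\tfrac14 h^2\,(H\setminus\{1\})+\tfrac14 h(h-2)\,(D+D^{-1}),
\]
a non-negative integer combination of the parts (integrality and non-negativity using that $h$ is even and $h\ge 2$). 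The companion products $(D^{-1})^2$ and $D^{-1}D$ then follow either by the identical elimination applied to $D^{-1}G=kG$, or at once by applying the inversion anti-automorphism $x\mapsto x^{-1}$ of $\mathbb QG$, which fixes $1$ and $H\setminus\{1\}$, swaps $D\leftrightarrow D^{-1}$, and hence preserves $\mathfrak S$.

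This exhausts all products of parts, so $\mathfrak S$ is a Schur ring. Commutativity is then read off from the computed products: each is symmetric in its two factors (explicitly $HD=DH$, $HD^{-1}=D^{-1}H$, and $DD^{-1}=D^{-1}D=k\cdot 1+\lambda[(H\setminus\{1\})+D+D^{-1}]$), so $C_iC_j=C_jC_i$ for all $i,j$. I expect the main obstacle to be precisely the $D^2$ computation; the key idea that makes it routine is trading the intractable direct count for the linear relation $DG=kG$ together with the decomposition $G=1+(H\setminus\{1\})+D+D^{-1}$.
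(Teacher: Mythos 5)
Your proposal is correct and takes essentially the same route as the paper: the paper's equation (3.2), namely $(k-\lambda)(G-1)=D^2+DH$, is obtained by exactly your elimination (substituting $D^{-1}=G-H-D$ into $DD^{-1}=\lambda G+(k-\lambda)1$ and using $DG=kG$), and the paper likewise combines this with $DH=HD=\frac h2(G-H)$ (its equation (3.8), coming from the $h/2$ coset counts) to express all products in the span of $1$, $H-1$, $D$, $D^{-1}$ with non-negative integer coefficients. The only cosmetic difference is that the paper invokes its Lemma \ref{lemdiag} (commuting normal matrices) for commutativity, whereas you read commutativity off the explicit product formulas.
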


Each Schur ring is also an association scheme over $G$.
The $P$-matrix (where the columns correspond to the generators $1,H-1,D,D^{-1}$ and rows correspond to representations/eigenvalues of the Schur ring) of the  association scheme corresponding to the above Schur ring is 
$$\begin{pmatrix}
1 &  h - 1 &  \frac 1 2  h(h-1)  & \frac 1 2  h(h-1)\\
1  & h - 1  & -\frac 1 2  h   & -\frac 1 2  h \\
1  & -1  & -\frac 1 2  ih   & \frac 1 2  ih \\
1 &  -1 &  \frac 1 2  ih   & -\frac 1 2  ih 
\end{pmatrix}
$$
 Here $i^2=-1$. 
 Theorem \ref {thmsring} allows us to show
   \begin{thm} \label{thmminpoly}
 Let $G$ be a  $(v,k,\lambda)_0$   relative skew Hadamard group with 
  difference set $D$  and subgroup $H$. Then
the minimal polynomial for $D$ is
$$ \mu(D)=(x-k)\left (x+\frac h 2\right  )\left (x^2+\frac {h^2} 4\right ).
$$
Further, the eigenvalues $k, -h/2,ih/2,-ih/2$ have multiplicities
$$1,\,\,\,h-1,\,\,\, h(h-1)/2,\,\,, h(h-1)/2  $$ (respectively).
\end{thm}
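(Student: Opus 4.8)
The plan is to read everything off from the association scheme furnished by Theorem \ref{thmsring}. Since $\{1\},\, H\setminus\{1\},\, D,\, D^{-1}$ are the principal sets of a commutative Schur ring $\mathfrak S\subseteq \mathbb C G$, and $\mathfrak S$ is a subalgebra of the semisimple algebra $\mathbb C G$ (semisimple by Maschke's theorem), the element $D$ is semisimple in $\mathfrak S$: its image under any representation is diagonalizable, and the distinct eigenvalues of $D$ are exactly the distinct entries appearing in the column of the $P$-matrix indexed by $D$.

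First I would read off that column. Its four entries are $\tfrac12 h(h-1)=k$, $-\tfrac h2$, $-\tfrac{ih}2$ and $\tfrac{ih}2$. For $h>1$ these four complex numbers are pairwise distinct, so the minimal polynomial of $D$ is the product of the corresponding distinct linear factors:
$$\mu(D)=(x-k)\left(x+\tfrac h2\right)\left(x+\tfrac{ih}2\right)\left(x-\tfrac{ih}2\right)=(x-k)\left(x+\tfrac h2\right)\left(x^2+\tfrac{h^2}4\right),$$
which is precisely the claimed factorization.

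It remains to compute the multiplicities $m_0,m_1,m_2,m_3$ of these eigenvalues, i.e.\ the dimensions of the four common eigenspaces of the scheme. Here I would invoke the standard first orthogonality relation for the eigenmatrix $P$ of a commutative association scheme on $n=|G|=h^2$ points, namely $m_j = n\big/\sum_i |P_{ji}|^2/k_i$, where the valencies are the sizes of the principal sets $k_0=1$, $k_1=h-1$, $k_2=k_3=\tfrac12 h(h-1)$. Substituting each row of the $P$-matrix and simplifying (each sum collapses to a single fraction with denominator $h-1$), one finds that $\sum_i |P_{ji}|^2/k_i$ equals $h^2,\ \tfrac{h^2}{h-1},\ \tfrac{2h}{h-1},\ \tfrac{2h}{h-1}$ for $j=0,1,2,3$, giving multiplicities $1,\ h-1,\ \tfrac12 h(h-1),\ \tfrac12 h(h-1)$. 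As a consistency check these sum to $h^2=n$, as they must.

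The computation is essentially forced once Theorem \ref{thmsring} and its $P$-matrix are granted, so there is no serious obstacle. The one point deserving genuine care is the justification that the minimal polynomial is the product over the \emph{distinct} eigenvalues (rather than some higher power): this is exactly the assertion that $D$ is semisimple, and follows from the semisimplicity of $\mathbb C G$. The only other place to be careful is the arithmetic in the orthogonality sums, where the terms $|{-ih/2}|^2=h^2/4$ must be treated as genuine squared moduli rather than squares.
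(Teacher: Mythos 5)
Your argument is circular at its core. The $P$-matrix you ``read off'' is not part of Theorem \ref{thmsring}, and it is not established anywhere in the paper before Theorem \ref{thmminpoly}: Theorem \ref{thmsring} asserts only that $\{1\}, H\setminus\{1\}, D, D^{-1}$ are the principal sets of a commutative Schur ring, while the displayed $P$-matrix is an unproven assertion whose rows record precisely the eigenvalues of $1, H-1, D, D^{-1}$ on the common eigenspaces --- that is, exactly the content of the theorem you are trying to prove. To make your scheme-theoretic route non-circular you would have to compute the four characters of the $4$-dimensional commutative algebra from its structure constants ($HD=\frac h2(G-H)$, $D^2=(k-\lambda-\frac h2)(D+D^{-1})+(k-\lambda)(H-1)$, $DD^{-1}=\lambda D+\lambda D^{-1}+\lambda(H-1)+k\cdot 1$), and that computation is what the paper's proof actually does: it derives the annihilating polynomial $(x-k)\left(x+\frac h2\right)\left(x^2+\frac{h^2}4\right)$ by elimination in the ideal generated by those relations, exhibits $k$ as an eigenvalue via row sums, exhibits $-h/2$ explicitly via $D\cdot(H-Hg)=-\frac h2(H-Hg)$ for $g\notin H$, pairs $\pm ih/2$ using that $D$ is a real matrix, and then uses $\mathrm{Trace}(D)=0$ together with the Perron--Frobenius theorem (applied to the irreducible doubly stochastic matrix $\frac 1k D$, Lemma \ref{lemdstoc}) to rule out the possibility $\mu(D)=(x-k)(x+h/2)$ and to pin down the multiplicities $1,\,h-1,\,h(h-1)/2,\,h(h-1)/2$. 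Your orthogonality computation of the multiplicities is arithmetically correct and would be a clean alternative to the paper's trace argument, but only after the eigenvalues --- i.e.\ the $P$-matrix --- have been honestly derived.

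There is also a local error in your justification of diagonalizability. Membership in a semisimple algebra does not make an element semisimple (a nilpotent matrix is an element of the semisimple algebra $M_n(\mathbb C)$), and a subalgebra of a semisimple algebra need not be semisimple (upper triangular matrices inside $M_n(\mathbb C)$). What actually makes $D$ diagonalizable is normality: in the regular representation $D^{*}=D^{T}=D^{-1}$, which lies in the commutative algebra $\mathfrak S$, so $DD^{*}=D^{*}D$. This is exactly the paper's Lemma \ref{lemdiag}, and it --- not Maschke's theorem --- is what licenses the claim that $\mu(D)$ is a product of distinct linear factors.
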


One can say something about the image of $D$ under an irreducible representation of $G$:

\begin{thm}  \label {thmrep} Let $G$ be a  $(v,k,\lambda)_0$  difference set group with 
  difference set $D$  and subgroup $H$.
Let $\rho$ be a non-principal irreducible representation of $G$ of degree $d$. Then $\rho(G)=0I_d, \rho(D^{-1})=\rho(D)^*$ and we have one of the following:

\noindent (i) $\rho(H)=0I_d$ and  $\rho(D)=\mathrm{diag}\left (\varepsilon_1 i\frac h 2,\varepsilon_2  i\frac h 2,\dots,\varepsilon_d  i\frac h 2\right ),$
for some  $\varepsilon_i \in \{-1,1\}$; 

\noindent (ii)  $\rho(H)=hI_d$ and  $\rho(D)=-\frac h 2 I_d$.
\end{thm}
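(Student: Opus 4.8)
The plan is to transport the two identities that define the situation—the difference-set equation and the disjoint decomposition $G=H\sqcup D\sqcup D^{-1}$ (available since $m=0$)—through an arbitrary non-principal irreducible representation $\rho$, taken to be unitary, and then to read off the two cases from linear algebra. Write $A=\rho(D)$. First I would dispatch the two preliminary assertions. Since $\rho$ is non-principal, $M=\rho(G)=\sum_{g\in G}\rho(g)$ satisfies $\rho(x)M=M$ for every $x\in G$, so the columns of $M$ lie in the $G$-fixed subspace; irreducibility (and non-triviality) makes that subspace $0$, hence $\rho(G)=0I_d$. Unitarity gives $\rho(x^{-1})=\rho(x)^{*}$, so $\rho(D^{-1})=A^{*}$ at once.

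Next I would feed the difference-set equation $DD^{-1}=(k-\lambda)1+\lambda G$ in $\mathbb QG$ into $\rho$. Because $\rho(G)=0$, this collapses to $AA^{*}=(k-\lambda)I_d$, and the values in Theorem~\ref{thm1}(i) give $k-\lambda=\tfrac14 h^2$, so $AA^{*}=\tfrac{h^2}{4}I_d$. Applying $\rho$ to the group-algebra identity $G=H+D+D^{-1}$ and again using $\rho(G)=0$ yields $\rho(H)=-(A+A^{*})$, a Hermitian matrix.

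The heart of the argument is then a dichotomy on the space $V^{H}$ of $H$-fixed vectors, and here I would use that $H\triangleleft G$ (Theorem~\ref{thm1}(ii)): for $x\in H$, $g\in G$ and $v\in V^{H}$ one has $\rho(x)\rho(g)v=\rho(g)\rho(g^{-1}xg)v=\rho(g)v$, so $V^{H}$ is $G$-invariant, and irreducibility forces $V^{H}=0$ or $V^{H}=V$. If $V^{H}=0$ then $\rho(H)=0I_d$, so $A+A^{*}=0$; combined with $AA^{*}=\tfrac{h^2}{4}I_d$ this gives $A^{2}=-\tfrac{h^2}{4}I_d$ with $A$ skew-Hermitian, hence normal. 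A normal matrix is unitarily diagonalizable, and its eigenvalues must square to $-h^2/4$, i.e. equal $\pm i\,h/2$; choosing a diagonalizing orthonormal basis yields conclusion (i). If instead $V^{H}=V$ then $\rho(x)=I_d$ for all $x\in H$, so $\rho(H)=hI_d$ and $A^{*}=-hI_d-A$. Substituting into $AA^{*}=\tfrac{h^2}{4}I_d$ gives $(A+\tfrac h2 I_d)^{2}=0$, while the same substitution shows $A^{*}A=AA^{*}$, so $A$ is normal; a normal nilpotent matrix vanishes, whence $A=-\tfrac h2 I_d$, which is conclusion (ii).

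The step I expect to be the genuine obstacle is the collapse of $\rho(H)$ to one of the two scalars $0$ or $hI_d$: everything after that is forced linear algebra about a matrix satisfying $AA^{*}=\tfrac{h^2}{4}I_d$ and $A+A^{*}=-\rho(H)$. In general $\tfrac1h\rho(H)$ is only a Hermitian projection $P$ onto $V^{H}$, and an arbitrary value of $P$ would wreck the clean two-case description; it is precisely the normality of $H$ that makes $V^{H}$ a $G$-submodule and thereby pins $P$ to $0$ or $I_d$. I would therefore present that $G$-invariance computation carefully, and treat the two resulting matrix problems as the routine—but essential—verifications they are.
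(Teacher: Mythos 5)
Your proposal is correct, but it reaches the conclusion by a leaner route than the paper. The paper's proof is essentially a corollary of machinery it has already built: it invokes Lemma \ref{lemdiag} (that $D,D^{-1},H,G$ are commuting normal matrices, hence simultaneously diagonalizable), gets $\rho(H)=h_0 I_d$ from Schur's lemma applied to the central group-algebra element $H$ (central because $H\triangleleft G$), and then reads off the two cases from the identities $D^2=\lambda G+\tfrac h2 H-(k-\lambda)$, $DD^{-1}=\lambda G+(k-\lambda)$ and $HD=\tfrac h2(G-H)$ --- note that the last of these is equation (3.8), i.e.\ it silently imports Theorem \ref{thm1}(iii) on coset intersections. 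You instead use only the partition $G=H+D+D^{-1}$, the difference-set equation, normality of $H$, and $k-\lambda=h^2/4$: the dichotomy $\rho(H)\in\{0,hI_d\}$ comes from the $H$-fixed subspace $V^H$ being a $G$-submodule (your conjugation computation is the right way to see this, and $\tfrac1h\rho(H)$ is indeed the Hermitian projection onto $V^H$), after which case (i) is the statement that a skew-Hermitian $A$ with $AA^*=\tfrac{h^2}4 I_d$ has spectrum $\{\pm ih/2\}$, and case (ii) reduces to a normal matrix satisfying $(A+\tfrac h2 I_d)^2=0$, which must vanish. What the paper's approach buys is brevity inside its own ecosystem --- three substitutions into pre-established identities; what yours buys is independence from that ecosystem: you never need (3.8)/(10.3), (10.1), or the simultaneous-diagonalization lemma, so the representation-theoretic statement is seen to follow from Theorem \ref{thm1}(i)--(ii) alone. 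One small point common to both arguments: conclusion (i) as stated exhibits $\rho(D)$ as a diagonal matrix, which in your proof (as in the paper's) holds after a unitary change of basis; it is worth saying this explicitly when writing it up.
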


  We next  give examples of families of $(v,k,\lambda)_0$ relative skew Hadamard difference set groups.
Let $n \ge 2, 0 \le k<n-1$ and define the following bi-infinite family of groups:
\begin{align*}
\mathfrak G_{n,k}=\langle& a_1,\dots,a_n,b_1,\dots,b_n|
a_i^2=b_{i+k}, 1\le i\le n, (\text{indices taken mod $n$}),      \\
&a_{2}^{a_1}=a_{2}b_1,
a_{3}^{a_1}=a_{3}b_2,\dots,a_{k+1}^{a_1}=a_{k+1}b_k,\\&\qquad 
   (a_1,a_{k+2})=(a_1,a_{k+3})=\dots=(a_1,a_n)=1,\\&\qquad 
   (a_i,a_j)=1, \text{ for } 1<i,j\le n,\\&\qquad
   \text{ and } b_1,\dots,b_n \text { are central involutions}
   \rangle.
\end{align*}
We will show:
\begin{thm} \label{thmgnk} For $n \ge 2, 0 \le k<n-1,$ the group $\mathfrak G_{n,k}$ is a relative skew Hadamard difference set group.
\end{thm}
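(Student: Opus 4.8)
The plan is to realise $G=\mathfrak G_{n,k}$ as a class-$2$ central extension and then build the difference set one coset at a time. First I would record the structure: the $b_\ell$ are central involutions, $a_i^2=b_{i+k}$, and every commutator lies in $H:=\langle b_1,\dots,b_n\rangle$, so $G$ has nilpotency class $2$, $H\le Z(G)$, and $V:=G/H$ is elementary abelian, generated by the images of the $a_i$. To see the presentation does not collapse I would put each element into the normal form $a_1^{e_1}\cdots a_n^{e_n}b_1^{f_1}\cdots b_n^{f_n}$ and check the rewriting is confluent (equivalently, exhibit the corresponding factor set $\mu:V\times V\to H$ and verify the cocycle identity); this gives $|H|=|V|=2^n$ and $|G|=2^{2n}=h^2$, in agreement with Theorem~\ref{thm1}(i), with $h=u=2^n$, $k=\tfrac12 h(h-1)$, and $\lambda=\tfrac14 h(h-2)$.

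The engine of the construction is the squaring map $\beta:V\to H$, $\beta(x)=t(x)^2$ for a transversal $t$; it is well defined since $(t(x)h)^2=t(x)^2$. Using the class-$2$ identity $\bigl(\prod_i x_i\bigr)^2=\prod_i x_i^2\prod_{i<j}[x_j,x_i]$ together with $[a_j,a_1]=b_{j-1}$ for $2\le j\le k+1$, one finds that the $b_\ell$-coordinate of $\beta(x)$ is $e_{\ell-k}+e_1e_{\ell+1}$ for $1\le\ell\le k$ and $e_{\ell-k}$ for $k<\ell\le n$ (indices mod $n$, arithmetic in $\mathbb F_2$). The linear part is the invertible cyclic shift by $k$, and the quadratic correction is supported on $e_1=1$. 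The crucial claim is that $\beta$ is a bijection, and this is exactly where $k<n-1$ is used: the coordinates with $\ell>k$ recover $e_1,\dots,e_{n-k}$ outright, and the remaining coordinates back-solve in increasing order because $k<n-1$ makes the quadratic term of the $\ell$-th equation depend only on variables already determined (a strictly triangular system). Consequently $\beta(x)=1$ iff $x=0$, so every element of $G\setminus H$ has order $4$ and all involutions lie in $H$ — exactly what an $m=0$ example needs.

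I would then set $D=\bigsqcup_{x\ne0}t(x)K_x$, where for each $x\ne0$, using that $\beta(x)$ is a nontrivial involution, $K_x=\ker\psi_x$ is an index-$2$ subgroup of $H$ with $\beta(x)\notin K_x$, i.e. $\psi_x\in\widehat H$ is a nontrivial character with $\psi_x(\beta(x))=-1$. Since $(t(x)s)^{-1}=t(x)\bigl(s\,\beta(x)\bigr)$, the inverse of a $D$-element in $t(x)H$ falls in the complementary half $t(x)\beta(x)K_x$; hence $D\cap D^{-1}=\varnothing$, $G\setminus(D\cup D^{-1})=H$, and $|D|=(2^n-1)2^{n-1}=k$, so (1),(2) hold with $m=0$. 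What remains is that $D$ is a difference set, equivalently $\rho(D)\rho(D)^*=(k-\lambda)I=\tfrac{h^2}4 I$ for every nonprincipal irreducible $\rho$, organised by the central character $\chi=\rho|_H$. For $\chi=\mathbf 1$, $\rho$ is a nontrivial character of $V$ and $\rho(D)=\tfrac h2\sum_{x\ne0}\rho(t(x))=-\tfrac h2$; for $\chi\ne\mathbf 1$ one has $\rho(H)=0$ and $\rho(D)=\tfrac h2\sum_{x:\,\psi_x=\chi}\rho(t(x))$, since $\sum_{s\in K_x}\chi(s)$ is $\tfrac h2$ when $\chi=\psi_x$ and $0$ otherwise.

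The main obstacle is to choose the halves so that for every nontrivial $\chi$ exactly one $x$ contributes, i.e. so that $x\mapsto\psi_x$ is a bijection from $V\setminus\{0\}$ onto $\widehat H\setminus\{\mathbf 1\}$; then $\rho(D)=\tfrac h2\rho(t(x_0))$ is $\tfrac h2$ times a unitary and $\rho(D)\rho(D)^*=\tfrac{h^2}4 I$ is immediate. Such a choice exists because the bipartite graph joining $x$ to its admissible characters $\{\psi:\psi(\beta(x))=-1\}$ has degree $2^{n-1}$ on the $V$-side and, since $\beta$ is a bijection, degree $2^{n-1}$ on the $\widehat H$-side as well; a regular bipartite graph has a perfect matching. (Explicitly, identifying $H\cong\mathbb F_{2^n}$ and fixing $\theta$ with $\mathrm{Tr}(\theta)=1$, one may take $\psi_x$ dual to $\theta\,\beta(x)^{-1}$.) Feeding these values into the character criterion, and noting that the principal character gives the parameter identity $k(k-1)=\lambda(v-1)$, yields $DD^{-1}=k+\lambda(G-1)$. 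Thus $D$ is a $(v,k,\lambda)$ difference set and $G$ is a $(v,k,\lambda)_0$ relative skew Hadamard difference set group. I expect the two delicate points to be the bijectivity of $\beta$ (hence the existence of the matching) and the bookkeeping of the central extension; the rest is essentially forced by Theorem~\ref{thm1}.
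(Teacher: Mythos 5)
Your proposal is correct, and the difference set you build is exactly the paper's: $D=\bigsqcup_{x\ne 0}t(x)K_x$ with $K_x$ a maximal subgroup of $H$ avoiding $\beta(x)=t(x)^2$ and with $x\mapsto K_x$ injective; your two key claims (bijectivity of the squaring map $\beta$, and existence of the bijective assignment $x\mapsto K_x$) are precisely the paper's hypothesis (H1), i.e.\ Proposition \ref{lemgnkh1}. Your triangular back-substitution proof that $\beta$ is bijective is the paper's Lemma \ref{lemfff} in different clothing: there the map is written as the $\mathbb F_2$-matrix $P^{-1}A+P^{k}$ and $A+P^{k+1}$ is column-reduced to the invertible $P^{k+1}$, which is the same use of the hypothesis $k<n-1$. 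You genuinely diverge in the other two steps. For the assignment $x\mapsto K_x$, the paper composes $\beta$ with an explicitly constructed involution $\mu:V^{\times}\to V^{\times}$ satisfying $v\cdot\mu(v)=1$ (Lemma \ref{lemmu}); you instead observe that the admissibility graph between $V^{\times}$ and $\widehat H\setminus\{\mathbf 1\}$ is $2^{n-1}$-regular on both sides (this is where bijectivity of $\beta$ is re-used) and quote Hall's theorem, with the finite-field assignment $\psi_x\leftrightarrow\theta\,\beta(x)^{-1}$ as an explicit alternative; both arguments are valid, yours is less ad hoc, the paper's is coordinate-explicit and needs no marriage theorem. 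For the verification that $D$ is a difference set, the paper (Proposition \ref{lll}) multiplies $DD^{-1}$ out in the group algebra, using $M_iM_j=2^{n-2}H$ for $i\ne j$ together with $\sum_i M_i=(2^n-1)\cdot 1+(2^{n-1}-1)(H-1)$; you instead check $\rho(D)\rho(D)^{*}=(k-\lambda)I$ irreducible-by-irreducible, sorted by the central character $\chi=\rho|_H$, and finish by semisimplicity of $\mathbb C G$. The character route makes the role of the matching transparent (for each nontrivial $\chi$ exactly one half-coset survives, so $\rho(D)$ is $\tfrac h2$ times a unitary matrix) and it dovetails with Theorem \ref{thmrep}; the paper's computation is more elementary and avoids representation theory altogether. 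The one point you leave as a sketch that the paper settles more cleanly is the non-collapse of the presentation: rather than checking confluence of a normal form, the paper gets $|\mathfrak G_{n,k}|=2^{2n}$ at once from $\mathfrak G_{n,k}=\langle a_2,\dots,a_n\rangle\rtimes\langle a_1\rangle\cong\mathcal C_4^{\,n-1}\rtimes\mathcal C_4$; you should import that observation, since the parameter values $h=2^n$, $k$, $\lambda$ and the coset bookkeeping all rest on it.
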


Next, we note that, thinking of $D$ as a $h^2 \times h^2$ matrix via the regular representation, the matrix 
$$\mathcal H(D)=2D-J_{h^2}$$ is a matrix with entries $\pm 1$. In fact we have
\begin{thm} \label{thmhad} The matrix $\mathcal H(D)$ is a Hadamard matrix and its minimal polynomial is 
$$(x+h)(x^2+h^2).$$
\end{thm}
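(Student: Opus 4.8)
The plan is to work entirely in the image of the regular representation, where $D$, $D^{-1}$, $G$ become $h^{2}\times h^{2}$ matrices; here $D^{-1}=D^{\top}$, the group sum $G$ becomes the all-ones matrix $J$, and each row and column of $D$ has exactly $k$ ones (so $DJ=JD^{\top}=kJ$ and $J^{2}=h^{2}J$). The defining identity of a $(v,k,\lambda)$ difference set, $DD^{-1}=(k-\lambda)1+\lambda G$, becomes the matrix identity $DD^{\top}=(k-\lambda)I+\lambda J$. The claim that $\mathcal H(D)=2D-J$ has entries $\pm1$ is already noted in the text, so two things remain: the orthogonality relation and the minimal polynomial.

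For the Hadamard property I would simply expand
$$\mathcal H(D)\,\mathcal H(D)^{\top}=(2D-J)(2D^{\top}-J)=4DD^{\top}-2DJ-2JD^{\top}+J^{2},$$
substitute $DD^{\top}=(k-\lambda)I+\lambda J$, $DJ=JD^{\top}=kJ$ and $J^{2}=h^{2}J$, and collect terms to get $4(k-\lambda)I+(4\lambda-4k+h^{2})J$. Feeding in the values $k=\tfrac12 h(h-1)$ and $\lambda=\tfrac14 h(h-2)$ from Theorem~\ref{thm1} gives $k-\lambda=h^{2}/4$, so the coefficient of $I$ is $h^{2}$ and the coefficient of $J$ is $-4(k-\lambda)+h^{2}=0$. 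Hence $\mathcal H(D)\mathcal H(D)^{\top}=h^{2}I$, which (since the matrix is real, square, and $\pm1$-valued) is exactly the statement that $\mathcal H(D)$ is a Hadamard matrix of order $h^{2}$.

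For the minimal polynomial the key observation is that $J=G$ lies in the Bose--Mesner algebra of the Schur ring of Theorem~\ref{thmsring}; indeed $J=I+(H-1)+D+D^{-1}$, so $J$ commutes with $D$ and $\mathcal H(D)=2D-J$ sits inside this commutative, conjugate-transpose-closed algebra. Every element of that algebra is normal, hence diagonalizable, and its eigenvalues on the four common eigenspaces are read off the rows of the $P$-matrix. The $J$-eigenvalue on a given row is the sum of that row of $P$; this sum equals $h^{2}$ on the first row and $0$ on the remaining three (a quick check using the displayed $P$-matrix). Since the eigenvalue of $\mathcal H(D)=2D-J$ is twice the $D$-column entry minus the $J$-eigenvalue, one gets, from the $D$-column entries $\tfrac12 h(h-1),-\tfrac12 h,-\tfrac12 ih,\tfrac12 ih$, the eigenvalues $-h,\,-h,\,-ih,\,ih$ of $\mathcal H(D)$. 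The distinct eigenvalues are $-h$ and $\pm ih$, so by diagonalizability the minimal polynomial is $(x+h)(x-ih)(x+ih)=(x+h)(x^{2}+h^{2})$, as claimed.

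The only real subtlety — the ``hard part'' — is the justification that the minimal polynomial is the product over the \emph{distinct} eigenvalues, i.e.\ that $\mathcal H(D)$ is diagonalizable; this is where one must invoke that $\mathcal H(D)$ belongs to the (semisimple, normal) Bose--Mesner algebra, or equivalently observe directly that $h^{-1}\mathcal H(D)$ is a real orthogonal matrix and hence normal. Everything else is bookkeeping with the difference-set identity and the already-computed $P$-matrix.
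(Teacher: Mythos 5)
Your proposal is correct and follows essentially the same route as the paper: the paper likewise proves the Hadamard property by expanding $(2D-G)(2D-G)^{T}=4DD^{-1}-2DG-2GD^{-1}+G^{2}$ and substituting the difference-set relations, and obtains the minimal polynomial by simultaneously diagonalizing $D$ and $G$ (via the commuting normal matrices of Lemma \ref{lemdiag}) and pairing the eigenvalue $h^{2}$ of $G$ with the eigenvalue $k$ of $D$ on the all-ones vector, which is exactly what your $P$-matrix row-sum bookkeeping encodes. Your explicit remark that diagonalizability (normality of $\mathcal H(D)$, equivalently orthogonality of $h^{-1}\mathcal H(D)$) is what licenses reading the minimal polynomial off the distinct eigenvalues is the same justification the paper relies on, just stated more carefully.
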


We  recall that an  $n \times n$ matrix $M$ is a {\it Hadamard matrix} if its entries are $\pm 1$ and $MM^T=nI_n.$ \medskip

We note that the difference sets that we study satisfy the parameter condition given by Kesava Menon in \cite {km}, and so (in this case, their complements) are examples of what are known as Menon difference sets.   Thus the groups $\mathfrak G_{n,k}$ determine  a $2$-parameter family of Menon difference sets.


\medskip

\noindent {\bf Acknowledgements} We are grateful to Pace Nielsen for useful conversations regarding this paper. All calculations made in the preparation of this paper were accomplished using Magma \cite {Ma}. 

\section{Results concerning the parameters}

In this section we prove Theorem \ref {thm1} (i): 

\begin {lem} \label{lem1}
Let $G$ be a $(v,k,\lambda)_m$   relative skew Hadamard group with 
  difference set $D$  and subgroup $H$. Then
   $h=u$ is even. Further we have $|G|=h^2$ and 
$$\lambda=\frac 1 4 {h(h-2)},\quad k=\frac 1 2 h(h-1).$$
\end{lem}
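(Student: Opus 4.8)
The plan is to extract two numerical identities relating $h,u,k,\lambda$ and then force $u=h$ and the evenness of $h$ by an elementary divisibility argument; no representation theory should be needed.

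First I would read off a size identity from the coset hypotheses. Since $G\setminus(D\cup D^{-1})=H\cup Hg_1'\cup\dots\cup Hg_m'$ is a disjoint union of $m+1$ cosets and $D\cap D^{-1}=Hg_1\cup\dots\cup Hg_m$ is a disjoint union of $m$ cosets, counting orders gives $|D\cup D^{-1}|=(u-1-m)h$ and $|D\cap D^{-1}|=mh$. Combining this with the inclusion--exclusion identity $|D\cup D^{-1}|=|D|+|D^{-1}|-|D\cap D^{-1}|=2k-mh$ (using $|D^{-1}|=|D|=k$), the terms $mh$ cancel and I obtain
\[ 2k=(u-1)h. \]
Second, I would invoke the standard difference-set identity $DD^{-1}=(k-\lambda)+\lambda G$ in $\mathbb{Q}G$; applying the augmentation map and using $v=uh$ gives the familiar relation
\[ k(k-1)=\lambda(uh-1). \]

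The crux of the proof is then purely number-theoretic, and I expect this to be the one genuinely clever step: the integrality of $\lambda$ (it is a count of differences) together with the size identity pins down $u=h$. Set $N=uh-1$, so $uh\equiv 1\pmod N$. Substituting $2k=(u-1)h$ into $4k(k-1)=4\lambda N$ and reducing modulo $N$, I use $(u-1)h=uh-h\equiv 1-h$ and hence $(u-1)h-2\equiv-(h+1)$, so that
\[ 4\lambda N=(u-1)h\bigl((u-1)h-2\bigr)\equiv (1-h)\bigl(-(h+1)\bigr)=h^2-1 \pmod N. \]
Since the left side is $\equiv 0$, this yields $N\mid h^2-1$, i.e. $h^2\equiv 1\pmod N$. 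Multiplying $uh\equiv 1$ by $h$ then gives $u\equiv uh^2\equiv h\pmod N$, so $N\mid (u-h)$. As $h,u\ge 2$ one checks $|u-h|<uh-1=N$, and therefore $u=h$.

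Finally, with $u=h$ I would substitute back. The size identity gives $k=\tfrac12 h(h-1)$, and the difference-set relation gives $\lambda=k(k-1)/(h^2-1)=\tfrac14 h(h-2)$ after cancelling the factor $(h-1)(h+1)$. The parity of $h$ then drops out of integrality: if $h$ were odd, $h(h-2)$ would be odd and $\lambda=\tfrac14 h(h-2)$ would fail to be an integer, a contradiction; hence $h$ is even (and $h\ge 2$ since $h>1$). Since $v=uh=h^2$, this establishes every assertion of the lemma.
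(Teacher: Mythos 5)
Your proof is correct, and its central step is genuinely different from the paper's. Both arguments start from the same two ingredients: the counting identity $2k=(u-1)h$ (the paper writes it as $v=h+2k$, obtained from the same coset decomposition you use) and the standard relation $k(k-1)=\lambda(v-1)$. Where you diverge is in how the integrality of $\lambda$ forces $u=h$. The paper introduces $a=\gcd(hu-h,\,hu-1)$ and $b=\gcd(hu-h-2,\,hu-1)$, identifies them as $\gcd(h-1,u-1)$ and $\gcd(h+1,u+1)$ with $\gcd(a,b)\mid 2$, and then runs a two-case analysis (on whether $\gcd(a,b)$ is $1$ or $2$) in which integrality of $\lambda$ forces $ab$, respectively $ab/2$, to equal $hu-1$, contradicting the bound $ab<\tfrac12(hu-1)$ that holds whenever $h\ne u$. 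Your congruence argument replaces all of this bookkeeping: writing $N=uh-1$, the identity $4\lambda N=(u-1)h\bigl((u-1)h-2\bigr)$ reduces mod $N$ to $0\equiv h^2-1$, so $N\mid h^2-1$, whence $u\equiv uh^2\equiv h\pmod N$, and the crude estimate $|u-h|<N$ forces $u=h$. This is shorter, needs no case split, and the extraction of $k=\tfrac12h(h-1)$, $\lambda=\tfrac14h(h-2)$, and the evenness of $h$ at the end is identical in both treatments. The one point worth stating explicitly (it is equally implicit in the paper) is the nondegeneracy $u\ge 2$, i.e.\ $H\ne G$, which your inequality $|u-h|<uh-1$ requires: for $u=1$ one has $|u-h|=h-1=uh-1$, so the bound fails there. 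This is harmless, since $D$ is a nonempty subset of $G\setminus H$, forcing $H\ne G$.
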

\noindent {\it Proof}
Let $$A=Hg_1\cup \dots \cup Hg_m, \quad B=Hg_1'\cup \dots \cup Hg_m',$$ and 
$D=A+D_1, D^{-1}=A+D_1^{-1},$ where $A \cap D_1=\emptyset$. Thus we have 
$$|A|=|B|=hm,\quad  |D|=k=hm+|D_1|.$$
 Then from (1) and (2) of $\S 1$ we obtain
$$G=H+B+D_1+A+D_1^{-1}.$$ 
Thus we have $$v=|G|=h+hm+|D_1|+hm+|D_1^{-1}|=h+2hm+2|D_1|=h+2k.$$

Solving $v=hu, k(k-1)=\lambda(v-1), v=h+2k$
gives
$$\lambda=\frac 1 4 \frac {(hu-h)(hu-h-2)}{hu-1}.
$$

Let 
\begin{align*}
\tag {2.1} \label{t21}
a=\gcd(hu-h,hu-1), \,\,\,\, b=\gcd(hu-h-2,hu-1).
\end{align*}
 Then one can see that
$$a=\gcd(h-1,u-1),\quad b=\gcd(h+1,u+1).$$
Thus $\gcd(a,b) |2$ since $a|(h-1), b|(h+1)$ and $h > 1$. 
 
We wish to show that $h=u$.
 
Now if we have $h < u$, then we cannot have 
$(u+1)|(h+1)$,
so that  we have  $ab \le (h-1)(u+1)/2$. This gives  
$$ab \le \frac 1 2 (h-1)(u+1)=\frac 1 2 (hu-1+h-u)<\frac 1 2 (hu-1).$$ 
While if $h>u$, then  we cannot have   
$(h+1)|(u+1)$, so that       $ ab\le (u-1)(h+1)/2$, giving
$$ab\le \frac 1 2 (u-1)(h+1)=\frac 1 2 (hu-1+u-h) <\frac 1 2 (hu-1).
$$ Thus in both cases we get
 $ab <\frac 1 2 (hu-1).$ We show how this gives a contradiction. There are two cases:
 
 \noindent {\bf Case 1:} $\gcd(a,b)=1$. Then $a|(hu-1), b|(hu-1)$ and $\gcd(a,b)=1$ gives $ab|(hu-1)$. So let $hu-1=abc, c \in \mathbb N$. Then
 from (\ref{t21}) we have 
  $$\gcd\left ( \frac {hu-h}{a},c\right )=\gcd\left (
 \frac {hu-h-2}{b},c\right )=1,$$
 so that   
 \begin{align*}
 &\lambda=\frac 1 4 \frac {(hu-h)(hu-h-2)}{hu-1}=
 \frac 1 4 \frac {(hu-h)}{a}\frac {(hu-h-2)}{b}\frac {1}{c},
 \end{align*} which implies that $c=1$. But then we have $ab=hu-1>\frac {hu-1}{2}$, a contradiction.

 \noindent {\bf Case 2:}  $\gcd(a,b)=2$. Then $(ab/2)|(hu-1)$ and 
 so that  $hu-1=\frac {ab} {2} c, c \in \mathbb N$, where $\gcd\left ( \frac {hu-h}{a},c\right )=\gcd\left (
 \frac {hu-h-2}{b},c\right )=1$. Then 
  \begin{align*}
 &\lambda=\frac 1 4 \frac {(hu-h)(hu-h-2)}{hu-1}=
  \frac 1 2 \frac {(hu-h)}{a}\frac {(hu-h-2)}{b}\frac {1}{c}.
 \end{align*}
 This again forces $c=1$, so that $\frac {ab}2=hu-1,$ which is a contradiction. Thus $h=u$ and $v=h^2$. 
 
  Now if $h=u$, then we have 
  $$\lambda=\frac {(h^2-h)(h^2-h-2)}{4(h^2-1)}=\frac {h(h-1)(h-2)(h+1)}{4(h-1)(h+1)}=\frac {h(h-2)}{4},$$ so that $h$ is even.
 \qed\medskip

\section {$H$ is normal}

Let $D$ denote the difference set where $G=D \cup D^{-1}\cup H, H \le G, D \cap H=D \cap D^{-1}=\emptyset$.

Order the elements of $G$ according to the cosets $Hg_1,Hg_2,\dots,Hg_h$.

Then thinking of $D$, $H$ and $G$ as matrices via the regular representation (relative to the above order of $G$) we have
\begin{align*}
\tag {3.1} 
G=D+D^{-1}+H,\qquad D\cdot D^{-1}=\lambda G+(k-\lambda)\cdot 1.
\end{align*}
Note that the fact that $D^{-1}$ is also a difference set \cite [p. 57]  {mp}, together with the last equation of (3.1), gives $DD^{-1}=D^{-1}D$. 

For $m \in \mathbb N$ let $J_m$ be the all $1$ matrix of size $m \times m$. 
Then we have ordered the elements of $G$ so that $H=\mathrm{diag} (J_h,J_h,\dots,J_h)$. So solving for $D^{-1}$ from the first equation of (3.1), and using $DG=kG$,  the second equation gives
\begin{align*} \tag {3.2}
(k-\lambda)(G-1)=D^2+DH.
\end{align*}
However  (since $D^{-1}$ is also a difference set) we can interchange $D$ and $D^{-1}$ so as to obtain
\begin{align*}
\tag {3.3}
(k-\lambda)(G-1)=(D^{-1})^2+D^{-1}H.
\end{align*}
Now taking the inverse of  equation (3.2) we have
\begin{align*}\tag {3.4} 
(k-\lambda)(G-1)=(D^{-1})^2+HD^{-1}.
\end{align*}
Thus from equations (3.3) and (3.4) we must have $D^{-1}H=HD^{-1}$; taking inverses gives
$$DH=HD.$$

Thus $D$ commutes with $G,H,D^{-1}$. 
Now multiplying  $(k-\lambda)(G-1)=D^2+HD$ by $H$ we obtain
$$(k-\lambda)(hG-H)=D\cdot DH+hHD.$$
Multiplying by $H$ again we have
\begin{align*}
\tag {3.5}   h(k-\lambda)(hG-H)=(DH)^2+h^2(DH).\end{align*}

We now find the minimal polynomial for $DH$, by first finding the minimal polynomial for $hG-H$. A calculation shows  that
\begin{align*}
&(hG-H)^2=h^2(h^2-2)G+hH,\\&
(hG-H)^3=h^3(h^4-3h^2+3)G-h^2H.
\end{align*}
Thus $(hG-H), (hG-H)^2, (hG-H)^3$ are  in the span of $H,G$ and so are linearly dependent. 
Define 
$$F(x)=x(x+h)(x-h^3+h).$$ Then one finds that 
\begin{align*}\tag {3.6} 
F(hG-H)=0.
\end{align*}
Now let $\Delta=DH$. Then from (3.5) we have
\begin{align*}\tag {3.7}
hG-H=\frac {1} {h(k-\lambda)} (\Delta^2+h^2\Delta).
\end{align*}
It follows from (3.6), (3.7) that $\Delta$ satisfies the polynomial
$$ x \left( x+{h}^{2} \right)  \left( 2x+{h}^{2}+{h}^{3} \right) 
 \left( 2x+{h}^{2}-{h}^{3} \right)  \left( 2x+{h}^{2} \right) ^{2}.
$$

For $n \in \mathbb N$ we let $1_n=(1,1,\dots,1), 0_n=(0,0,\dots,0)  \in \mathbb R^n$.
Now from equation (3.5) and the definition of the function $F$ we see that: 

\noindent (A) the matrix 
$hG-H$  has eigenvalue $\mu=h^3-h$ with  an eigen space containing   $1_{h^2}$.\medskip 

\noindent (B)  the matrix  $hG-H$  has eigenvalue $\mu=-h$ with corresponding eigenspace containing the span of 
\begin{align*}
(1_h,0_h,0_h,\dots,0_h,-1_h),
(0_h,1_h,0_h,\dots,0_h,-1_h),
\dots
,(0_h,0_h,0_h,\dots,0_h,1_h,-1_h),
\end{align*} so that this eigenspace has dimension at least  $h-1$.\medskip 

\noindent (C) Lastly, $hG-H$  has eigenvalue $\mu=0$ with corresponding eigenspace containing the span of all vectors of the form
$(v_1,v_2,\dots,v_h)$, where $v_i \in \mathbb R^h$ satisfies $J_hv_i=0.$
Thus this eigenspace has dimension at least $h^2-h$. \medskip

Since $1+(h-1)+(h^2-h)=h^2$ 
we see that (A), (B), (C)  describe all the eigenspaces, and we conclude that $hG-H$ is diagonalizable. 

The eigenvalues for $(k-\lambda)h(hG-H)$ are thus
$$\mu'=(k-\lambda)h^2(h^2-1),\quad  \mu'=-h^2(k-\lambda),\quad \mu'=0,$$ with corresponding eigenspaces $E_{\mu'}$, as given in (A), (B), (C). 

\medskip

Let $g_1=1,g_2,\dots,g_h$ be coset representatives for $G/H$.
Let $d_i=|D \cap Hg_i|$, so that $d_1=0$. Let $D=(D_{ij})$, where the blocks are of size $h \times h$ and are $\{0,1\}$ matrices. 

Now from $DH=HD$ we see that $J_hD_{ij}=D_{ij}J_h$ for all $1\le i,j \le h$. 

 
 \begin{lem} \label {lemdiju}
 Let $A$ be an $h \times h$ matrix whose entries are $0,1$, and such that $J_hA=AJ_h$. Then every row and column of $A$ has the same number of $1$s in it.\end{lem}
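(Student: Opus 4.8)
The plan is to compute both products entrywise and read off the row and column sums directly. Writing $r_i=\sum_k A_{ik}$ for the $i$-th row sum of $A$ and $c_j=\sum_k A_{kj}$ for the $j$-th column sum, I would first observe that the $(i,j)$ entry of $AJ_h$ is $\sum_k A_{ik}(J_h)_{kj}=r_i$, which depends only on $i$, while the $(i,j)$ entry of $J_hA$ is $\sum_k (J_h)_{ik}A_{kj}=c_j$, which depends only on $j$. In other words, right multiplication by $J_h$ replaces every entry of a given row by that row's sum, and left multiplication by $J_h$ replaces every entry of a given column by that column's sum.

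Next I would invoke the hypothesis $J_hA=AJ_h$, which forces $r_i=c_j$ for every pair $(i,j)$. Fixing $j$ and letting $i$ vary shows that all the row sums $r_i$ coincide; fixing $i$ and letting $j$ vary shows that all the column sums $c_j$ coincide. Since $A$ has entries in $\{0,1\}$, each $r_i$ is exactly the number of $1$s in row $i$ and each $c_j$ is exactly the number of $1$s in column $j$, so every row of $A$ contains the same number of $1$s and every column of $A$ contains the same number of $1$s. (In fact the argument gives slightly more, namely that the common row-count equals the common column-count, since $r_i=c_j$.)

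I do not expect any genuine obstacle here: the statement is essentially a restatement of the elementary fact that $AJ_h$ and $J_hA$ have constant rows and constant columns, respectively, with the constants being the row and column sums of $A$. The only point requiring care is bookkeeping of indices—tracking which summation index is contracted in each product so that the surviving dependence is on $i$ for $AJ_h$ and on $j$ for $J_hA$—and noting that the $\{0,1\}$ hypothesis is used only at the very end, to interpret the equal sums as equal counts of $1$s.
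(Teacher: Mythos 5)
Your proof is correct and follows essentially the same route as the paper's: both arguments observe that $AJ_h$ has constant rows given by the row sums and $J_hA$ has constant columns given by the column sums, so that equating the two matrices forces all row and column sums to coincide. Your entrywise formulation $(AJ_h)_{ij}=r_i$, $(J_hA)_{ij}=c_j$ is just a tidier bookkeeping of the identical idea (and, like the paper, yields the common value of row and column counts), so there is nothing to add.
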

 \noindent {\it Proof}
Note that the $k$th column of $J_hA$ is $u(1,1,\dots,1)^T$, where $u$ is the number of $1$s in the $k$th column of $A$. Similarly,
the $k$th row of $AJ_h$ is $u(1,1,\dots,1)$, where $u$ is the number of $1$s in the $k$th row of $A$. 
 
 Let $a_i, 1 \le i\le h,$ be the number of $1$s in the $i$th row of $A$.
 Then the $i$th  row of $AJ_h$ is $(a_i,a_i,\dots,a_i)$. Thus 
 $$J_hA=\begin {pmatrix} a_1&a_1&\dots &a_1\\
  a_2&a_2&\dots &a_2\\
 \vdots&\vdots&\ddots&\vdots\\
 a_h&a_h&\dots &a_h
 \end{pmatrix}.$$
 Let $b_i, 1\le i\le h$ be the number of $1$s in the $i$th column of $A$. Then the $i$th column of $AJ_h$ is $(b_i,b_i,\dots,b_i)^T$, so that we have
 $$AJ_h=\begin {pmatrix} b_1&b_2&\dots &b_h\\
 b_1&b_2&\dots &b_h\\
 \vdots&\vdots&\ddots&\vdots\\
b_1&b_2&\dots &b_h
 \end{pmatrix}.$$
 Since $AJ_h=J_hA$ we see from the first columns of these matrices that $$b_1=a_1=a_2=\dots=a_h,$$ and from the first rows that
$$a_1=b_1=b_2=\dots=b_h.$$ But this shows that $a_i=a_j=b_r=b_s$ for all $1\le i,j,r,s \le h$, and the result follows.
 \qed \medskip

Thus from $HD_{ij}=D_{ij}H$ we see that each row and column of $D_{ij}$ has the same number of $1$s in it. Let this number be $d_{ij}$, so that $d_{ii}=0$.
Thus
$DH=HD=(d_{ij}J_h)$. 

Now $DD^{-1}=D^{-1}D$ with $D^{-1}=D^T$ shows that $D$ is a normal matrix. Clearly $H$ is a normal matrix. Thus we have
\begin{lem} \label{lemdiag} The matrices 
$D, H, G $ are commuting normal matrices and  are simultaneously diagonalizable. 
\qed
\end{lem}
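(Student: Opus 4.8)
The plan is to assemble three facts — that each of $D$, $H$, $G$ is individually a normal matrix, that they pairwise commute, and then to invoke the spectral theorem for commuting families of normal matrices — and to note that almost all of the work has already been done in the preceding paragraphs.

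First I would record normality of each matrix separately. Since the elements of $G$ have been ordered so that $H=\mathrm{diag}(J_h,\dots,J_h)$ and $G=J_{h^2}$, both $H$ and $G$ are real symmetric, hence normal. For $D$ the point is that the regular representation of the inverse set $D^{-1}$ is precisely the transpose $D^T$; the remark following (3.1) already gives $DD^{-1}=D^{-1}D$, so $DD^T=D^TD$ and $D$ is normal as well.

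Next I would verify pairwise commutativity. The relation $DH=HD$ was established above. For the remaining two pairs I would argue directly from the group-algebra interpretation: the row and column sums of $D$ are both $k$ (as $D$ is a sum of $k$ permutation matrices), whence $DG=kG=GD$, using the identity $DG=kG$ already invoked in deriving (3.2); and since $H\le G$ has order $h$, one gets $HG=hG=GH$ by the same row/column-sum computation. Thus $\{D,H,G\}$ is a pairwise-commuting family of normal matrices.

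The conclusion then follows from the standard result that a family of pairwise-commuting normal matrices over $\mathbb{C}$ is simultaneously unitarily diagonalizable, yielding a single orthonormal basis of common eigenvectors. I do not anticipate a genuine obstacle here: every ingredient other than this final appeal to the spectral theorem has already been derived, and the only point requiring a little care is to confirm that pairwise commutativity — rather than any stronger hypothesis — is enough for simultaneous diagonalization once each member of the family is known to be normal.
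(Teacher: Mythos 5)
Your proposal is correct and follows essentially the same route as the paper: the paper likewise gets normality of $D$ from $DD^{-1}=D^{-1}D$ together with $D^{-1}=D^{T}$, notes $H$ (and $G$) are obviously normal, uses the already-established commutation relations ($DH=HD$, $DG=kG$), and then appeals to simultaneous diagonalizability of a commuting family of normal matrices. The only difference is expository: you spell out the pairwise commutations with $G$ and the appeal to the spectral theorem, which the paper leaves implicit.
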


In particular $DH$ is diagonalizable. 
Next: if $\alpha $ is an eigenvalue for $\Delta=DH$ with eigenvector $v$, then 
$$(\Delta^2+h^2\Delta)v=(\alpha^2+h^2\alpha)v.$$
But $\Delta^2+h^2\Delta=(k-\lambda)h(hG-H)$, which shows that $v$ is also an eigenvector for $(k-\lambda)h(hG-H)$ with eigenvalue
$\alpha^2+h^2\alpha$. However we know the eigenvalues and eigenvectors for $(k-\lambda)h(hG-H)$. Thus there are three cases:

\noindent (A) Here 
$\alpha^2+h^2\alpha=(k-\lambda)h^2(h^2-1)$, in which case we solve for $\alpha$:
$$\alpha=\frac 1 2 \left (\pm h^3-h^2\right ).$$
Here the eigenvector is $1_{h^2}$. Since $\Delta^2+h^2\Delta$ is a matrix with non-negative entries it follows that
 $\frac 1 2 \left (- h^3-h^2\right )$ is not possible with this eigenvector. Thus we only have $\frac 1 2 \left ( h^3-h^2\right )$ as an eigenvalue  in this case.
\medskip

\noindent (B) Here 
$\alpha^2+h^2\alpha=-(k-\lambda)h^2,
$ so that
$\alpha=-h^2/2$. 
\medskip

\noindent (C) Here 
$\alpha^2+h^2\alpha=0$, so that $\alpha=0,-h^2$. 
  \medskip
  
  Since $DH$ is diagonalizable the dimensions of the eigenspaces in cases (A), (B), (C) must be $1,h-1,h^2-h$ (respectively). In particular, each
  eigenvector for $hG-H$ as in (B) is also an eigenvector for $DH$. Thus we have
  \begin{align*}
  \begin{pmatrix} 0&d_{12}J_h&d_{13}J_h&\dots&d_{1h}J_h\\
  d_{21}J_h&0&d_{23}J_h&\dots &d_{2h}J_h\\
  d_{31}J_h&d_{32}J_h&0&\dots&d_{3h}J_h\\
  \vdots&  \vdots&  \vdots&  \ddots&  \vdots\\
  d_{h1}J_h&d_{h2}J_h&d_{h3}J_h&\dots&0\end{pmatrix}
  \begin{pmatrix} 1_h\\0_h\\0_h\\ \vdots \\
  -1_h\end {pmatrix} =-\frac {h^2} 2   \begin{pmatrix} 1_h\\0_h\\0_h\\ \vdots \\
  -1_h\end {pmatrix},
  \end{align*} which, since $J_h 1_h=h 1_h$, gives 
  $$d_{1h}=\frac h 2, d_{21}=d_{2h},d_{31}=d_{3h},\dots,d_{h-1,1}=d_{h-1,h},d_{h1}=\frac h 2.$$
  Similarly, using
   \begin{align*}
  \begin{pmatrix} 0&d_{12}J_h&d_{13}J_h&\dots&d_{1h}J_h\\
  d_{21}J_h&0&d_{23}J_h&\dots &d_{2h}J_h\\
  d_{31}J_h&d_{32}J_h&0&\dots&d_{3h}J_h\\
  \vdots&  \vdots&  \vdots&  \ddots&  \vdots\\
  d_{h1}J_h&d_{h2}J_h&d_{h3}J_h&\dots&0\end{pmatrix}
  \begin{pmatrix} 0_h\\1_h\\0_h\\ \vdots \\
  -1_h\end {pmatrix} =-\frac {h^2} 2   \begin{pmatrix} 0_h\\1_h\\0_h\\ \vdots \\
  -1_h\end {pmatrix},
  \end{align*} we obtain
 \begin{align*}d_{12}&=d_{1h}=\frac h 2,d_{2h}=\frac h 2=d_{21},d_{32}=d_{3h}=d_{31},\dots,\\&d_{h-1,2}=d_{h-1,h}=d_{h-1,1},d_{h2}=\frac h 2.
 \end{align*}
  Continuing we see that 
  $$d_{ij}=\frac h 2 \text {  for all } 1\le i\ne j\le h.$$ 
  
This shows that $|D \cap gH|=\frac h 2$ for all $g \notin H$, and so also gives
 \begin{align*}\tag {3.8} DH=HD=\frac h 2 (G-H). \end{align*}
  
  \begin{prop} \label{lemnorm}
  Let $H \le G, |H|=h, |G|=n,$ and order the elements of $G$ according to the cosets of $H$ as in the above. Represent elements of $G$ using the regular representation relative to this ordering. 
   Then for $g \in G$ we write $g=(g_{ij})$, where each $g_{ij}$ is a $0,1$ matrix of size  $h \times h$. Then $H\triangleleft G$ if and only if for all $g \in G$ and all $1\le i,j\le n/h$ each $g_{ij}$ is either the zero matrix or  is a permutation matrix.
  \end{prop}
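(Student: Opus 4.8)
The plan is to translate the block condition into a statement about how left multiplication by $g$ permutes the right cosets of $H$. Write $t=n/h$ and let $C_1=Hg_1,\dots,C_t=Hg_t$ be the right cosets, with $g_1=1$, listed so that $G$ is ordered block-by-block as $C_1,C_2,\dots,C_t$. With the regular representation normalized as in $\S 3$ (so that $g$ acts by $x\mapsto gx$ and $H=\mathrm{diag}(J_h,\dots,J_h)$), the $(i,j)$ block $g_{ij}$ of the permutation matrix of $g$ records the restriction of $x\mapsto gx$ to those elements of $C_j$ that land in $C_i$; thus $g_{ij}\ne 0$ precisely when $gC_j\cap C_i\ne\emptyset$, and every nonzero $g_{ij}$ is automatically a partial permutation matrix. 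The key observation I would isolate is that every $g_{ij}$ is either $0$ or a full $h\times h$ permutation matrix exactly when $gC_j$ is contained in (hence equal to) a single coset $C_i$ for every $j$.

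First I would do the easy direction $H\triangleleft G\Rightarrow$ block condition. If $H$ is normal then $Hg_j=g_jH$, so $gC_j=gHg_j=gg_jH=H(gg_j)$ is the single right coset $C_k$ with $Hg_k=Hgg_j$. Hence $x\mapsto gx$ carries $C_j$ bijectively onto $C_k$, so $g_{kj}$ is an $h\times h$ permutation matrix and $g_{ij}=0$ for $i\ne k$; this yields the block condition for every $g$.

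The converse is the step I expect to be the main obstacle, and it turns on a counting argument. Assume every block of every $g$ is $0$ or a permutation matrix. Fix $g$ and look at the block-column $j$, i.e.\ the $n\times h$ strip $g_{1j},\dots,g_{tj}$. Because the permutation matrix of $g$ has a single $1$ in each of these $h$ columns, the strip contains exactly $h$ ones; since each nonzero block contributes exactly $h$ ones and each zero block none, there is exactly one nonzero block $g_{kj}$. Thus $gC_j\subseteq C_k$, and equality follows from $|gC_j|=|C_k|=h$. Taking $j=1$ (so $C_1=H$) gives $gH=C_k=Hg_k$; as $g\in gH$, this right coset must be $Hg$, whence $gH=Hg$. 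Since $g$ was arbitrary, $H\triangleleft G$.

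The only real care needed is bookkeeping: matching the left-regular convention to the ordering by right cosets so that $H$ becomes block-diagonal, and noting that ``$gC_j$ is a single coset for all $j$'' is equivalent to $gHg^{-1}=H$ and is therefore independent of $j$, which is what makes the two directions line up. Everything else reduces to the cardinality count above.
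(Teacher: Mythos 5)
Your proof is correct, and it takes a route that differs from the paper's in a meaningful way. The paper never speaks of where left multiplication sends cosets: it encodes normality as the matrix identity $gH=Hg$ (with $H=\mathrm{diag}(J_h,\dots,J_h)$), reduces this blockwise to $g_{ij}J_h=J_hg_{ij}$, and then invokes its Lemma 3.2 (a $0,1$ matrix commutes with $J_h$ iff all its rows and columns have the same number of $1$s) together with the fact that $g$ is a permutation matrix to conclude that nonzero blocks are permutation matrices; the converse is immediate from $PJ_h=J_h=J_hP$. You instead interpret the block $g_{ij}$ as recording the transition $C_j\to C_i$ of right cosets under $x\mapsto gx$, prove the forward direction by the computation $gHg_j=Hgg_j$, and prove the converse by counting the $h$ ones in a block-column to force exactly one nonzero block, then specialize to $j=1$ to get $gH=Hg_k=Hg$. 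The two arguments buy different things: the paper's commutation lemma is not a throwaway --- it is reused immediately afterwards on the blocks $D_{ij}$ of the difference set (whose blocks have $h/2$ ones per row and column, not $1$), so deriving the proposition from that same lemma keeps \S 3 running on a single engine; your argument, by contrast, is self-contained, needs no auxiliary lemma, and makes the group-theoretic content visible (nonzero blocks are exactly the bijections $C_j\to C_k$ induced by left translation), though it would not generalize as stated to the $D_{ij}$ blocks, where the paper's lemma is still needed. One small point to keep in mind if you write this up: your counting step and the identification $C_1=H$ rely on the ordering being by right cosets with $g_1=1$ and on the left-multiplication convention for the regular representation, which is indeed the paper's convention, so your bookkeeping remark is apt rather than pedantic.
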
 
  \noindent {\it Proof}  We note that 
   $H\triangleleft G$ if and only if for all $g \in G$ we have $Hg=gH$, where $H=\mathrm{diag}(J_h,J_h,\dots,J_h)$. 
   
   Assume that $H\triangleleft G, g \in G, g=(g_{ij})_{1\le i,j \le h}$, where each $g_{ij}$ is an $h \times h$ matrix. Then $gH=Hg$ implies that $g_{ij}J_h=J_hg_{ij}$ for all $1\le i,j \le n/h$. 
   By Lemma \ref {lemdiju} this is true if and only if all the rows and columns of $g_{ij}$ have the same number of $1$s in them. Since each row and column of $g$ has exactly one $1$ in it (the rest of the entries being $0$) we see that if $g_{ij}\ne 0$, then each row and column of $g_{ij}$ has exactly one $1$ in it. Thus, for fixed $i,j$,  no other $g_{ik}, k \ne j,$ or $g_{kj}, k \ne i,$ can be non-zero. In particular, each $g_{ij}$ is a permutation matrix.
   
   Now assume that for all $g \in G$ and all $1\le i,j\le n/h$ each $g_{ij}$ is either the zero matrix or  is a permutation matrix. We wish to show that 
    $H\triangleleft G$ i.e. that $g_{ij}J_h=J_hg_{ij}$ for all $1\le i,j \le n/h. $  This is certainly true if $g_{ij}=0$, while if $g_{ij}$ is a permutation matrix, then $g_{ij}J_h=J_h=J_hg_{ij}$, and so we are done.\qed\medskip

Let $D$ denote a  difference set where $G=D \cup D^{-1}\cup H, H \le G, D \cap H=D \cap D^{-1}=\emptyset$. 
   We now wish to show that  $H\triangleleft G$.
   
   From the above we know that $|G|=h^2, h=|H|$, where $h$ is even and $D=(d_{ij})$, where either $D_{ij}=0$ or $D_{ij}$ is a $0,1$ matrix that  has $h/2$ $1$s in each row and column. We wish to show that $gH=Hg$ for all $g \in G$. This is certainly true if $g \in H$, so assume that $g \notin H$. 
   Write $g=(g_{ij})$ as in the above. Since $g \notin H$ we either have $g \in D$ or $g \in D^{-1}$. Without loss of generality we can assume that $g \in D$.
   Now either $D_{ij}=0$ or $D_{ij}$ is a $0,1$ matrix that  has $h/2$ $1$s in each row and column, so either $g_{ij}=0$ or $g_{ij}$ is a $0,1$ matrix that  has one  $1$ in each row and column. It follows that $g_{ij}J_h=J_hg_{ij}$, and so $H\triangleleft G$.\qed\medskip 
   
   We have thus proved the following, which includes proofs of Theorem \ref {thm1} (ii), (iii):
   
   \begin{thm} \label {thm111}
Let $D$ denote a  difference set where $G=D \cup D^{-1}\cup H, H \le G, D \cap H=D \cap D^{-1}=\emptyset$. Then

\noindent (i) $|G|=h^2$ where $h=|H|$ and $h$ is even;

\noindent (ii) $H\triangleleft G$;

\noindent (iii) each  coset  $Hg\ne H$ meets $D$ in $h/2$ points;

\noindent (iv) if we write $D=(D_{ij})$, with $h \times h$ blocks $D_{ij}$, then  each $D_{ij}$ is either a zero matrix or is a $0,1$ matrix with $h/2$   $1$'s in each row and column. 
\qed 
\end{thm}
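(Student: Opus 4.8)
The plan is to assemble the four claims from the matrix analysis already set up, treating $D$, $H$, $G$ as elements of the (left) regular representation ordered by the cosets $Hg_1,\dots,Hg_h$, so that $H=\mathrm{diag}(J_h,\dots,J_h)$. Part (i) is nothing more than Lemma~\ref{lem1} specialized to $m=0$: it gives $h=u$ even, $|G|=h^2$, and the values of $k,\lambda$; in particular $k-\lambda=\tfrac14 h^2$, a number I will need below.

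For (iii) and (iv) I would first exploit the fact that $D^{-1}$ is again a difference set to force $DH=HD$, so that $D,H,G$ are commuting normal matrices and are simultaneously diagonalizable (Lemma~\ref{lemdiag}). Writing $\Delta=DH$, the defining relation $DD^{-1}=\lambda G+(k-\lambda)\cdot 1$ can be massaged (after multiplying twice by $H$) into the single identity $\Delta^2+h^2\Delta=h(k-\lambda)(hG-H)$. Since I can compute the minimal polynomial of $hG-H$ directly from the multiplication table of $H$ and $G$, I obtain its three eigenvalues $h^3-h,-h,0$ with eigenspaces of dimensions $1,h-1,h^2-h$ that exhaust $\mathbb R^{h^2}$. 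Transporting this through the quadratic $\alpha\mapsto\alpha^2+h^2\alpha$ and discarding the root excluded by nonnegativity of $\Delta$, the eigenvalue $-h^2(k-\lambda)=-h^4/4$ on the $(h-1)$-dimensional space yields $(\alpha+h^2/2)^2=0$, i.e. $\alpha=-h^2/2$ on that entire eigenspace.

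The numerical payoff comes from Lemma~\ref{lemdiju}: because each block $D_{ij}$ commutes with $J_h$, it has a constant row and column sum $d_{ij}$, and $DH=(d_{ij}J_h)$. Feeding the explicit case-(B) eigenvectors $(1_h,0_h,\dots,0_h,-1_h)$, $(0_h,1_h,0_h,\dots,-1_h),\dots$ into $\Delta v=-\tfrac{h^2}{2}v$ and using $J_h1_h=h1_h$ solves the resulting linear system to give $d_{ij}=h/2$ for every $i\ne j$ (with $d_{ii}=0$). This is exactly (iv), and it simultaneously yields (iii) together with the compact identity $DH=HD=\tfrac h2(G-H)$.

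Finally, (ii) I would deduce from the block criterion of Proposition~\ref{lemnorm}: $H\triangleleft G$ precisely when, for every $g\in G$, each block $g_{ij}$ is either zero or a permutation matrix. Any $g\notin H$ lies in $D$ or $D^{-1}$, and each block of such a group element is a $0,1$ partial permutation whose nonzero pattern is constrained by the block structure of $D$ from (iv); since a partial permutation matrix with constant row and column sums must be either empty or a full permutation, the criterion holds and $H$ is normal. I expect the genuine obstacle to be the eigenvalue bookkeeping of the third paragraph — specifically, confirming that the $(h-1)$-dimensional eigenspace of $hG-H$ collapses entirely onto the single $\Delta$-eigenvalue $-h^2/2$ (this is where the double root, forced by $k-\lambda=h^2/4$, is essential) and that Lemma~\ref{lemdiju} then pins down all of the $d_{ij}$. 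Once $DH=\tfrac h2(G-H)$ is in hand, the normality and coset-count statements follow cleanly.
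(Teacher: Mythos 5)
Your proposal follows the paper's own proof essentially line for line: part (i) via Lemma \ref{lem1} with $m=0$; then the commutation $DH=HD$ and simultaneous diagonalizability (Lemma \ref{lemdiag}), the identity $(DH)^2+h^2(DH)=h(k-\lambda)(hG-H)$, the eigenvalue bookkeeping for $hG-H$ with the double root $\alpha=-h^2/2$ forced by $k-\lambda=h^2/4$, Lemma \ref{lemdiju} giving constant block sums, the case-(B) eigenvectors yielding $d_{ij}=h/2$ and hence (iii), (iv) and $DH=HD=\tfrac h2(G-H)$, and finally the block criterion of Proposition \ref{lemnorm} for (ii). This is exactly the paper's route, down to the final step, where your assertion that each block of a single element $g\in D$ is zero or a full permutation matrix because it sits inside a block $D_{ij}$ with constant row and column sums is precisely the (equally terse) justification the paper itself gives.
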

   
   For Theorem \ref {thm1} (iv) we note that if $g \in G$ is an involution that is not in $H$, then we must have $g\in D$ or $g \in D^{-1}$. In either case we have $g \in D \cap D^{-1}$, a contradiction.

     For Theorem \ref {thm1} (v) we show that $H\triangleleft G$ does not have a complement. So suppose that $L \le G$ is 
a complement to $H$. 
Now since $L$ is a complement to $H$ we have $|L|=|G|/|H|=h$, which is even. Thus $L$ contains an involution that is not in $H$, a contradiction.

   This now concludes the proof of Theorem \ref {thm1}. \qed
   
     \section {    $H$ and subgroups of index $2$}
     
     We prove the following result, which gives Theorem \ref {thmfr} (i):
  
  \begin{thm} \label {tmfr}  Let $G$ be a $(v,k,\lambda)_0$ relative skew Hadamard difference set group with  difference set $ D $ and subgroup $H$.
  
  Then the  subgroup $H $ is contained in every index $2$ subgroup of $G$.
  Further, $D$ meets each such  subgroup in exactly $\lambda$ points.
  
If  $G$ is a $2$-group,  
  then $H$ is a subgroup of the Frattini subgroup of $G$.

  \end{thm}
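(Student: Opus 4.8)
The plan is to exploit the quotient homomorphism onto $\mathcal{C}_2$ in order to linearize the difference-set equation. Let $N \le G$ be a subgroup of index $2$; being of index $2$ it is normal, so I have a surjection $\phi \colon G \to G/N \cong \mathcal{C}_2 = \{1,t\}$, which I extend linearly to a map $\mathbb{Q}G \to \mathbb{Q}\mathcal{C}_2$. For any $X \subseteq G$ one has $\phi(X) = |X \cap N|\cdot 1 + |X\setminus N|\cdot t$; in particular $\phi(G) = \tfrac{h^2}{2}(1+t)$, and since $N$ is closed under inversion one gets $\phi(D^{-1}) = \phi(D)$. Writing $p = |D \cap N|$ and $q = |D \setminus N|$, so that $\phi(D) = p + qt$ and $p + q = k$, I apply $\phi$ to the fundamental identity $D\cdot D^{-1} = \lambda G + (k-\lambda)\cdot 1$ of (3.1). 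Using $t^2 = 1$ and comparing the coefficients of $1$ and $t$ yields $p^2 + q^2 = \tfrac{\lambda h^2}{2} + k - \lambda$ together with $2pq = \tfrac{\lambda h^2}{2}$, whence $(p-q)^2 = k - \lambda$.

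By Lemma \ref{lem1} one computes $k - \lambda = \tfrac12 h(h-1) - \tfrac14 h(h-2) = \tfrac14 h^2$, so $p - q = \pm \tfrac h2$. Combined with $p + q = k = \tfrac12 h(h-1)$, this forces $p = |D \cap N|$ to equal either $\tfrac{h^2}{4}$ or $\lambda = \tfrac14 h(h-2)$. To decide between the two, I apply $\phi$ to the decomposition $G = H + D + D^{-1}$ of (3.1), which gives $\phi(H) = \bigl(\tfrac{h^2}{2} - 2p\bigr) + \bigl(\tfrac{h^2}{2} - 2q\bigr) t$, so that $|H \cap N| = \tfrac{h^2}{2} - 2p$. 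The case $p = \tfrac{h^2}{4}$ would give $|H \cap N| = 0$, which is impossible since $1 \in H \cap N$. Hence $p = \lambda$, giving $|H \cap N| = \tfrac{h^2}{2} - 2\lambda = h = |H|$; thus $H \subseteq N$, and at the same time $|D \cap N| = \lambda$, which establishes the first two assertions.

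For the final assertion, suppose $G$ is a $2$-group. Then every maximal subgroup of $G$ has index $2$, because in a finite $p$-group each maximal subgroup is normal of index $p$. By what has just been shown, every maximal subgroup therefore contains $H$. Since $\Phi(G)$ is the intersection of all maximal subgroups of $G$, it follows that $H \le \Phi(G)$.

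I expect the main obstacle to be the case distinction $p \in \{h^2/4,\ \lambda\}$: the coefficient-comparison alone does not single out the correct value, and the decisive input is the trivial-but-essential observation that $1 \in H \cap N$, which eliminates the spurious branch. Everything else is the routine bookkeeping of pushing the two identities of (3.1) through the two-element quotient and substituting the parameter values from Lemma \ref{lem1}.
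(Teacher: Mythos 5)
Your proof is correct, and it reaches the theorem by the same basic device the paper uses, namely pushing group-algebra identities through the index-$2$ quotient and comparing coefficients of $1$ and $t$, but with genuinely lighter inputs. The paper projects the identity $D^2=\lambda G+\tfrac h2 H-(k-\lambda)1$, whose derivation combines (3.2) with the structural equation (3.8), $DH=\tfrac h2(G-H)$; that equation is a substantive product of Section 3 (it rests on $H\triangleleft G$ and the coset-intersection theorem). Because the projected $D^2$-identity already couples $d_1,d_2$ to $h_1=|H\cap M|$ and $h_2$, the paper's system (4.1)--(4.3) collapses to $\left(d_1-d_2+\tfrac h2\right)^2=0$ and has a unique solution, so no case split ever arises. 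You instead project only the defining equation $DD^{-1}=\lambda G+(k-\lambda)1$ together with $G=H+D+D^{-1}$; the price is the sign ambiguity $p-q=\pm\tfrac h2$, which you then correctly eliminate by the positivity observation $1\in H\cap N$ (the branch $p=h^2/4$ would force $|H\cap N|=0$). The net effect is that your argument uses nothing beyond Lemma \ref{lem1} and the definitions --- in particular it does not need $H\triangleleft G$ or $|D\cap Hg|=h/2$ --- so it is more self-contained than the paper's proof, at the cost of a short case analysis; the paper's richer identity buys uniqueness of the solution but imports Section 3. Your handling of the $2$-group statement (every maximal subgroup of a finite $2$-group is normal of index $2$, hence contains $H$, hence $H\le\Phi(G)$) is identical to the paper's. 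One cosmetic point: the cleanest justification of $\phi(D^{-1})=\phi(D)$ is that every element of $G/N\cong\mathcal C_2$ is its own inverse, so $\phi(d^{-1})=\phi(d)^{-1}=\phi(d)$ for each $d\in D$; this is what your phrase about $N$ being closed under inversion is implicitly doing, and it is fine.
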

  \noindent {\it Proof}  From  Lemma \ref {lem1}  we know that  $|G|=h^2, k=\frac {h(h-1)}{2},\lambda=\frac {h(h-2)}{4}$.
  Let $M\le G$ be a   subgroup of index $2$ and let $\pi:G \to G/M=\langle t:t^2=1\rangle$ be the quotient map.
  Let 
  $$|D \cap M|=d_1,\quad |H \cap M|=h_1,$$ so that
  $$\pi(D)=d_1\cdot 1+(k-d_1)t,\quad  \pi(H)=h_1\cdot 1 +(h-h_1) t.$$
  Let $d_2=k-d_1, h_2=h-h_1.$ Then we have the equations
  \begin{align*}\tag {4.1} &
  d_1+d_2=k,\quad h_1+h_2=h,  \quad  k=h(h-1)/2,\quad \lambda=h(h-2)/4.
  \end{align*}
  Now from equations (3.2) and  (3.8) we deduce that
  $$D^2=\lambda G+\frac h 2H-(k-\lambda)1.$$ 
  Taking the image of this under $\pi$, and using the fact that $\pi(D)=d_11+d_2t$, we obtain two equations (by looking at the coefficients of $1$ and $t$):
  \begin{align*}&
  \tag {4.2}d_1^2+d_2^2=\lambda h^2/2+hh_1/2+\lambda-k;\\
  &2d_1d_2=\lambda h^2/2+hh_2/2.
  \end{align*}
  Now $D+D^{-1}=G-H$   gives (by acting by $\pi$)
   \begin{align*}&
2d_1+2d_2t=h^2/2(1+t)-(h_1+h_2t),
\end{align*}
which gives
 \begin{align*}&
  \tag {4.3}2d_1=h^2/2-h_1,\quad 2d_2=h^2/2-h_2.\end{align*}
  Solving   equations (4.1), (4.2), (4.3)  we find that 
  $$h_1=h,\quad h_2=0,\quad d_1=\lambda,\quad d_2=k-\lambda.$$
  Thus $D$ meets $M$ in $\lambda$ points, and all of $H$ is in $M$. Since this is true for any maximal subgroup $M$ we see that $H$ is contained in the Frattini subgroup if $G$ is a $2$-group, since every maximal subgroup of such a group has index  $2$.
  \qed

    \section {Proof of the $p$ odd case}
    
    Let $N\triangleleft G$ be of index $p$, a prime. Let $$\pi:G \to Q=G/N=\langle t\rangle\cong \mathcal C_p,$$ be the quotient map.
Then
we have 
$$\pi\left (\sum_{g \in G} g\right )=\frac {h^2}{p} \sum_{q \in Q} q=\frac {h^2}{p} \sum_{i=0}^{p-1} t^i.$$
We let
$$\pi(D)=\sum_{i=0}^{p-1}x_it^i,\quad \pi(H)=\sum_{i=0}^{p-1}y_it^i,$$ where $x_i,y_i \ge0$ are integers, and
$$\sum_{i=0}^{p-1} x_i=k,\quad \sum_{i=0}^{p-1}y_i=h.$$

We may represent elements of $Q$ as matrices where the generator $t$ corresponds to 
$P=\begin{pmatrix} 0&1&0&0&\dots \\
 0&0&1&0&\dots\\ 
 \vdots&\vdots&\vdots&\ddots&\vdots \\
  0&0&0&\dots&1 \\
  1&0&0&\dots&0
  \end{pmatrix}$.
Now we can simultaneously  diagonalize  $\pi(D),\pi(D^{-1}),$ $ \pi(G),$ $ \pi(H)$ using the matrix
$R=(\zeta^{-(i-1)(j-1)})$, where $\zeta=\exp{2\pi i/p}$.
We first note from \cite [\S 3.2] {da} that
$$RPR^{-1}=\mathrm{diag}(1,\zeta,\zeta^2,\dots,\zeta^{p-1}).$$ 
From this it follows that

\begin{align*}&
  \tilde H=RHR^{-1}=\\& \begin{pmatrix} 
  \sum_{i=0}^{p-1}y_i&0&0&0&\dots&0 \\
 0&\sum_{i=0}^{p-1}y_i\zeta^i&0&0&\dots&0\\ 
 0&0&\sum_{i=0}^{p-1}y_i\zeta^{2i}&0&\dots &0\\
 \vdots&\vdots&\vdots&\ddots&\vdots&\vdots \\
  0&0&0&\dots&\sum_{i=0}^{p-1}y_i\zeta^{(p-2)i}&0 \\
  0&0&0&\dots&0&\sum_{i=0}^{p-1}y_i\zeta^{(p-1)i}
  \end{pmatrix};\\
  &\tag {5.1} \label{t50} \tilde D=RDR^{-1}=\\& \begin{pmatrix} \sum_{i=0}^{p-1}x_i&0&0&0&\dots \\
 0&   \sum_{i=0}^{p-1}x_i\zeta^i  &0&0&\dots\\
 0&0&\sum_{i=0}^{p-1}x_i\zeta^{2i}&0&\vdots\\ 
 \vdots&\vdots&\vdots&\ddots&\vdots \\
  0&0&\dots&\sum_{i=0}^{p-1}x_i\zeta^{(p-2)i}&0 \\
  0&0&\dots&0&\sum_{i=0}^{p-1}x_i\zeta^{(p-1)i}
  \end{pmatrix};\\
    &\tilde D^{-1}=RD^{-1}R^{-1}=\\&\quad  \begin{pmatrix} \sum_{i=0}^{p-1}x_i&0&0&0&\dots \\
 0&   \sum_{i=0}^{p-1}x_i\zeta^{-i}  &0&0&\dots\\
 0&0&\sum_{i=0}^{p-1}x_i\zeta^{-2i}&0&\vdots\\ 
 \vdots&\vdots&\ddots&\vdots&\vdots \\
  0&0&\dots&\sum_{i=0}^{p-1}x_i\zeta^{-(p-2)i}&0 \\
  0&0&\dots&0&\sum_{i=0}^{p-1}x_i\zeta^{-(p-1)i}
  \end{pmatrix}.
  \end{align*} We also have 
  \begin{align*}
  &\tilde G=RGR^{-1}= \begin{pmatrix} h^2&0&0&\dots&0 \\
 0&0&0&\dots&0\\ 
 \vdots&\vdots&\vdots&\ddots&\vdots  \\
  0&0&0&\dots&0 \\
  0&0&0&\dots&0
  \end{pmatrix}.
    \end{align*}

From $H^2=hH$ we see that the minimal polynomial of $H$ is $x(x-h)$, and so the  minimal polynomial of $\tilde H$ is a divisor of $x(x-h)$. In particular, the eigenvalues of $\tilde H$ are either $0$ or $h$.
Now we know that $\sum_{i=0}^{p-1}y_i=h$, and for $1\le j\le p-1$ we must also have
\begin{align*}\tag {5.2} \label {t51}
\sum_{i=0}^{p-1}y_i\zeta^{ij} \in \{0,h\}.
\end{align*}

We rewrite equations (\ref {t51})  in matrix form as
\begin{align*}\tag {5.3}\label {t52}
\begin{pmatrix} 1&1&1&1&\dots&1\\
1&\zeta &\zeta^2&\zeta^3&\dots&\zeta^{p-1}\\
1&\zeta^{2}&\zeta^{4}&\zeta^6&\dots&\zeta^{2(p-1)}\\
\vdots&\vdots&\vdots&\dots&\vdots\\
1&\zeta^{p-1}&\zeta^{2(p-1)}&\zeta^{3(p-1)}&\dots&\zeta^{(p-1)^2}
\end{pmatrix} 
\begin{pmatrix} y_0\\y_1\\y_2\\y_3\\
\vdots\\y_{p-1}
\end{pmatrix} =
\begin{pmatrix} h\\
0 \text { or } h\\
0 \text { or } h\\
0 \text { or } h\\
\vdots \\
0 \text { or } h
\end{pmatrix}.
\end{align*}
Let $T$ denote the matrix
in (\ref{t52}). Then $T^{-1}=\frac 1 p T^*$.
Thus
\begin{align*}
\begin{pmatrix} y_0\\y_1\\y_2\\y_3\\
\vdots\\y_{p-1}
\end{pmatrix}=\frac 1 p \begin{pmatrix} 1&1&1&1&\dots&1\\
1&\zeta^{-1} &\zeta^{-2}&\zeta^{-3}&\dots&\zeta^{-(p-1)}\\
1&\zeta^{-2}&\zeta^{-4}&\zeta^{-6}&\dots&\zeta^{-2(p-1)}\\
\vdots&\vdots&\vdots&\dots&\vdots\\
1&\zeta^{-(p-1)}&\zeta^{-2(p-1)}&\zeta^{-3(p-1)}&\dots&\zeta^{-(p-1)^2}
\end{pmatrix} 
\begin{pmatrix} h\\
0 \text { or } h\\
0 \text { or } h\\
0 \text { or } h\\
\vdots \\
0 \text { or } h
\end{pmatrix}.
\end{align*}

Now, since $p$ is a prime,  the minimal polynomial for $\zeta$ is $\sum_{i=0}^{p-1}x^i$. We also have  $y_0,\dots,y_{p-1} \in \mathbb Z$.
Thus each choice of $0$ or $h$ in the above equation must be the same. Call this choice $\eta$.
Thus we have two possibilities:

\noindent (a)  $\eta=h$ so that $y_0=h,y_1=y_2=\dots=y_{p-1}=0$, or 

\noindent (b)  $\eta=0$, so that 
 $y_0=y_1=y_2=\dots=y_{p-1}=\frac h p$.

If we have (a), then    $\tilde H=hI_p$, so that the equation
$\tilde D \tilde H=\frac h 2 (\tilde G-\tilde H)$ gives 
\begin{align*}
\tilde D=\frac 1 2 (\tilde G-\tilde H)=\frac 1 2 \begin{pmatrix} h^2-h&0&0&\dots &0\\
0&-h&0&\dots&0\\
0&0&-h&\dots &0\\
\vdots&\vdots&\vdots&\ddots&\vdots\\
0&0&0&\dots&-h
\end {pmatrix}.
\end{align*}
Using the expression for $\tilde D$ given in equation (\ref {t50}) we see that the above is expressible as a matrix equation:
\begin{align*}
\tag {5.4}\label {t53}
\begin{pmatrix} 1&1&1&1&\dots&1\\
1&\zeta &\zeta^2&\zeta^3&\dots&\zeta^{p-1}\\
1&\zeta^{2}&\zeta^{4}&\zeta^6&\dots&\zeta^{2(p-1)}\\
\vdots&\vdots&\vdots&\dots&\vdots\\
1&\zeta^{p-1}&\zeta^{2(p-1)}&\zeta^{3(p-1)}&\dots&\zeta^{(p-1)^2}
\end{pmatrix} 
\begin{pmatrix} x_0\\x_1\\x_2\\x_3\\
\vdots\\x_{p-1}
\end{pmatrix} =\frac 1 2 
\begin{pmatrix} h^2-h\\
-h\\
 -h\\
-h\\
\vdots \\
 -h
\end{pmatrix}.
\end{align*}
As before, if we denote the matrix in (\ref {t53}) by $U$, then $U^{-1}=\frac 1 p U^*$, and we obtain
$$x_0=\frac {1}{2p} h(h-p);\,\,\, x_1=x_2=\dots=x_{p-1}=\frac {1} {2p} h^2.
$$
     This concludes consideration of (a). 
    \medskip

    We now show that (b) is not possible.  So assume (b). Then as noted we get  $y_0=y_1=y_2=\dots=y_{p-1}=h/p.$
Thus $ H=\frac h p J_p$, so that $\tilde H=\begin{pmatrix} h&0&0&\dots&0\\
0&0&0&\dots&0\\ \vdots&\vdots&\vdots&\dots&\vdots\\
0&0&0&\dots&0\end{pmatrix}.$

Now from $\tilde D \tilde D^{-1}=\lambda \tilde G +(k-\lambda)$ we get 
\begin{align*}&\tag {5.5}\label{t54}
 \begin{pmatrix}\left  (\sum_{i=0}^{p-1} x_i\right )^2&0&0&\dots\\
 0&(\sum_{i=0}^{p-1} x_i\zeta^i)(\sum_{i=0}^{p-1} x_i\zeta^{-i})&0&\dots\\
 \vdots&\vdots&\vdots&\ddots
 \end{pmatrix}
 \\&=\lambda   \begin{pmatrix} h^2&0&0&\dots\\
 0&0&0&\dots\\
  \vdots&\vdots&\vdots&\ddots
  \end{pmatrix}+
  (k-\lambda) \begin{pmatrix} 1&0&0&\dots\\
 0&1&0&\dots\\
  \vdots&\vdots&\vdots&\ddots
  \end{pmatrix}.
  \end{align*}

Now the $x_i \in \mathbb Z$ and so from the (2,2) entry of (\ref{t54}) we see that (taking the rational part) we have 
\begin{align*}\tag {5.6}\label{t56}
\sum_{i=0}^{p-1}x_i^2=k-\lambda.\end{align*}
Using $\sum_{i=0}^{p-1}x_i=k$, the fact that $x_i \in \mathbb Z,$  and (\ref{t56}) gives 

$$k-\lambda= \sum_{i=0}^{p-1}x_i^2\ge \sum_{i=0}^{p-1}x_i=k,$$ so that  
$\lambda \le 0,$ a contradiction. This proves Theorem \ref {thmfr} (a) (i) and (ii). \qed\medskip 

Since we only have (a) we see that $|N \cap D|=\frac 1 {2p} h(h-p)$, so that in the case $p=2$ we have $|N \cap D|=\frac 1 {4} h(h-4)=\lambda,$ which gives 
Theorem \ref {thmfr}  (b).\qed 

    
\section{The Schur ring and minimal polynomials}

We have $(G-H)^{-1}=G-H, (H-1)^{-1}=H-1,(D^{-1})^{-1}=D$, and so we just need to show that $D,D^{-1},H-1,1$ commute and span the ring that they generate. We have already seen in Lemma \ref {lemdiag}  that they commute.

We have 
$H\cdot G=hG,  D\cdot G=kG=D^{-1}\cdot G.$
 
  Using  equations (3.2) and (3.8)
   we get 
   $$D^2=(k-\lambda)(G-1)-\frac h 2 (G-H).$$

 We collect together the rest of the  products that we need:
\begin{align*}
&
HD=DH=\frac h 2 (G-H);\\& H^2=hH,\\& 
D^2=(k-\lambda)(G-1)-\frac h 2 (G-H)
=(k-\lambda-\frac h 2 )(D+D^{-1})+(k-\lambda) (H-1),\\&
D\cdot D^{-1}=D^{-1}\cdot D=\lambda G+(k-\lambda)1=\lambda D+\lambda D^{-1}+\lambda (H-1)+k1.
\end{align*} 
  Since $k=h(h-1)/2,\lambda=h(h-2)/4, k-\lambda=h^2/4\in \mathbb Z,$ one can check that all the coefficients in the above sums are non-negative integers.
  This proves that $D,D^{-1},H-1,1$ commute and span the ring that they generate. Theorem \ref {thmsring} follows. \qed \medskip

   One can show that
  \begin{align*}&D^3=\frac {h^2}{4}D^{-1}+\left (\frac 1 8 h^4-\frac 3 8 h^3+\frac 1 4 h^2\right )G;\\&
   D^4=\left (\frac 1 {16}h^6-\frac 1 4 h^5+\frac 3 8 h^4-\frac 1 4 h^3\right )G+\frac 1 {16} h^4.
   \end{align*}

  For a matrix or an element $M$ of an algebra we let $\mu(M)$ denote the minimal polynomial of $M$.
  
    To help us find   $\mu(D)$   
  we have the equations 
  \begin{align*}
  &  G=D+D^{-1}+H,\quad DD^{-1}=\lambda G+(k-\lambda),\quad
  DH=\frac h 2 (G-H),\\& 	D^{-1}H=\frac h 2 (G-H),\quad D^2=(k-\lambda)(G-1)-\frac h 2 (G-H).\end{align*}
  Thinking of $D,D^{-1},G,H$ as variables in a polynomial ring over $\mathbb Q(h)$ 
    we can consider the ideal generated by the above elements. This has dimension zero, and calculating a Gr\" obner basis for this ideal  and doing an elimination,   one finds that $D$ satisfies the   polynomial  $(x-k)\left (x+\frac h 2\right  )\left (x^2+\frac {h^2} 4\right )$. Thus $\mu(D)$ divides this polynomial. 
    
    We note that $\frac 1 k D$ is a stochastic matrix, and since  $D^2=(k-\lambda)(G-1)-\frac h 2 (G-H)$ it follows that 
    \begin{lem} \label{lemdstoc} The matrix 
    $\frac 1 k D$ is an  irreducible doubly stochastic matrix.\qed\end{lem}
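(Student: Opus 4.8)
The plan is to verify the two assertions—double stochasticity and irreducibility—separately, reading every matrix identity through the regular representation used throughout this section. For double stochasticity, note that $D$ is the sum of the $k$ distinct permutation matrices $P_d$, $d\in D$, so each of its rows and columns contains exactly $k$ ones. (Alternatively, by Theorem \ref{thm111}(iv) the block $D_{ii}$ vanishes while each of the $h-1$ blocks $D_{ij}$, $j\ne i$, contributes $h/2$ ones to every row and column of its row-block, for a total of $(h-1)h/2=k$.) Hence $\frac1k D$ has nonnegative entries with every row and column summing to $1$.

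For irreducibility I would exploit the displayed identity for $D^2$. Under the regular representation $G=J_{h^2}$, $H=\mathrm{diag}(J_h,\dots,J_h)$, and $1=I_{h^2}$; substituting these together with $k-\lambda=h^2/4$ and $\lambda=h(h-2)/4$ into $D^2=(k-\lambda)(G-1)-\frac h2(G-H)$ yields $D^2=\lambda J_{h^2}+\frac h2 H-\frac{h^2}{4}I_{h^2}$. I would then read off the entries of $D^2$: the $(i,j)$ entry is $0$ when $i=j$; it equals $\lambda+\frac h2=\frac{h^2}{4}>0$ when $g_i,g_j$ lie in the same coset with $i\ne j$; and it equals $\lambda=\frac{h(h-2)}{4}$ when they lie in different cosets. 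For $h\ge 4$ every off-diagonal entry is therefore strictly positive, so for each $i\ne j$ there is a directed path $i\to\ell\to j$ of length two in the digraph of $D$; this makes that digraph strongly connected, which is exactly irreducibility of $D$, and scaling by $1/k$ preserves the zero pattern.

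The remaining—and only delicate—case is $h=2$, where $\lambda=0$, so the different-coset entries of $D^2$ vanish and the two-step argument fails. Here $k=1$, so $D=\{g\}$ is a single element with $g\ne g^{-1}$ (as $D\cap D^{-1}=\emptyset$); since $|G|=4$ this forces $g$ to have order $4$ and $G=\langle g\rangle\cong\mathcal C_4$, whence $\frac1k D=P_g$ is the regular matrix of a generator, i.e.\ a single $4$-cycle, which is irreducible. Thus the main obstacle is not the generic case but recognizing and disposing of this boundary case; conceptually, packaging both uniformly amounts to the group-theoretic fact that $\langle D\rangle=G$ (equivalently, that $\mathrm{Cay}(G,D)$ is strongly connected), which the $D^2$ identity certifies for $h\ge 4$ and the explicit description handles for $h=2$.
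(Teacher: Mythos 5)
Your proof is correct and takes essentially the same route as the paper: the paper's own (one-line) argument likewise observes that $\frac1k D$ is stochastic because its row and column sums equal $k$, and cites precisely the identity $D^2=(k-\lambda)(G-1)-\frac h2(G-H)$ as the source of irreducibility, which is exactly the positivity-of-off-diagonal-entries / two-step-path argument you spelled out. Your separate treatment of the degenerate case $h=2$ (where $\lambda=0$, so the generic argument fails and one must note directly that $G\cong\mathcal C_4$ and $D$ is a $4$-cycle) is a detail the paper silently omits, so your write-up is, if anything, more complete.
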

 
 Further, we know that $\mu(D)$  factors as a product of distinct linear factors $(x-\kappa)$, where $\kappa$ is an eigenvalue (since $D$ is diagonalizable by Lemma \ref {lemdiag}).

 Next we note that $k$ is an eigenvalue of $D$, since each row sum and column sum  of $D$ is $k$.
 Next we show that $-h/2$ is an eigenvalue of $D$: for $g \notin H$ we have $H-Hg \ne 0$ and 
 \begin{align*}  
D\cdot(  H-Hg)&=DH(1-g)=\frac h 2 (G-H)(1-g)\\
&=\frac h 2 (G-H-G+Hg)=-\frac h 2 (H-Hg).
\end{align*}
Thus $-\frac h 2$ is an eigenvalue for $D$.

Since $D$ is a matrix with real entries it follows that  the eigenspaces for eigenvalues $\pm ih/2$ have  the same dimension, and that
either $\mu(D)=(x-k)(x+h/2)$ or $\mu(D)=(x-k)(x+h/2)(x^2+\frac {h^2}{4}).$ 
If $\mu(D)=(x-k)(x+h/2)$, then, since $D$ is diagonalizable, Lemma \ref {lemdstoc} and the Perron Frobenius theorem show that $D$ has eigenvalue $k$ with multiplicity one, and 
$-h/2$ with multiplicity $h^2-1$. Now, since $D \cap H =\emptyset,$ we see that  $D$ has trace zero. Thus we must have
$$k+(h^2-1)(-h/2)=0,$$ but the lefthand side of this expression is $-h^2(h-1)$, which gives a contradiction. Thus  $\mu(D)=(x-k)(x+h/2)(x^2+\frac {h^2}{4}).$ In fact it easily follows from Trace$(D)=0$  that the eigenvalue $-h/2$ has multiplicity $h-1$. \qed\medskip 

This  gives a proof of Theorem \ref {thmminpoly}.\qed  \medskip

  Using the above ideal it is easy to show that $G \setminus D$ is another difference set, where
  $$(G-D)(G-D)^{-1}=(G-D)(G-D^{-1})=\left (\frac 1 4 h^2+\frac 1 2 h\right ) G+\frac 1 4 h^2.$$
  
  Further, we have
  \begin{align*}
  (2D-G)(2G-D)^T&=(2D-G)(2G-D^{-1})\\&=4DD^{-1}-2DG-2GD^{-1}-G^2\\&=4\lambda G-2kG-2kG+h^2G=h^2,
  \end{align*}
  as required for a part of
  Theorem \ref{thmhad}. As we have seen $D,G$ are simultaneously diagonalizable, and
  $2D-G$ is similar to
  \begin{align*}
  &\mathrm{diag}(2k,-h,-h,\dots,ih,\dots,ih,-ih,\dots,-ih)-\mathrm{diag}(h^2,0,\dots,0)\\
  &=\mathrm{diag}(-h,-h,\dots,ih/2,\dots,ih/2,-ih/2,\dots,-ih/2),
  \end{align*}
  and it follows that $\mu(2D-G)=(x+h)(x^2+h^2)$. This gives   Theorem \ref{thmhad}.\qed

   \section {Examples of relative skew Hadamard difference set groups }
   
  The groups $\mathfrak G_{n,k}$ have been defined in the introduction. We now show that they are relative skew Hadamard difference set groups.
   We let 
   $$H=\langle b_1,b_2,\dots,b_n\rangle.$$
Then a transversal for $H$ in $G$ is the set of products $a_{i_1}a_{i_2}\cdots a_{i_u}$, where these are indexed by the sequences $i_1<i_2<\cdots<i_u$ of $1,2,\dots,n$, or in other words, indexed by the subsets $X=\{i_1,i_1,\dots,i_u\}$ of $\{1,2,\dots,n\}$. We let $a_X=a_{i_1}a_{i_2}\cdots a_{i_u}$ denote the corresponding element of $G$. Here $a_\emptyset=1$.
   We may also employ a similar notation for the elements $b_X=b_{i_1}b_{i_2}\cdots b_{i_u}$.

We note that for any $g \in G$ we have $g^2 \in H$.
We are interested in  the hypothesis
\medskip 

\noindent (H1): there is a set of distinct maximal subgroups $M_1,\dots,M_{2^n-1}$  of $H$, and  an ordering $S_1,\dots,S_{2^n-1}$ of the non-empty subsets of $\{1,\dots,n\}$ so that $a_{S_i}^2 \notin M_i$.
\medskip

\begin{prop} \label{lemgnkh1} The groups  $\mathfrak G_{n,k}$ satisfy (H1).
\end{prop}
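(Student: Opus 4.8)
The plan is to compute each square $a_X^2\in H$ explicitly and then to reformulate (H1) as the existence of a perfect matching in a bipartite graph, which the marriage theorem will supply.

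First I would record that, since the $b_i$ are central involutions, $H=\langle b_1,\dots,b_n\rangle$ is elementary abelian of order $2^n$ with $b_1,\dots,b_n$ a basis; I identify $H$ with the vector space $\mathbb{F}_2^n$ over the two-element field. Its maximal subgroups are exactly the $2^n-1$ hyperplanes, namely the kernels $\ker\phi$ of the $2^n-1$ nonzero linear functionals $\phi\colon H\to\mathbb{F}_2$, and the condition $a_X^2\notin\ker\phi$ is just $\phi(a_X^2)=1$. A collection process --- moving the second copy of $a_1$ leftward using $a_ja_1=a_1a_jb_{j-1}$ for $2\le j\le k+1$, together with the centrality of the $b_i$ and $a_i^2=b_{i+k}$ --- then gives, writing $H$ additively and all indices modulo $n$,
\[
v_X:=a_X^2=\sum_{i\in X}b_{i+k}\;+\;\delta_X\sum_{\substack{i\in X\\ 2\le i\le k+1}}b_{i-1},
\]
where $\delta_X=1$ if $1\in X$ and $\delta_X=0$ otherwise (for $1\notin X$ all factors of $a_X$ commute, so only the first sum remains).

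The crux --- and the step I expect to be the main obstacle --- is to show that $X\mapsto v_X$ is a bijection from the subsets of $\{1,\dots,n\}$ onto $H$; since $v_\emptyset=0$ this restricts to a bijection from the nonempty subsets onto the $2^n-1$ nonzero vectors. I would prove injectivity by recovering $X$ from $v_X$. The \emph{uncorrected} coordinates $b_{k+1},\dots,b_n$ of $v_X$ read off membership of $1,2,\dots,n-k$ in $X$ (in particular whether $1\in X$, which governs the correction term). If $1\notin X$ the map is just the invertible coordinate shift $b_i\mapsto b_{i+k}$, and we are done. If $1\in X$, the coordinate of $v_X$ at $b_m$ for $1\le m\le k$ expresses membership of the index $\beta=m-k+n\in\{n-k+1,\dots,n\}$ in terms of membership of $\beta-s$, where $s=n-k-1$. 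Since $k<n-1$ forces $s\ge1$, this dependency strictly decreases the index, so an induction in increasing $\beta$ --- the smallest values reducing to the already-known block $\{1,\dots,n-k\}$ --- recovers all of $X$. This is precisely what handles the awkward regime $2k\ge n$, where the uncorrected coordinates alone are insufficient.

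Finally I would deduce (H1). Consider the bipartite graph $\Gamma$ with left vertices the nonempty subsets $X$, right vertices the nonzero functionals $\phi$, and an edge $X\sim\phi$ whenever $\phi(v_X)=1$; a perfect matching assigns to the subsets pairwise distinct maximal subgroups $M_X=\ker\phi_X$ with $a_X^2=v_X\notin M_X$, which is exactly (H1) after relabelling. To obtain the matching I would check Hall's condition: for a family $\mathcal S$ of nonempty subsets, injectivity makes $W=\{v_X:X\in\mathcal S\}$ a set of $|\mathcal S|$ distinct nonzero vectors whose span has some dimension $d$, and the neighbourhood of $\mathcal S$ consists of the nonzero $\phi$ not vanishing identically on $W$; the complementary nonzero functionals annihilate the $d$-dimensional span of $W$ and so number $2^{n-d}-1$, leaving $2^n-2^{n-d}=2^{n-d}(2^d-1)\ge 2^d-1\ge|\mathcal S|$ in the neighbourhood. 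Equivalently, through the bijection of the previous paragraph $\Gamma$ is isomorphic to the $2^{n-1}$-regular bipartite graph pairing nonzero vectors with nonzero functionals by $\phi(v)=1$, and a regular bipartite graph always has a perfect matching. Either route completes the proof.
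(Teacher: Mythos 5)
Your proposal is correct, and its first half is essentially the paper's own argument: the paper derives the same square formula (equation (\ref{t71}), $a_S^2=b_{K_1-1}b_{S+k}$ with $K_1=S\cap\{2,\dots,k+1\}$ when $1\in S$), uses the coefficient of $b_{1+k}$ as the marker for whether $1\in S$ (Lemma \ref{lemfff}(i)), and proves injectivity of $S\mapsto a_S^2$ by showing that the $\mathbb F_2$-linear map you invert, written there as $P^{-1}A+P^{k}$, is nonsingular --- column-reducing $A+P^{k+1}$ to $P^{k+1}$, with the hypothesis $k<n-1$ entering exactly where your $s=n-k-1\ge 1$ does; your coordinate-recovery induction is just an explicit inverse of that same map, so the difference there is only presentational. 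The genuine divergence is the pairing step. The paper reduces (H1) to producing a bijection $\mu:V^\times\to V^\times$ with $u\cdot\mu(u)=1$ for all $u$, and constructs one explicitly as an involution (Lemma \ref{lemmu}: if the last nonzero coordinate of $v$ sits in position $j$, set $\mu(v)=(\underline{1}_{j-1},0)-v$). You instead obtain the assignment non-constructively, as a perfect matching in the bipartite graph pairing nonzero vectors with nonzero functionals via $\phi(v)=1$; both your Hall verification ($|N(\mathcal S)|=2^{n-d}(2^d-1)\ge 2^d-1\ge|\mathcal S|$, and equality of the two part sizes upgrades a saturating matching to a perfect one) and the $2^{n-1}$-regularity shortcut are sound. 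What each buys: your route is shorter and more robust --- it applies verbatim to any injective assignment of squares to nonzero vectors, with no cleverness needed --- while the paper's explicit involution actually exhibits the correspondence $S_i\leftrightarrow M_i$, hence an explicit difference set $D=\sum_i a_{S_i}M_i$ in Proposition \ref{lll}, and the same trick recurs in Proposition \ref{propex2}, where an involution $\kappa$ with $v\cdot\kappa(v)=0$ is built.
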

\noindent {\it Proof}
 We  first show that the squares of the coset representatives $a_S, S \subseteq \{1,2,\dots,n\}$, are distinct.
 
 We   note that the subgroup $J=\langle a_2,a_3,\dots,a_n\rangle$ is isomorphic to $\mathcal C_4^{n-1}$. We also have $J \triangleleft \mathfrak G_{n,k}$, so that
 $\mathfrak G_{n,k}=J\rtimes \langle a_1\rangle=J \rtimes \mathcal C_4$. 
 
 If $S \subseteq \{1,2,\dots,n\}$ and $ m \in \mathbb Z$ we let $S+m$ be the set $\{u+m:u \in S\}$, where we take numbers mod $n$ so that $S+m \subseteq \{1,2,\dots,n\}$.
 
 Now for a coset representative $a_S, S=\{i_1,i_2,\dots,i_u\} \subseteq \{2,\dots,n\},$ we have $a_S \in J$ and so
from the relations in $\mathfrak G_{n,k}$ we have 
 $$a_S^2=b_{i_1+k}b_{i_2+k}\dots b_{i_u+k}=b_{S+k}.$$
 We note that in this situation, since $1 \notin S$, we have $1+k \notin S+k$. 

Now for a coset representative $a_S$ that is not in $J$ we can write $S=\{1,i_1,i_2,\dots,i_u\}$, where $a_{S \setminus \{1\}}\in J$.  
  So if we let $K=S \setminus \{1\}$, then $a_S=a_1a_K$.
  
  Now write $K=K_1\cup K_2$, where the elements  $a_m,m \in K_2,$ commute with $a_1$, and those  $a_m,m \in K_1,$ do not. 
  Note that   
  $$K_1\subseteq \{2,\dots,k+1\}, \,\,\,\, K_1 \cap K_2=\emptyset,\,\,\,\,  S=\{1\} \cup K_1 \cup K_2.$$
  
  
  Then from the relations in $\mathfrak G_{n,k}$ we have:
  $$a_{K_2}^{a_1}=a_{K_2},\quad a_{K_1}^{a_1}=a_{K_1}b_{K_1-1}.$$
  Thus we have
  \begin{align*}a_S^2=
  (a_1a_{K_1}a_{K_2})^2&=a_1^2a_{K_1}^{a_1}a_{K_1}a_{K_2}^2\\
  &=b_{1+k}\cdot a_{K_1}b_{K_1-1}\cdot a_{K_1}\cdot a_{K_2}^2\\
  &\tag {7.1}\label {t71} =b_{1+k}b_{K_1-1}b_{K_1+k}b_{K_2+k}=b_{K_1-1}b_{S+k}.
  \end{align*}
  
  We next show that $b_{1+k}$ has non-zero exponent in     (\ref{t71}). But from the above we know that $K_1 \subseteq \{2,3,\dots,k+1\},$ so that $1+k \notin K_1-1.$
  If $1+k \in K_i+k, i=1,2,$ then $1 \in K_i,$ a contradiction.  This shows that $b_{1+k}$
  has non-zero exponent in    (\ref{t71}).
  
  Note that in the above we have also shown (i) of  
  
  \begin{lem} \label{lemfff}   With the above definitions we have:
  
  \noindent (i) the element  $b_{1+k}$ occurs with non-zero coefficient in $a_S^2$ if and only if $1 \in S$. 
  
\noindent   (ii) The squares of the coset representatives $a_S, S \subseteq \{1,2,\dots,n\}$, where $1 \in S,$ are distinct.
  \end{lem}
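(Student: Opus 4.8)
Part (i) has already been established in the computation leading to (\ref{t71}), so the task is to prove (ii). The plan is to record the exponent of each generator $b_j$ in $a_S^2$ as an $\mathbb{F}_2$-linear functional of the indicator vector of $S$, and then to show that this collection of functionals separates the sets $S$ with $1\in S$. Writing $a_S^2=b_{K_1-1}b_{S+k}$ as in (\ref{t71}), with $K_1=S\cap\{2,\dots,k+1\}$, the exponent modulo $2$ of $b_j$ in $a_S^2$ is
$$\varepsilon_j(S)=\#\{m\in S:\ m+k\equiv j\}+\#\{m\in S\cap\{2,\dots,k+1\}:\ m-1=j\},$$
and each of the two counts is additive modulo $2$ in $S$. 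Hence if $1\in S$, $1\in S'$ and $a_S^2=a_{S'}^2$, then, putting $E=S\triangle S'$ and $\chi=\mathbf 1_E\colon\{1,\dots,n\}\to\mathbb{F}_2$ (note $1\notin E$, since $1\in S\cap S'$), additivity gives, for every $j$ and with indices read mod $n$,
$$\chi(j-k)+[\,1\le j\le k\,]\,\chi(j+1)=0,$$
where $[\,P\,]$ is $1$ or $0$ according as $P$ holds or fails. It then suffices to deduce $\chi\equiv 0$, i.e. $E=\emptyset$ and $S=S'$.

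Next I would separate this system according to whether the correction term is present. For $j\in\{k+1,\dots,n\}$ the bracket vanishes, giving $\chi(j-k)=0$; as $j-k$ ranges over $\{1,\dots,n-k\}$ this shows $\chi(i)=0$ for all $i\le n-k$. For $j\in\{1,\dots,k\}$ one gets $\chi(j-k)=\chi(j+1)$; writing $r=n-k-1\ (\ge 1)$ and $t=j-k+n$, so that $t$ ranges over $\{n-k+1,\dots,n\}$ and $j+1=t-r\in\{2,\dots,k+1\}$, this becomes $\chi(t)=\chi(t-r)$ for every $t\in\{n-k+1,\dots,n\}$.

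Finally I would conclude by induction on $t$. The first family gives $\chi(i)=0$ for $i\le n-k=r+1$, and the second gives $\chi(t)=\chi(t-r)$ for $t\ge r+2$, where $1\le t-r<t$ since $r\ge 1$. Processing $t$ in increasing order, $\chi(t-r)$ has already been shown to vanish (by the first family if $t-r\le n-k$, by the inductive hypothesis otherwise), whence $\chi(t)=0$; thus $\chi\equiv 0$ and $S=S'$. The one genuine subtlety, which this argument is designed to absorb, is that the correction positions $K_1-1\subseteq\{1,\dots,k\}$ can coincide with the wrap‑around images of the large indices of $S$ under $m\mapsto m+k$ (precisely when $2k+1>n$), so one cannot simply read $K_1$ off the low‑index coordinates of $a_S^2$; the reindexed relation $\chi(t)=\chi(t-r)$ is exactly what lets the induction propagate the vanishing of $\chi$ through this overlap, uniformly in $n$ and $k$.
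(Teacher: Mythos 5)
Your proof is correct, and it rests on the same linearization as the paper's: the exponent vector of $a_S^2=b_{K_1-1}b_{S+k}$ is an $\mathbb{F}_2$-linear function of the indicator vector of $S$, so distinctness of the squares is exactly injectivity of that linear map. Where you genuinely differ is in how injectivity is established. The paper encodes the map as the matrix $P^{-1}A+P^{k}$ (with $A=\mathrm{diag}(0_1,I_k,0_{n-k-1})$ and $P$ the cyclic shift), reduces to nonsingularity of $A+P^{k+1}$, and disposes of that by a column-reduction argument: the $I_k$ block of $P^{k+1}$ sits to the right of the diagonal and is used to cancel the $I_k$ block of $A$, leaving $P^{k+1}$. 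You instead compute the kernel of the dual system of equations $\chi(j-k)+[\,1\le j\le k\,]\chi(j+1)=0$: the equations without correction term kill $\chi$ on $\{1,\dots,n-k\}$, and the rest give the recursion $\chi(t)=\chi(t-r)$ with $r=n-k-1\ge 1$, which collapses under an upward induction on $t$. These are the same elimination in different clothing, but your version is careful exactly where the paper is slick: when $2k+1>n$ the column range $\{2,\dots,k+1\}$ of $A$ overlaps the column range $\{n-k+1,\dots,n\}$ of the $I_k$ block of $P^{k+1}$, so the columns one adds in during the paper's reduction are themselves contaminated by $A$ and the reduction must be iterated; the descent $t\mapsto t-r$ in your induction is precisely that iteration made explicit, and it also makes visible where the hypothesis $k<n-1$ (i.e.\ $r\ge 1$) enters. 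So your argument buys a uniformly valid, easily checkable treatment of the wrap-around case at the cost of more notation, while the paper's buys brevity. One small remark: your parenthetical that $1\notin E$ is never used; what matters is only that $1\in S\cap S'$, so that the formula (7.1) applies to both squares with the same linear form.
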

  \noindent {\it Proof} (ii) We need to show that the map $S \mapsto b_{K_1-1}b_{S+k}$ is injective.
  
  We represent $S$ as a (column) vector $v_S \in V=\mathbb F_2^n$, where the $i$th coordinate of $v_S$ is $1$ if and only if $i \in S$. Then the action on $V$ of replacing $S$ by $S+1$ is determined by the $n \times n$  permutation matrix
  $$P=\begin{pmatrix} 0&0&\dots&0&1\\
  1&0&\dots&0&0\\
  0&1&\dots&0&0\\
  \vdots&\vdots&\ddots&\vdots&\vdots\\
  0&0&\dots&1&0
  \end{pmatrix}.
  $$
  Thus for any $i \in \mathbb Z$ we have 
  $$v_{S+i}=P^iv_S.$$
  Let $0_{k,m}$ denote the $k  \times m$ zero matrix, and let $0_k=0_{k,k}$. If $k\le 0$ or $m \le 0$, then $0_{k,m}$ will denote the empty matrix. 
  Then, the map $S \mapsto K_1$, is determined by the $n \times n$ matrix 
  $$A=\mathrm{diag}(0_{1},I_k,0_{n-k-1}),$$ so that $$v_{K_1}=  Av_S.$$
   Thus the map $S \mapsto b_{K_1-1}b_{S+k}$ is represented by the matrix 
  $P^{-1}A+P^k,$ and  we will be done if we can show that $P^{-1}A+P^k$ is a non-singular matrix in GL$(2,\mathbb F_2)$.  But this is the same as showing that $A+P^{k+1}$ is non-singular, where
   \begin{align*}  &A+P^{k+1}=\\& \tag {7.2} \label{t72} \begin{pmatrix}
  0_1&0_{1,k}&0_{1,n-k-1}\\
  0_{k,1}&I_k&0_{k,n-k-1}\\
  0_{n-k-1,1}&0_{n-k-1,k}&0_{n-k-1}
  \end{pmatrix}+
  \begin{pmatrix} 0_{1,n-k-1}&1&0_{1,k}\\
  0_{k,n-k-1}&0_{k,1}&I_k\\
  I_{n-k-1}&0_{n-k-1,1}&0_{n-k-1,k}                   \end{pmatrix}.
    \end {align*}
  We note that since $k<n-1$ the second matrix is not a diagonal matrix, and that the submatrix  $I_k$ in the second matrix of   equation (\ref {t72})  occurs to the right of the diagonal. (This shows that $A+P^{k+1}$ is singular when $k=n-1$.)
  Thus  the $I_k$ in the second matrix of     equation (\ref {t72}) can be used to column-reduce the $I_k$ in the first matrix to zero. This shows that 
  $A+P^{k+1}$ column-reduces to $P^{k+1}$, which is non-singular, and we are done.\qed\medskip 
  
  Let $V^\times=\mathbb F_2^n \setminus \{0\}$. Then non-empty subsets of $S$ correspond bijectively to  elements of $V^\times$, as explained above.
  Further, maximal subgroups of $H$ correspond to subspaces of $V$ of dimension $n-1$, which, in turn,  are determined by elements of $V^\times$: a vector $v \in V^\times $ determines the subspace $M_v=\{u \in V|u\cdot v=0\}$, where $\cdot $ is the usual dot-product on $V$ taking values in $\mathbb F_2$.  Since $V$ is a vector space over $\mathbb F_2$ the correspondence $v \leftrightarrow M_v$ is bijective. Further, given a maximal subgroup (or subspace) $M$ we let $v_M$ denote the corresponding vector.
  
  Thus the correspondence  of subsets with maximal subgroups that we require is $S \leftrightarrow M_S$ where $v_S \leftrightarrow v_{M_S}$, with 
  $v_S \notin M_S$ i.e.  $v_S \cdot v_{M_S}=1$. 
  But this correspondence  determines  a function
 $$\mu:V^\times \to V^\times, \text { where } v_u\cdot v_{\mu(u)}=1 \text { for all } u \in V^\times.$$ 
 Conversely, such a function determines the correspondence that we want.
  We now show how to construct such a function:
  
   \begin{lem} \label{lemmu}   
  For all $n\in \mathbb N, V=\mathbb F_2^n,$  there is a function $\mu:V^\times \to V^\times$ such that 
  $u\cdot {\mu(u)}=1 $ for all $ u \in V^\times.$
  \end{lem}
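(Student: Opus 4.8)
The plan is to upgrade the bare existence statement into the construction of a \emph{bijection} $\mu$ of $V^\times$, since that is what the correspondence $S\leftrightarrow M_S$ in (H1) actually needs: distinct maximal subgroups force $\mu$ to be injective, and injectivity of a self-map of the finite set $V^\times$ is the same as bijectivity. I would recast the problem as a matching problem. Form the bipartite graph $\Gamma$ whose two vertex classes are each a copy of $V^\times$, and in which $u$ on the left is joined to $v$ on the right exactly when $u\cdot v=1$. A function $\mu:V^\times\to V^\times$ with $u\cdot\mu(u)=1$ for all $u$ is precisely a choice of one neighbour for each left vertex, and such a $\mu$ is a bijection if and only if the chosen edges form a perfect matching of $\Gamma$.

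Next I would check that $\Gamma$ is regular. Fix $0\ne u\in V$; the map $v\mapsto u\cdot v$ is a nonzero linear functional on $V$, so its kernel is a hyperplane of size $2^{n-1}$ and the fibre $\{v: u\cdot v=1\}$ is the complementary coset, again of size $2^{n-1}$. Since $u\cdot 0=0\ne 1$, this fibre lies entirely inside $V^\times$, so $u$ has exactly $2^{n-1}$ neighbours. By symmetry of the dot product the same count holds for each right vertex, so $\Gamma$ is a $2^{n-1}$-regular bipartite graph whose two vertex classes have equal size $|V^\times|=2^{n}-1$.

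Finally I would invoke the standard fact that a nonempty regular bipartite graph with balanced vertex classes has a perfect matching; concretely, regularity makes Hall's condition automatic, because for any set $A$ of left vertices the $2^{n-1}|A|$ edges leaving $A$ land in the neighbourhood $N(A)$, each right vertex absorbing at most $2^{n-1}$ of them, whence $|N(A)|\ge|A|$. Since $n\ge 1$ gives degree $2^{n-1}\ge 1$, the graph $\Gamma$ has a perfect matching, and reading this matching as a function yields the desired $\mu$ (which is in particular the function the lemma asks for).

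I do not expect a genuine obstacle here: once the statement is rephrased as a matching in $\Gamma$, the only ingredients are the degree computation and Hall's theorem for regular bipartite graphs. The one point to keep straight is that $0$ must be excluded from the relevant fibres, which is exactly why $\{v:u\cdot v=1\}$ (and not $\{v:u\cdot v=0\}$) is the correct object and why every vertex degree equals $2^{n-1}\ge 1$; this is what guarantees both that $\mu$ maps into $V^\times$ and that $\Gamma$ is nonempty.
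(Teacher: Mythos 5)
Your argument is correct, but it takes a genuinely different route from the paper's. The paper proves the lemma by exhibiting an explicit involution: if the last nonzero coordinate of $v=(v_1,\dots,v_n)\in V^\times$ sits in position $k$, it sets $\mu(v)=(\underline{1}_{k-1},0)-v$, where $(\underline{1}_{k-1},0)$ has $k-1$ ones followed by zeros; since $v_m=0$ for $m>k$ one gets $v\cdot\mu(v)=v_k=1$, and since $\mu(v)$ has the same last-nonzero position $k$, applying $\mu$ twice returns $v$, so $\mu$ is an involution and in particular a bijection. You instead encode the problem as a perfect matching in the bipartite graph on two copies of $V^\times$ with $u$ adjacent to $v$ exactly when $u\cdot v=1$, show this graph is $2^{n-1}$-regular (each fibre $\{v:u\cdot v=1\}$ being a coset of a hyperplane that avoids $0$), and conclude by Hall's theorem. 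Both routes are sound. A virtue of your framing is that it makes explicit something the lemma's statement elides: the application to (H1) needs the maximal subgroups $M_i$ to be \emph{distinct}, i.e.\ needs $\mu$ injective, and a perfect matching gives bijectivity for free, whereas the paper secures this only implicitly through the fact that its $\mu$ is an involution. What you give up is constructiveness: the paper's $\mu$ is written in closed form, convenient both for hand verification and for actually building the difference sets (the authors' examples are computer-generated), while Hall's theorem only asserts that some matching exists. Your argument is also the more robust one, since it uses nothing about the dot product beyond symmetry and the fact that the sets $\{v:u\cdot v=1\}$ all have the same nonzero cardinality, so it would survive replacing the standard dot product by any symmetric pairing with that property.
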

  \noindent {\it Proof}   We will show that there is a function $\mu$ that is an involution i.e. where we have $\mu(\mu(v))=v$ for all $v \in V^\times$. 
  For $0 \le k\le n$ we let $$(\underline {1}_k,0)=(1,1,1,\dots,1,0,\dots,0) \in V^\times,$$ where there are $k$ $1$s (so for $k=0$ we have the zero vector of $V$). 
  
  Write $v \in V^\times $ as  $v=(v_1,v_2,\dots,v_n), v_i \in \mathbb F_2$. 
  If $1\le k\le n$ where $v_k=1$ and $v_m=0$ for $k+1\le m\le n$, then we let 
  $$\mu(v)=(\underline {1}_{k-1},0)-v,$$ This  satisfies $\mu(v)\cdot v=1$, as required. Further, since the same $k$ works for $\mu(v)$, we have 
  $$\mu(\mu(v))=
  (1_{k-1},0)-((1_{k-1},0)-v)=v.$$
This defines a function $\mu$ that is an involution.\qed\medskip
  
Lemma \ref {lemmu}  determines the pairing for hypothesis (H1)  for the groups $\mathfrak G_{n,k}$, and concludes the proof of Proposition \ref {lemgnkh1}.\qed\medskip

We will next show 
\begin{prop} \label{lll} 
 The groups $\mathfrak G_{n,k}$ are relative skew Hadamard difference set groups.
\end{prop}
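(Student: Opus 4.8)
The plan is to construct an explicit difference set $D$ inside $\mathfrak{G}_{n,k}$ witnessing the relative skew Hadamard property, using the combinatorial data assembled in Proposition \ref{lemgnkh1}. First I would record the structural facts we have available: $H = \langle b_1,\dots,b_n\rangle \cong \mathcal{C}_2^n$ is central, $|H| = 2^n$, the transversal $\{a_S : S \subseteq \{1,\dots,n\}\}$ gives $|G| = 2^{2n} = |H|^2$, and every $g \in G$ satisfies $g^2 \in H$. Since $H$ has index $h = 2^n$ and $|G| = h^2$, this matches the parameter constraints of Lemma \ref{lem1}. The essential observation is that for a coset representative $a_S$ with $S \neq \emptyset$, the square $a_S^2 \in H$ is a specific element $b_{K_1-1}b_{S+k}$ (from equation (\ref{t71})), and Lemma \ref{lemfff}(ii) guarantees these squares are distinct, while the pairing $\mu$ of Lemma \ref{lemmu} assigns to each $S$ a maximal subgroup $M_S$ of $H$ with $a_S^2 \notin M_S$.

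Next I would use this pairing to build $D$ coset by coset. For each nonempty $S$, the maximal subgroup $M_S < H$ has index $2$, so $H = M_S \sqcup a_S^2 M_S$. The idea is to place exactly one of the two half-cosets $a_S M_S$ or $a_S a_S^2 M_S = a_S^3 M_S$ into $D$, choosing so that $D$ and $D^{-1}$ split each nontrivial coset $a_S H$ complementarily and disjointly. Concretely, because $(a_S M_S)^{-1}$ lands in the coset $a_S^{-1}H = a_S H$ (as $a_S^2 \in H$ is central), inversion acts on the two halves of $a_S H$, and the condition $a_S^2 \notin M_S$ is exactly what forces inversion to swap the two cosets $a_S M_S$ and $a_S^3 M_S$ rather than fix them. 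This makes $D \cap D^{-1} = \emptyset$ automatic and gives $|D \cap a_S H| = |M_S| = h/2$ for every nonempty $S$, matching Theorem \ref{thm111}(iii). Taking $D$ to be the union of the chosen halves over all nonempty $S$ yields $G = D \sqcup D^{-1} \sqcup H$ with $|D| = (2^n-1)\cdot 2^{n-1} = \frac{1}{2}h(h-1) = k$, as required.

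The remaining and decisive step is to verify the difference-set equation $DD^{-1} = \lambda G + (k-\lambda)\cdot 1$ with $\lambda = \frac{1}{4}h(h-2)$. I would approach this in the group algebra $\mathbb{Q}G$, exploiting centrality of $H$: writing $D = \sum_{S \neq \emptyset} a_S^{\epsilon_S} M_S$ where $\epsilon_S \in \{1,3\}$ encodes the choice, the product $DD^{-1}$ decomposes into contributions $a_S^{\epsilon_S} M_S \cdot (a_T^{\epsilon_T} M_T)^{-1}$. Since each $M_S$ is a subgroup and $M_S M_S = (h/2)M_S$, the diagonal terms $S = T$ contribute a multiple of $M_S$, and centrality of $H$ lets me track which coset of $H$ each cross-term lands in. The computation should reduce to a counting argument showing every nontrivial group element arises exactly $\lambda$ times; I expect this to follow from the uniform $h/2$ splitting together with the fact that the $a_S^2$ generate $H$ in a controlled way.

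The hard part will be this final verification of the $\lambda$-regularity, since it requires controlling the cross-terms $a_S^{\epsilon_S} a_T^{-\epsilon_T}$ for $S \neq T$ and showing the choices $\epsilon_S$ are globally consistent. The involution property $\mu(\mu(v)) = v$ from Lemma \ref{lemmu} is likely crucial here: it ensures the pairing is symmetric, so the contribution from the pair $(S,T)$ is compatible with that from $(T,S)$ under inversion, which is what should make the off-diagonal counts uniform. I anticipate that rather than a direct element-by-element count, the cleanest route is to verify the identity via the regular representation and the block structure of Theorem \ref{thm111}(iv) — confirming each block $D_{ST}$ is either zero or a permutation-like $0,1$ matrix with $h/2$ ones per row and column — and then invoke the already-established minimal polynomial and eigenvalue data to conclude that $D$ genuinely satisfies the difference-set equation rather than merely the necessary parameter conditions.
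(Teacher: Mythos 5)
Your setup is sound and essentially matches the paper's construction: you take $D=\sum_{S\neq\emptyset}a_SM_S$ using the pairing supplied by Proposition \ref{lemgnkh1}, and your observation that $a_S^2\notin M_S$ forces $(a_SM_S)^{-1}=a_S(H-M_S)$, hence $D\cap D^{-1}=\emptyset$, $G=D\cup D^{-1}\cup H$ and $|D|=(2^n-1)2^{n-1}=k$, is exactly right. (The extra freedom $\epsilon_S\in\{1,3\}$ you introduce is harmless but unnecessary: choosing $a_S^3M_S=a_S(H-M_S)$ merely swaps the roles of $D$ and $D^{-1}$ on that coset.)

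The genuine gap is in what you yourself call the decisive step. You leave the verification of $DD^{-1}=\lambda G+(k-\lambda)\cdot 1$ at the level of ``should reduce to a counting argument,'' and your proposed fallback --- invoking the block structure of Theorem \ref{thm111}(iv) together with the minimal polynomial and eigenvalue data of Theorem \ref{thmminpoly} --- is circular: those results are proved \emph{about} sets already assumed to be difference sets, so they furnish only necessary conditions. A set meeting every nontrivial coset of $H$ in $h/2$ points with $D\cap D^{-1}=\emptyset$ need not satisfy the difference-set identity, so nothing short of a direct computation will do. That computation rests on two facts your sketch never isolates. First, for distinct maximal subgroups $M_i\neq M_j$ of $H\cong\mathcal C_2^n$ one has $M_iM_j=2^{n-2}H$ in the group algebra (and likewise for any coset of $M_i$ times any coset of $M_j$), so every cross term with $i\neq j$ flattens to $2^{n-2}a_{S_i}a_{S_j}H$; since in $G/H\cong\mathcal C_2^n$ each nontrivial element is a product of two distinct nontrivial elements in exactly $2^n-2$ ways, each element of each nontrivial coset of $H$ acquires coefficient $2^{n-2}(2^n-2)=\lambda$. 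Second, the diagonal terms contribute $2^{n-1}\sum_i(H-a_{S_i}^2M_i)=2^{n-1}\sum_iM_i$, where the equality $H-a_{S_i}^2M_i=M_i$ is precisely where the hypothesis $a_{S_i}^2\notin M_i$ enters; and since $M_1,\dots,M_{2^n-1}$ exhaust the maximal subgroups of $H$ and each nontrivial element of $H$ lies in exactly $2^{n-1}-1$ of them, each nontrivial element of $H$ acquires coefficient $2^{n-1}(2^{n-1}-1)=\lambda$ as well. These two counts are the substance of the paper's proof; without them your argument establishes only the necessary structure of $D$ (the partition and the $h/2$ splitting), not the difference-set equation.
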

\noindent {\it Proof} We first note that since $b_1,\dots ,b_n$ are central idempotents, all the maximal subgroups of $H$ are normal subgroups of $G$.

As usual, subsets $S$ of $G$ will correspond to elements $\sum_{s \in S} s$, of the group algebra.
We define $D$ as follows:

$$D=\sum_{i=1}^{2^n-1} a_{S_i}M_i.$$

Let $a_i=a_{S_i}$. 
We first show that $(a_iM_i)^{-1}=a_i(H-M_i).$ But this is true if and only if 
$a_i^{-1}M_i=a_i(H-M_i)$ if and only if $M_i=a_i^2(H-M_i)$ if and only if $M_i=H-a_i^2M_i$. But this latter equation is true since $a_i^2 \in H$ and $a_i^2 \notin M_i$.

Thus we have: 
$$D^{-1}=\sum_{i=1}^{2^n-1}a_{S_i}(H-M_i).$$

Let $1\le i\ne j\le 2^n-1$; then, since $M_i, M_j$ are distinct maximal subgroups of $H \cong \mathcal C_2^n$, we have 
$$M_iM_j=2^{n-2}H,$$
so that for $1\le i\ne j\le 2^n-1$ we have
$$M_i(H-M_j)=2^{n-1}H-2^{n-2}H=2^{n-2}H.$$

We  use this to obtain:
\begin{align*}
D\cdot D^{-1}&=\left (\sum_{i=1}^{2^n-1} a_{S_i}M_i\right) \left (\sum_{i=1}^{2^n-1}a_{S_i}(H-M_i)\right )\\
&=\sum_{1\le i \ne j\le n}^{2^n-1}a_{S_i}M_ia_{S_j}(H-M_j) +
\sum_{1\le i \le n}^{2^n-1}a_{S_i}^2M_i(H-M_i)\\
&=2^{n-2}\sum_{1\le i \ne j\le n}^{2^n-1}a_{S_i}a_{S_j}H+
\sum_{1\le i \le n}^{2^n-1}a_{S_i}^2(2^{n-1}H-2^{n-1}M_i)\\
   &\tag {7.3}\label {e73}  =2^{n-2}\sum_{1\le i \ne j\le n}^{2^n-1}a_{S_i}a_{S_j}H+2^{n-1}
\sum_{1\le i \le n}^{2^n-1}a_{S_i}^2(H-M_i)
\end{align*}
   
   Since $|\mathfrak G_{n,k}|=2^{2n}, h=|H|=2^n$ we have
   \begin{align*}&
   k=\frac {2^{2n}-2^n}{2}=2^{n-1}(2^n-1),\quad 
\lambda= 2^{n-1}(2^{n-1}-1).
 \end{align*}   
   
   Returning to equation (\ref {e73}), in particular looking at the first sum of equation (\ref {e73}),
    we see that every non-trivial coset of $H$ occurs $2^n-2$ times in equation (\ref {e73}).
     Thus from equation (\ref {e73}) we see that the coefficient in $DD^{-1}$ of each element of that 
   coset is 
   $$2^{n-2}(2^n-2)=2^{n-1}(2^{n-1}-1)=\lambda,
   $$ as we desire.
   
   The second sum of equation (\ref {e73}) gives the contributions to the trivial $H$-coset. We rewrite it as
\begin{align*}\tag {7.4} \label {e74} 2^{n-1}
\sum_{1\le i \le n}^{2^n-1}a_{S_i}^2(H-M_i)=
   2^{n-1}
\sum_{1\le i \le n}^{2^n-1}(H-a_{S_i}^2M_i).
  \end{align*}
   But we are assuming that $a_{S_i}^2\notin M_i$, so we must have
   $H-a_{S_i}^2M_i=M_i.$ Thus equation   (\ref {e74}) is
   \begin{align*}
\tag {7.5} \label {e75}       2^{n-1}
\sum_{1\le i \le n}^{2^n-1}M_i.
  \end{align*}
  
  Now since the $M_i$ are distinct maximal subgroups, and there are $2^n-1$ of them,  we see that every maximal subgroup of $H\cong \mathcal C_2^n$
   is in the list $M_1,\dots, M_{2^n-1}$, and so one  has
  $$\sum_{1\le i \le 2^n-1}M_i=(2^n-1)\cdot 1 +(2^{n-1}-1)(H-1).$$
  
  Thus if $h' \in H, h' \ne 1,$ then the coefficient of $h'$ in equation equation (\ref {e75}) is 
  $$2^{n-1}(2^{n-1}-1)=\lambda,$$ as required. 
  
 The coefficient of $1$ in $D \cdot D^{-1}$ is then
 $$k^2-\lambda(|\mathfrak G_{n,k}|-1)=2^{2n-2}(2^{n-1}-1)^2-2^{n-1}(2^{n-1}-1)(2^{2n}-1),$$ which is equal to $k$, as required.
 Thus we have $D \cdot D^{-1}=\lambda (G-1)+k\cdot 1.$
 \qed\medskip

 \section {Another criterion for the existence of a relative skew Hadamard difference set group}

\begin{thm}\label{thmseth}
Given $H \lhd G$, $H \cong  \mathcal{C}_2^h, [G:H]=h,$ let $t_1 = 1, t_2, \dots, t_h$ be a transversal for $H$ in $G$. Then for each $1 \leq i \leq h$ there is a $j = j(i)$ such that $t_i^{-1} \in Ht_j$. 
Suppose there exist distinct maximal subgroups $H_2, \dots , H_h$ of $H$ such that

\noindent (1)  $(H_j)^{t_i^{-1}} = H_i,$ and 

\noindent (2) $t_it_j \in H_i$,

\noindent for all $2 \leq i \leq h$ where $j = j(i)$. Then $D = \sum_{i=2}^hH_it_i$ is a difference set such that $D = D^{-1}$.
\end{thm}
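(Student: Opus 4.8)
The plan is to verify the two defining properties in turn: first that $D=D^{-1}$, and then that $D$ satisfies the difference-set equation $DD^{-1}=\lambda(G-1)+k\cdot 1$ in $\mathbb{Q}G$, where by Lemma \ref{lem1} we have $|G|=h^2$, $k=h(h-1)/2$ and $\lambda=h(h-2)/4$. I would begin by recording the elementary bookkeeping: each $H_it_i$ has $|H_i|=h/2$ elements and lies in the non-trivial coset $Ht_i$, and the $t_i$ $(2\le i\le h)$ represent distinct cosets, so $D\cap H=\emptyset$ and $|D|=(h-1)\tfrac h2=k$. I would also note at the outset that the map $i\mapsto j(i)$ sending the coset of $t_i$ to that of $t_i^{-1}$ is an involution of $\{1,\dots,h\}$ fixing $1$, so $j(i)\ge 2$ when $i\ge 2$; and that, since $H$ is elementary abelian of order $h$, it has exactly $h-1$ subgroups of index $2$, whence the distinct subgroups $H_2,\dots,H_h$ are \emph{all} the maximal subgroups of $H$.

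For $D=D^{-1}$, compute $D^{-1}=\sum_{i=2}^h t_i^{-1}H_i$. Condition (1), which reads $t_iH_{j}t_i^{-1}=H_i$ for $j=j(i)$, rearranges to $t_i^{-1}H_i=H_{j(i)}t_i^{-1}$, so $D^{-1}=\sum_{i=2}^h H_{j(i)}t_i^{-1}$. I would then show each summand equals the term $H_{j(i)}t_{j(i)}$ of $D$: indeed $H_{j(i)}t_i^{-1}=H_{j(i)}t_{j(i)}$ holds iff $t_{j(i)}t_i\in H_{j(i)}$, and this is precisely condition (2) applied to the index $j(i)$, using $j(j(i))=i$. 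Since $i\mapsto j(i)$ permutes $\{2,\dots,h\}$, summing gives $D^{-1}=\sum_i H_{j(i)}t_{j(i)}=D$.

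For the difference-set equation, expand $DD^{-1}=\sum_{i,j=2}^h H_i\,t_it_j^{-1}\,H_j$ and split off the diagonal $i=j$. Each diagonal term is $H_it_it_i^{-1}H_i=H_iH_i=\tfrac h2 H_i$, so the diagonal equals $\tfrac h2\sum_{i=2}^h H_i$. Using that $H_2,\dots,H_h$ are all the index-$2$ subgroups of $H$, a count identical to that in the proof of Proposition \ref{lll} gives $\sum_{i=2}^h H_i=(h-1)\cdot 1+(\tfrac h2-1)(H-1)$, so the diagonal contributes $k\cdot 1+\lambda(H-1)$; that is, the diagonal \emph{alone} produces the identity coefficient $k$ and coefficient $\lambda$ on every non-identity element of $H$. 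Each off-diagonal term $H_i\,t_it_j^{-1}\,H_j$ with $i\ne j$ lies in the single non-trivial coset $Ht_it_j^{-1}$ and hence contributes nothing to $H$; so it remains only to prove that the off-diagonal sum equals $\lambda(G-H)$.

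The main obstacle is exactly this last step: showing the off-diagonal mass is distributed \emph{uniformly}, with coefficient $\lambda$, over every element outside $H$. I would fix a non-trivial coset $C=Ht_m$ and compute, for $g\in C$, the coefficient of $g$ in $DD^{-1}$ as the number of factorizations $g=xy^{-1}$ with $x,y\in D$. Only pairs $(i,j)$ with $t_it_j^{-1}\in C$ contribute, and for each such pair the local count is $\tfrac h4$ when the maximal subgroups $H_i$ and $gH_jg^{-1}$ (with $g=t_it_j^{-1}$) are distinct (their product then fills $H$ and they meet in $h/4$ elements), and is $\tfrac h2$ or $0$ according to a coset condition when they coincide. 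A global mass count already pins the total: there are exactly $h-2$ contributing pairs, each of total weight $h^2/4$, giving mass $(h-2)h^2/4=\lambda h$ spread over the $h$ elements of $C$. The delicate point is \emph{constancy} across $C$, and I expect conditions (1) and (2), together with the involution $i\mapsto j(i)$, to force the ``coincident'' contributions to pair into complementary half-cosets, whose indicator functions sum to a constant, exactly as the relation $a_{S_i}^2\notin M_i$ made the analogous sum collapse in the proof of Proposition \ref{lll}. Establishing this cancellation — equivalently, that the non-principal characters of $H$ annihilate the off-diagonal sum restricted to each coset — is where the real work lies.
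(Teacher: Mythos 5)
Your verification that $D=D^{-1}$ is correct, and is in fact more direct than the paper's (the paper instead counts occurrences of the identity in $D^2$, finds $\frac h2(h-1)=k=|D|$ of them, and deduces $D=D^{-1}$ from that); your diagonal computation $\frac h2\sum_{i=2}^h H_i=k\cdot 1+\lambda(H-1)$ also agrees with the paper. The gap is the step you yourself flag as ``where the real work lies'': you never prove that the off-diagonal sum is constant on each non-trivial coset, and the mechanism you predict --- that ``coincident'' contributions of weight $\frac h2$ on half-cosets pair up into complements, i.e.\ a cancellation detected by the non-principal characters of $H$ --- is not what happens. There is nothing to cancel: hypothesis (1) forces the coincident case to be empty.

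Here is the missing argument, which is exactly the paper's key observation. Since $H\triangleleft G$, conjugation by $t_i^{-1}$ is an automorphism of $H$, so it permutes the set $\{H_2,\dots,H_h\}$ of \emph{all} $h-1$ maximal subgroups of $H$ injectively. Hypothesis (1) says this conjugation carries $H_{j(i)}$ to $H_i$; hence for $\ell\ne j(i)$ it cannot also carry $H_\ell$ to $H_i$, i.e.\ $t_iH_\ell t_i^{-1}\ne H_i$. Using $D=D^{-1}$ (already proved) write $DD^{-1}=D^2=\sum_{i,\ell}H_it_iH_\ell t_\ell$; then every term with $\ell\ne j(i)$ is a product of \emph{distinct} maximal subgroups of the elementary abelian group $H$, so
$$H_it_iH_\ell t_\ell=H_i\left(t_iH_\ell t_i^{-1}\right)t_it_\ell=\frac h4\,Ht_it_\ell,$$
which is already uniform on the non-trivial coset $Ht_it_\ell$. (In your $\sum_{i,j}H_i(t_it_j^{-1})H_j$ formulation the same argument reads: with $g=t_it_j^{-1}$, hypothesis (1) at index $j$ gives $gH_jg^{-1}=t_iH_{j(j)}t_i^{-1}$, and this equals $H_i=t_iH_{j(i)}t_i^{-1}$ only if $j(j)=j(i)$, i.e.\ $i=j$.) Your own count of $h-2$ contributing pairs per non-trivial coset then gives coefficient $\frac h4(h-2)=\lambda$ on every element of $G\setminus H$, completing the proof. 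So the theorem does follow along your lines, but the ingredient you deferred is a two-line consequence of hypothesis (1) and normality of $H$, not a character-theoretic cancellation.
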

\begin{proof} We note that $H_2,\dots,H_h$ are all the maximal subgroups of $H$.
First, to show that $D = D^{-1}$, we show that the identity occurs $k$ times in the product $D^2$:
\begin{align*}
D^2 &= \left(\sum_{i=2}^hH_it_i \right) \left(\sum_{\ell=2}^hH_{\ell}t_{\ell} \right) = \sum_{i=2}^h \sum_{\ell=2}^h H_it_iH_{\ell}t_{\ell}. 
\end{align*}
Fix $2\le i\le h$ and let $j=j(i)$.
If $\ell \ne j$, then  $Ht_it_\ell \ne H$ so that $1 \notin H_it_iH_\ell t_\ell$. If $\ell=j$, then using (1) and (2) we have 
\begin{align*}
H_it_i(H_{j}t_{j}) &= H_i(t_iH_{j}t_i^{-1})t_it_{j}  \\
&= H_i^2t_it_j 
= \frac{h}{2} H_i 
\end{align*}

There are $h-1$ values of $i$, so   the identity occurs $\frac{h}{2}(h-1) = k$ times in $D^2$. Thus, $D = D^{-1}$. 

 Now we will show that $D \cdot D^{-1} = k + \lambda(G - 1)$. We have just shown that the identity occurs $k$ times in $D\cdot D^{-1}=D^2$.
Consider the product
\begin{align*}
D \cdot D^{-1} &=D\cdot D= \sum_{i=2}^h \sum_{\ell=2}^h H_it_iH_{\ell}t_{\ell}.
\end{align*}
When $t_it_j \in H$, we have, as above, 
\begin{align*}
H_it_iH_{j}t_{j} &= \frac{h}{2} H_i 
\end{align*}

 Each element of $H$ that is not the identity occurs in $\frac{h - 2}{2}$ of the $h - 1$ maximal subgroups $H_i,2\le i\le h,$ of $H\cong \mathcal C_2^n$. Thus, the coefficient, in $D^2$, of each element of $H$ that isn't the identity is $\frac{h}{2}(\frac{h - 2}{2}) = \lambda$.

Now, when $t_it_\ell \notin H$, we have, 
\begin{align*}
H_it_i(H_{\ell}t_{\ell}) &= H_i(t_iH_{\ell}t_i^{-1})t_it_{\ell} 
\end{align*}
We  observe that conjugation by $t_i^{-1}$ permutes $H_2,H_3,\dots,H_h$. Thus, since $t_iH_jt_i^{-1} = H_i$, and $j(i) \neq \ell$, we have that $t_iH_{\ell}t_i^{-1} \neq H_i$, so 
\begin{align*}
H_it_iH_{\ell}t_{\ell} &= H_i(t_iH_{\ell}t_i^{-1})t_it_{\ell} \\
&= \frac{h}{4}Ht_it_{\ell}.
\end{align*}

For $g \notin H$, each non-trivial coset $Hg$ occurs  $h-2$ times in the form $Ht_it_\ell$. So  $g$ will occur $\frac{h}{4} (h - 2) = \lambda$ times in $D^2$. Thus 
\begin{align*}
D \cdot D^{-1} &= k + \lambda(G - 1),
\end{align*}
which shows that $D$ is a difference set.
\end{proof}

\noindent {\bf Remark} A computer calculation \cite {Ma} shows that, of the $267$ groups of order $64$, at least $50$ of them satisfy the hypotheses of Theorem \ref {thmseth}. These include groups with  nilpotency classes $1,2,3,4$. There are also four groups of order $16$ that satisfy the hypotheses of Theorem \ref {thmseth}. 
\medskip

\begin{prop} \label{propex2} The group $G=\mathcal C_4^n$ satisfies the hypotheses of Theorem \ref {thmseth}.
\end{prop}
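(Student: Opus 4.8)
The plan is to verify the three hypotheses of Theorem \ref{thmseth} for $G=\mathcal{C}_4^n$ by exploiting commutativity, thereby collapsing everything onto a single combinatorial pairing, and then to produce that pairing. Throughout I would take $n\ge 2$ (the case $n=1$ is degenerate, since then $H\cong\mathcal{C}_2$ has only the trivial maximal subgroup). First I would fix notation: write $G=\mathcal{C}_4^n$ additively as $(\mathbb Z/4)^n$ and set $H=2G=\{g\in G:2g=0\}$, so that $H\cong\mathcal{C}_2^n$, $h:=|H|=2^n$, $[G:H]=2^n=h$, and $|G|=h^2$, matching the framework of Theorem \ref{thmseth}. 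Note that $g^2=2g\in H$ for every $g\in G$.

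Next I would observe that commutativity trivializes two of the hypotheses. Since $t_i^{-1}\equiv t_i\pmod H$ (because $t_i^2\in H$), the index $j(i)$ determined by $t_i^{-1}\in Ht_{j(i)}$ is forced to be $i$. Then condition (1), $(H_{j(i)})^{t_i^{-1}}=H_i$, reads $H_i=H_i$ because conjugation is trivial, and so holds automatically; and condition (2), $t_it_{j(i)}\in H_i$, reduces to $t_i^2\in H_i$. Thus the entire problem becomes: attach to each nontrivial coset a distinct maximal subgroup $H_i$ of $H$ so that $t_i^2\in H_i$. The representative is immaterial here, because $t_i^2=2t_i$ depends only on the coset: $2(g+h')=2g$ for $h'\in H$. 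Consequently the squaring map descends to a homomorphism $\sigma:G/H\to H,\ gH\mapsto 2g$, which has trivial kernel and hence, since $|G/H|=|H|$, is an isomorphism; under the natural identifications $G/H\cong H\cong V:=\mathbb{F}_2^n$ it is simply the identity.

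Writing each maximal subgroup of $H$ as a hyperplane $M_w=\{u\in V:u\cdot w=0\}$ for a nonzero $w\in V$, the requirement $t_i^2\in H_i$ becomes a requirement on $V^\times$: assign to each $y\in V^\times$ (the image $\sigma(c)$ of a nontrivial coset $c$) a nonzero $w(y)$ with $y\cdot w(y)=0$, and do so bijectively, since there are exactly $2^n-1$ nontrivial cosets and $2^n-1$ maximal subgroups and the $H_i$ must be distinct. This is the crux, and the step I expect to be the main obstacle: establishing the existence of the required bijection. I would phrase it as a perfect matching in the bipartite incidence graph on $V^\times\sqcup V^\times$ with $y\sim w$ exactly when $y\cdot w=0$. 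For fixed $y\ne 0$ the map $w\mapsto y\cdot w$ is a nonzero functional, so precisely $2^{n-1}-1$ nonzero $w$ are adjacent to $y$; by symmetry the graph is regular of degree $2^{n-1}-1$, which is positive for $n\ge 2$.

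A regular bipartite graph of positive degree meets Hall's condition (for any set $S$ of left vertices the $(2^{n-1}-1)|S|$ incident edges land in $N(S)$, forcing $|N(S)|\ge|S|$) and therefore admits a perfect matching; this matching supplies the desired bijection $c\mapsto H_c$, and the three hypotheses are verified. If a fully explicit pairing is wanted instead, one can identify $V$ with $\mathbb{F}_{2^n}$ and parametrize hyperplanes by the nondegenerate trace form $\langle x,y\rangle=\mathrm{Tr}(xy)$; then fixing $c\ne 0$ with $\mathrm{Tr}(c)=0$ (such $c$ exists for $n\ge 2$) and setting $\psi(y)=c/y$ gives an involution of $\mathbb{F}_{2^n}^\times$ with $\langle y,\psi(y)\rangle=\mathrm{Tr}(c)=0$, which is exactly the pairing required. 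Either route completes the proof of the proposition.
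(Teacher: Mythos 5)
Your proof is correct, and its first half coincides with the paper's reduction: you use commutativity to force $j(i)=i$ and make hypothesis (1) vacuous, reduce hypothesis (2) to $t_i^2\in H_i$, observe that squaring induces an isomorphism $G/H\to H$, and then use the vector/hyperplane duality on $V=\mathbb{F}_2^n$ to restate the problem as finding a bijection of $V^\times$ pairing each nonzero vector with a hyperplane containing it. Where you genuinely diverge is in producing that bijection. The paper constructs an explicit involution $\kappa$ by hand, with a case split on the parity of $n$: for $n$ even, $\kappa(v)=(1)-v$ with the all-ones vector $(1)$ fixed; for $n$ odd, the same map off a six-element exceptional set $S$ on which $\kappa$ is defined ad hoc. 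You instead note that the orthogonality graph on $V^\times\sqcup V^\times$ is bipartite and $(2^{n-1}-1)$-regular, hence satisfies Hall's condition and admits a perfect matching --- shorter and uniform in $n$, but non-constructive. Your alternative explicit pairing via $\mathbb{F}_{2^n}$, namely $\psi(y)=c/y$ with $\mathrm{Tr}(c)=0$, is uniform in $n$ and arguably cleaner than the paper's case analysis, at the cost of importing the field structure and nondegeneracy of the trace form (the paper's hyperplane correspondence uses the standard dot product, but any nondegenerate symmetric form parametrizes hyperplanes equally well, so this is harmless). A further small point in your favor: you flag that $n\ge 2$ is needed, and indeed for $n=1$ the unique maximal subgroup of $H\cong\mathcal{C}_2$ is trivial and cannot contain $t_2^2$, so the statement as literally written fails in that degenerate case, which the paper passes over silently.
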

\noindent {\it Proof} Let $a_1,a_2,\dots,a_n\in G$ be the generators of order $4$, and let $b_i=a_i^2, i=1,\dots,n$. Let $H=\langle b_1,b_2,\dots,b_n\rangle$. 
Let $t_0=1,t_1,t_2,\dots,t_{2^n-1}$ be a transversal for $H$.
We clearly have hypothesis (1) of Theorem \ref {thmseth}, since $G$ is abelian and $j(i)=i$. Thus we must order the maximal subgroups $M_1,M_2,\dots, M_{2^n-1}$ of $H$ so that we have (2): $t_i^2 \in M_i$ for $1\le i\le 2^n-1$. Note that $t_1^2,t_2^2,\dots,t_{2^n-1}^2$ are all the non-trivial elements of $H$. 

Now we think of $H$ as the additive group $V=\mathbb F_2^n$, so that maximal subgroups correspond to maximal subspaces of $V$. Further, 
each such maximal subspace $M$  is determined by a vector that we denote $v_M$, so that  we have
$M=\{v \in V:v\cdot v_M=0\}$.   Here $\cdot$ denotes the standard dot product on $V$, taking values in $\mathbb F_2$.

 Each element $t_i^2$ is then a vector that we denote by $w_i$, and we need to find a correspondence $w_i \leftrightarrow v_{M_{i}}$ where $w_i \in M_i$, i.e. where $v_i\cdot v_{M_i}=0$   This is thus equivalent to finding a bijection  
$\kappa:V^\times \to V^\times$ such that $v\cdot\kappa(v)=0$ for all $v \in V, v \ne 0$. We show how to do this. In fact we will define $\kappa$ to be an involution. 

Let $(1) \in V$ denote the all $1$ vector. 

Assume first that $n$ is even, and let
$$\kappa(v)=(1)-v, \text { for } v \ne 0,(1), \quad \kappa((1))=(1).$$
Since $n$ is even we have $(1)\cdot (1)=0$, while if $v \ne (1)$, then $v\cdot \kappa(v)=0$. This does the $n$ even case.

Now assume that $n$ is odd. Let
$$S=\{0,(1,0,0,\dots,0), (1,1,0,0,\dots,0),(0,1,1,\dots,1),(0,0,1,1,\dots,1),(1)\}\subset V.$$
Then for $v \in V \setminus  S$ we let   $\kappa(v)=(1)-v,$ noting that $(1)-v \in V \setminus S$.  We define $\kappa$ on the elements of $S$ as follows: 
\begin{align*}&
\kappa((1))=(1,1,0,\dots,0),\quad \kappa((1,0,\dots,0))=(0,0,1,1,\dots,1),\\&
\kappa((0,1,1,\dots,1,1))=(0,1,1,\dots,1,1).
\end{align*}
This defines $\kappa$ (as an involution) and one checks that, since $n$ is odd, this gives $v\cdot \kappa(v)=0$ for a $v \ne 0$.\qed\medskip

\section {Normal subgroups}

Suppose that $G$ is a relative skew Hadamard difference set with difference set $D$ and subgroup $H, h=|H|, h^2=|G|$.

Let $N\le  G, n=|N|,$ be normal, with $\pi:G \to K=G/N$ the quotient map.
Let $g_0=1,g_1,\dots,g_{u-1}$ be coset representatives for $K=G/N, u=|K|$. Then $\pi(G)=\frac {h^2} {u} K$.

Let $$x_i=|D \cap Ng_i|,\quad y_i=|H \cap Ng_i|,\quad  1 \le i \le u,$$ so that we can write 
$$\pi(D)=\sum_{i=1}^{u} x_ig_i, \quad \pi(H)=\sum_{i=1}^{u} y_ig_i.$$ 
It follows that
$$\pi(D^{-1})=\sum_{i=0}^{u-1} x_ig_i^{-1}.$$ 
Then from $D \cdot D^{-1}=\lambda G+(k-\lambda)1$ we get
\begin{align*} 
\tag {9.1}\label {t91}
\left (  \sum_{i=0}^{u-1} x_ig_i \right )\cdot \left (   \sum_{i=0}^{u-1} x_ig_i^{-1} \right ) =\lambda n \left ( \sum_{i=0}^{u-1} g_i \right ) + (k-\lambda) 1_K.
\end{align*}
We also have
\begin{align*}
\tag {9.2}\label {t92}  \sum_{i=0}^{u-1} x_i=k, \quad \sum_{i=0}^{u-1} y_i=h.\end{align*}
From $DH=\frac h 2 (G-H)$ we obtain
\begin{align*}\tag {9.3}\label{t93} 
\left (  \sum_{i=0}^{u-1} x_ig_i \right )\cdot \left (   \sum_{i=0}^{u-1} y_ig_i \right ) =\frac h 2 \left (\frac {h^2} u K  -   \left (   \sum_{i=0}^{u-1} y_ig_i \right ) \right ).
\end{align*}
Lastly, from $G=D+D^{-1}+H$ we get
\begin{align*}\tag {9.4}
\label {t94} \frac {h^2} u K = \sum_{i=0}^{u-1} x_ig_i +\sum_{i=0}^{u-1} x_ig_i ^{-1}+ \sum_{i=0}^{u-1} y_ig_i.
 \end{align*}

Each of equations (\ref {t91})-(\ref {t94}) determines $u=\frac {h^2} {n}$ elements of 
$R=\mathbb Q[x_1,\dots,x_u,$ $y_1,\dots,y_u]$, one for each element of $K$. 

Let $\mathcal I_K\subseteq R$ denote the ideal generated by the elements in equations (\ref {t91})-(\ref {t94}).\medskip 

\noindent {\bf Example 9.1: $K=\mathbb C_2^2$.} So suppose that $K=\mathcal C_2^2$. Then finding a Gr\"obner basis for $\mathcal I_K$ gives
\begin{align*}&
x_0=\frac 1 8 h^2-\frac h 2 ,\,\,x_1=x_2=x_3=\frac {1} 8 h^2;
y_0=h,y_1= y_2=y_3=y_4=y_5=0.
\end{align*} Since $y_0=h$ this shows that $H \le N$.
\medskip

A slightly more involved example is given by

\noindent {\bf Example 9.2: $K=\mathcal C_4$.} So suppose that $K=\mathcal C_4=\langle t\rangle,$ where the ordering of the elements is $g_1=1,g_2=t,g_3=t^2,g_4=t^3$. Then finding a Gr\"obner basis for $\mathcal I_K$ gives:
\begin{align*}&
x_0 -  \frac 1 2 y_2 -  \frac 1 8 h^2 +  \frac 1 2 h,\,\,\,
    x_1 + x_3 -  \frac 1 4 h^2,\,\,\,
    x_2 +  \frac 1 2 y_2 -  \frac 1 8 h^2,\\&
    x_3^2 -  \frac 1 4 h^2 x_3 -  \frac 1 8 h y_2 +  \frac 1 {64} h^4,\,\,\,
    x_3 y_2 -  \frac 1 2 h x_3 -  \frac 1 8 h^2 y_2 +  \frac 1 {16} h^3,
    y_0 + y_2 - h,\\&
    y_1,
    y_2^2 -  \frac 1 2 h y_2,
    y_3.
\end{align*}  This ideal has dimension zero. 
The fifth element of this basis factors as
$$\left (y_2 - \frac 1 2 h\right )\left (
    x_3 - \frac 1 8 h^2\right ),$$ and the eighth as $y_2  \left (y_2 - \frac 1 2 h\right)$.
By an elimination process one finds that the variable $x_3$ satisfies
$$\left (x_3 - \frac 1 8 h^2\right )\left (x_3 - \frac 1 8 h^2 - \frac 1 4 h\right )\left (x_3 - \frac 1 8 h^2 + \frac 1 4 h\right )=0.$$
Thus one of the factors in each of these three  products must be zero.
This gives $12$ cases to check, and a quick analysis shows that
 we have one of
\begin{align*}&
(i) \,\, x_0=x_1=x_2=\frac h 4\left  (\frac h 2 -1\right ),\,\,x_3=\frac h 4\left  (\frac h 2 +1\right ),
y_0=y_2=\frac h 2,\,\, y_1=y_3=0;\\&
(ii) \,\, x_0=x_2=x_3=\frac h 4\left  (\frac h 2 -1\right ),\,\,x_1=\frac h 4\left  (\frac h 2 +1\right ),
y_0=y_2=\frac h 2,\,\, y_1=y_3=0;\\
&(iii) \,\, x_0=\frac 1 8 h^2-\frac 1 2 h,\,\,x_1=x_2=x_3=\frac 1 8 h^2,\,\,
y_0=h, y_1= y_2=y_3=0.
\end{align*}
\medskip

\noindent{\bf Example 9.3: $K=\mathcal C_6$}:
For $G=\mathcal C_6=\langle t\rangle$ a similar argument shows that we have 
\begin{align*}&
 x_0=\frac 1 {12} h^2-\frac 1 2 h,\,\,x_1=x_2=x_3=x_4=x_5=\frac {1} {12} h^2;\\&
y_0=h,y_1= y_2=y_3=y_4=y_5=0.
\end{align*} Since $y_0=h$ we have $H \le N$.
\medskip

\noindent{\bf Example 9.4: $K=\mathcal C_9=\langle t \rangle$}:  One further idea occurs in this case: Calculating  $\mathcal I_K$ and doing an elimination shows that
the variable $y_1$ (corresponding to the element $t$) satisfies
\begin{align*}&
 y_1
  (  y_1 + \frac 1 9  (-\zeta ^{10} + \zeta ^4 + \zeta ^2 - 1) h)
  (    y_1 + \frac 1 9  (\zeta ^8 - \zeta ^4 - \zeta ^2 - 1) h)\times \\&
    (  y_1 + \frac 1 9  (\zeta ^{10} - \zeta ^8 - 1) h)
    (  y_1 + \frac 1 9  (\zeta ^{10} - \zeta ^4 - \zeta ^2 - 1) h)
    (  y_1 + \frac 1 9  (-\zeta ^{10} + \zeta ^8) h)\times\\&
    (  y_1 + \frac 1 9  (-\zeta ^{10} + \zeta ^8 - 1) h)
    (  y_1 + \frac 1 9  (\zeta ^{10} - \zeta ^4 - \zeta ^2) h)
    (  y_1 + \frac 1 9  (-\zeta ^8 + \zeta ^4 + \zeta ^2) h)\times\\&
    (  y_1 - \frac 1 9  h)
    (  y_1 + \frac 1 9  (-\zeta ^8 + \zeta ^4 + \zeta ^2 - 1) h)
    (  y_1 + \frac 1 9  (\zeta ^8 - \zeta ^4 - \zeta ^2) h)\times\\&
    (  y_1 + \frac 1 9  (-\zeta ^{10} + \zeta ^4 + \zeta ^2) h)
     ( y_1 + \frac 1 9  (\zeta ^{10} - \zeta ^8) h=0.
\end{align*}
Here $\zeta=\zeta_{36}$ is a primitive $36$th root of unity. Thus we see that $y_1$ has to be a root of one of these linear  factors.
However we know that $y_1 \in \mathbb Z$, so that the only possibilities are $y_1=0, y_1=\frac h 9$, thus greatly restricting the number of cases that we have to consider. One can apply this idea to many of the variables $x_i,y_j$ so as to reduce the number of cases that one has to consider. In this way we conclude that
$y_0=h$.
\medskip



One can use the above techniques to  show:
\begin{prop} \label{prop99}If
$N \triangleleft G$, where $G/N$ is one of 
$$\mathcal C_2^2, \mathcal C_2^3, S_3, \mathcal C_6, \mathcal C_9,  \mathcal C_3^2,   \mathcal C_{10}, D_{10},
\mathcal C_2 \times S_3, \mathcal C_2 \times \mathcal C_6,$$ then $H \le N$. \qed\end{prop}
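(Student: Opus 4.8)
The plan is to prove Proposition~\ref{prop99} by the uniform quotient-ring method that was already illustrated in Examples 9.1--9.4. For each of the listed quotients $K$, I would set up the commutative polynomial ring $R=\mathbb Q[x_1,\dots,x_u,y_1,\dots,y_u]$, form the ideal $\mathcal I_K$ generated by the coordinate-wise components of equations (\ref{t91})--(\ref{t94}), and verify that every solution of the resulting zero-dimensional system forces $y_0=h$, which says exactly that $H\cap N=H$, i.e.\ $H\le N$. The key structural facts that feed the system are $D\cdot D^{-1}=\lambda G+(k-\lambda)1$, $DH=\tfrac h2(G-H)$, $G=D+D^{-1}+H$, together with $\sum x_i=k$, $\sum y_i=h$ and $\pi(G)=\tfrac{h^2}{u}K$; these hold for any relative skew Hadamard difference set group by Theorem~\ref{thm1} and equation (3.8), independently of which $K$ we pick.

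The first step is to establish that the ideal $\mathcal I_K$ is zero-dimensional for each $K$ in the list. Once that is known, I would compute a Gr\"obner basis of $\mathcal I_K$ under a convenient term order and then eliminate all variables except a single well-chosen $y_j$ (or $x_j$), exactly as in Example 9.4. The output of the elimination is a one-variable polynomial that factors into linear factors over the cyclotomic field $\mathbb Q(\zeta_{|K|})$; each factor pins $y_j$ (or the relevant $x_j$) to a value of the form $\tfrac{c}{u}h$ with $c$ an algebraic integer. Invoking the integrality constraint $y_j\in\mathbb Z$ then eliminates every factor whose root is not rational, which dramatically cuts down the case analysis. Running this for each surviving factor and tracking the forced values of the remaining variables, I expect to reach $y_0=h$, $y_1=\dots=y_{u-1}=0$ in every admissible branch (as happened in Examples 9.1 and 9.3), or else to reach it after discarding branches that contradict $\lambda>0$ or the nonnegativity $x_i,y_i\ge0$ (as in the elimination that ruled out case (b) in \S5).

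For the non-abelian quotients $S_3$, $D_{10}$, $\mathcal C_2\times S_3$ the same scheme applies verbatim, but the diagonalization is replaced by decomposing $\mathbb C K$ into its matrix blocks via the irreducible representations of $K$; the scalar blocks again yield rational equations while the higher-dimensional blocks yield constraints whose rational parts (as in the passage to equation (\ref{t56})) give the integer conditions. I would organize these cases by first handling the abelian $K$ (where $R$ diagonalizes over $\mathbb Q(\zeta_{|K|})$ as in \S5 and \S9) and then the three non-abelian ones, reusing the block structure $\rho(H)=hI_d$ or $\rho(H)=0$ supplied by Theorem~\ref{thmrep}. The main obstacle will be managing the case explosion in the larger groups: for $K=\mathcal C_9$ the single-variable polynomial already has fourteen factors, and for $\mathcal C_3^2$, $\mathcal C_{10}$, and the order-twelve quotients the number of branches grows quickly, so the real work is the bookkeeping needed to check that each branch consistent with $y_j\in\mathbb Z$, $x_i,y_i\ge0$, and $\lambda>0$ still forces $y_0=h$. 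Since these verifications are finite Gr\"obner-basis and elimination computations, they are carried out with Magma \cite{Ma}, and the proposition follows once each of the ten quotients has been checked.
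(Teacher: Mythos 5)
Your proposal is correct and follows essentially the same route as the paper: the authors' own proof of Proposition~\ref{prop99} is exactly the Gr\"obner-basis/elimination method of Examples 9.1--9.4 applied to the ideal $\mathcal I_K$ from equations (\ref{t91})--(\ref{t94}), with the integrality of the $x_i,y_i$ (as in Example 9.4) pruning the branches, all carried out in Magma. The only cosmetic difference is your representation-theoretic block decomposition for the non-abelian quotients, which is unnecessary since the coordinate-wise equations defining $\mathcal I_K$ already make sense for any finite $K$.
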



  \section{Representations}
Suppose that $G$ is a relative skew Hadamard difference set with difference set $D$ and subgroup $H, h=|H|$.
  We recall that $D, D^{-1},G,H$ satisfy the equations
\begin{align*}
&(10.1) \,\, D^2=\lambda G+\frac h 2 H-(k-\lambda);\quad
(10.2) \,\, DD^{-1}=\lambda G+(k-\lambda);\\&
(10.3)\,\, HD=\frac h 2 (G-H).
\end{align*}

Let $\rho$ be a non-principal irreducible representation of $G$ with irreducible character $\chi$ and $d=\chi(1)$. We assume that $\rho$ is unitary.  Since $\chi$ is not principal we see from orthogonality of the character table that  $$\chi(G)=0.$$ Since $\rho$ is unitary we see that $\rho(D^{-1})=D^*$.  Now we know from Lemma \ref {lemdiag}
that $D,D^{-1},G, H$ pairwise commute, and so $\{\rho(D),\rho(D^{-1}),\rho(H),\rho(G)\}$ is a set of commuting normal matrices. Thus they are simultaneously diagonalizable, and we may assume that in fact they are diagonal matrices. 

Since $H \triangleleft G$ and $\rho$ is irreducible  it follows that $\rho(H)$ is a scalar matrix, which we write as 
$$\rho(H)=h_0\rho(1), \,\,\, h_0 \in \mathbb C.$$
  Since $H^2=hH$ we have $\rho(H)^2=h\rho(H)$, which shows that either $\rho(H)=0$ or $\rho(H)=h$; i.e. $h_0 \in \{0,h\}$. 
From (10.1) and (10.2) we see that 
$$
\rho(D)^2=\frac {h(2h_0-h)} 4\rho(1);\quad 
\rho(D)\rho(D)^*=(k-\lambda)\rho(1)=\frac {h^2} 4 \rho(1).$$

\noindent CASE 1: If $h_0=0$, then these give (where $i^2=-1$) 
$$\rho(D)=\mathrm{diag}\left (\varepsilon_1 i\frac h 2,\varepsilon_2  i\frac h 2,\dots,\varepsilon_d  i\frac h 2\right ).$$
Here $\varepsilon_i \in \{-1,1\}$.  In this case $\mu(\rho(D))$ divides $x^2+\frac {h^2} 4.$
\medskip 

\noindent CASE 2: If $h_0=h$, then (10.3) gives
$$h\rho(D) =-\frac {h^2}{2}\rho(1),
\text { so that } 
\rho(D)=-\frac h 2 \rho(1).
$$ But then we   have $\rho(D^{-1})=D^*=D$.
In this case $\mu(\rho(D))=x+\frac {h} 2.$
This gives Theorem \ref {thmrep}.\qed 




      
      \section {Some tests for relative skew Hadamard difference set groups} 
      
      We will say that a group $K$ is {\it $H$-swallowing}, if whenever we have a relative skew Hadamard difference set group $G$ with subgroup $H$ and there is a normal subgroup $N$ of $G$ with $G/N \cong K$, then $H \le N$.
      
    For a group $G$ let $\mathcal N(G)$ denote the intersection of all normal subgroups of $G$ that have prime index or whose quotients are $H$-swallowing. 
    Thus if $G$ is a $p$-group, then $\mathcal N(G)$ is just the Frattini subgroup of $G$. 
    Then by Theorem \ref {thmfr} we see that $H \le \mathcal N(G)$.  Thus we   have our first test:
    
   \noindent    (T1) $|\mathcal N(G)| \equiv 0 \mod h$. 
    
    Other tests that we can use are (see Theorem \ref {thm1}):
    
    \noindent   (T2) $H$ contains  the subgroup generated by all the involutions.
    
   \noindent    (T3) $G$ contains a normal  subgroup of order $h$ and index $h$.
    
    \noindent    (T4) $H$ does not have a complement in $G$.
    
    Using tests (T1), (T2), (T3), (T4)  enables us to eliminate  various groups from being relative skew Hadamard difference sets groups:
    
    \noindent {\bf The case  $h=6$} Of the $14$ groups of order $36$, only  the group $G_{36,1}$ passes the above tests. One can show by a short computer calculation that $G_{36,1}$  is not a relative skew Hadamard difference set group. 
    Here 
    \begin{align*}G_{36,1}&=\langle a,b,c,d|
    a^2 = b,
    b^2 = 1,
    c^3 = d^2,
    d^3 = 1,\\&\qquad 
    b^a = b,
    c^a = c^2  d,
    c^b = c,
    d^a = d^2,
    d^b = d,
    d^c = d\rangle.\end{align*}
This gives:
\begin{thm} \label{thm36} For $h=6$ there are no relative skew Hadamard difference set groups.
\qed\end{thm}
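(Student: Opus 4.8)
The plan is to combine the classification of the $14$ groups of order $36$ with the necessary conditions already established, thereby reducing the problem to a single group that is then dispatched by direct computation. By Lemma~\ref{lem1}, any relative skew Hadamard difference set group with $h=6$ has order $h^2=36$ and parameters $k=15$, $\lambda=6$; by Theorem~\ref{thm1} its subgroup $H$ is a \emph{normal} subgroup of order $6$ and index $6$ that contains every involution of $G$ (part (iv)) and admits no complement (part (v)). These are exactly the structural features that the tests (T1)--(T4) encode, so the strategy is to run each isomorphism type of order $36$ through them.

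First I would apply the three ``cheap'' tests (T2), (T3), (T4). Test (T3) requires a normal subgroup of order $6$ and index $6$, and test (T4) requires that it have no complement, which eliminates any order-$36$ group in which every order-$6$ normal subgroup splits off. Test (T2) is likely to be the most effective at this stage: since $H$ has order $6$ and must contain the subgroup generated by all involutions of $G$, that subgroup has order dividing $6$, hence $2$ or $6$; any group of order $36$ whose involutions generate a subgroup of order $4$, $12$, or similar (for instance with a Klein-four Sylow $2$-subgroup whose involutions cannot all be swept into a single order-$6$ subgroup) is ruled out immediately.

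Next I would invoke test (T1). Since $H\le\mathcal N(G)$ with $|H|=6$, we need $6\mid|\mathcal N(G)|$, where $\mathcal N(G)$ is the intersection of the normal subgroups of prime index together with those whose quotient is $H$-swallowing. Using Proposition~\ref{prop99}, the quotients $\mathcal C_6$, $S_3$, $\mathcal C_3^2$, and $\mathcal C_2^2$ are all $H$-swallowing; for a candidate of order $36$ these combine with the prime-index normal subgroups to force $\mathcal N(G)$ to be too small to contain a copy of $H$ unless $G$ has a very special shape. A careful case-by-case bookkeeping across the types surviving the previous step should leave exactly one group, the group $G_{36,1}$ exhibited in the statement.

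The genuine obstacle is the final group $G_{36,1}$, which passes all of (T1)--(T4): here the necessary conditions are no longer sufficient, so one must confront the defining equation directly. For this I would carry out an exhaustive search, pruned by the structure theory. By Theorem~\ref{thm1}(iii) any putative difference set $D$ is disjoint from $H$ and meets each of the five nontrivial cosets of $H$ in exactly $3$ points, with $D\cap D^{-1}=\emptyset$; moreover Theorem~\ref{thmrep} constrains the eigenvalues of $\rho(D)$ to lie in $\{\pm 3i,-3\}$ for each non-principal irreducible $\rho$. These conditions cut the candidates for $D$ down to a finite list, and checking the relation $DD^{-1}=\lambda G+(k-\lambda)\cdot 1$ against each survivor (a short Magma computation) shows that none satisfies it. Hence no relative skew Hadamard difference set group with $h=6$ exists.
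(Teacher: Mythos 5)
Your proposal follows essentially the same route as the paper: both reduce the problem via Lemma~\ref{lem1} to the $14$ groups of order $36$, use the tests (T1)--(T4) to eliminate all candidates except $G_{36,1}$, and then dispatch that remaining group by a direct computer (Magma) check of the difference set equation. The extra pruning you describe for the final search (coset intersection sizes from Theorem~\ref{thm111} and eigenvalue constraints from Theorem~\ref{thmrep}) is a sensible refinement of the paper's ``short computer calculation,'' but it does not change the structure of the argument.
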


    
        


    
         
   \section{Can we have $m>0$?}
   
   Let $G$ be a relative skew Hadamard difference set group with subgroup $H$ and difference set $D$. Assume, as in $\S 1$, that 
    (1) $D \cap D^{-1}=Hg_1\cup \dots \cup Hg_m$;
 (2) $G \setminus (D \cup D^{-1})=H \cup Hg_1'\cup \dots \cup Hg_m'$.

   We note that $0\le m \le (h-1)/2,$ but we can do better: 
   \begin{lem}\label{lemm} $m \le \frac {h-1} 4.$
  \end{lem}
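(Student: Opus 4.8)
The plan is to exploit the fact that $A := D \cap D^{-1} = Hg_1 \cup \dots \cup Hg_m$ is a union of $m$ \emph{full} cosets of $H$ contained in the difference set $D$, and then to count, inside $A$, the number of representations of a single fixed nontrivial element of $H$ as a quotient $xy^{-1}$. The point is that this count can be computed exactly and then compared against the uniform bound $\lambda$ coming from the difference-set property.

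First I would record the elementary facts. By hypothesis (2) of $\S 1$ the subgroup $H$ lies in $G \setminus (D \cup D^{-1})$, so $H \cap (D\cup D^{-1}) = \emptyset$; hence $A \cap H = \emptyset$, each coset $Hg_j$ is genuinely distinct from $H$, $|A| = hm$, and $A \subseteq D$. Since $h = |H| > 1$ by hypothesis, I may fix some $g_0 \in H$ with $g_0 \ne 1$.

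The main computation is the claim that the number of ordered pairs $(x,y) \in A \times A$ with $xy^{-1} = g_0$ is exactly $hm$. If $x \in Hg_i$ and $y \in Hg_j$ satisfy $xy^{-1} \in H$, then $g_i g_j^{-1} \in H$, forcing $Hg_i = Hg_j$; so only pairs lying in a common coset can contribute. Within one coset $Hg_j$, each choice of $y \in Hg_j$ determines $x = g_0 y$, which again lies in $Hg_j$ because $g_0 \in H$; this produces exactly $h$ pairs per coset, hence $hm$ pairs in total over the $m$ cosets making up $A$. Finally I would invoke the difference-set property: as $g_0 \ne 1$, the multiset $\{xy^{-1} : x,y \in D\}$ contains $g_0$ exactly $\lambda$ times, and since $A \subseteq D$ the $hm$ representations just found form a sub-multiset of these. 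Therefore $hm \le \lambda$, and plugging in $\lambda = \frac14 h(h-2)$ from Lemma \ref{lem1} and dividing by $h>0$ gives $m \le \frac{h-2}{4} \le \frac{h-1}{4}$.

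The argument is essentially self-contained, and I expect the only delicate step to be the \emph{exact} (not merely $\le$) count of within-$A$ representations of $g_0$: this rests squarely on $A$ being a union of full cosets together with the observation that a pair drawn from two distinct cosets can never yield an element of $H$. Note that this route in fact delivers the marginally sharper bound $m \le (h-2)/4$, which for even $h$ imposes the same integral constraint as the stated $m \le (h-1)/4$, so there is no loss in phrasing the conclusion as in the lemma.
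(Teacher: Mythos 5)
Your proof is correct, and it takes a genuinely different route from the paper's. The paper argues through the coset intersection numbers $x_i=|D\cap Hq_i|$: it invokes the second-moment identity $\sum_i x_i^2=k+\lambda(h-1)=h^2(h-1)/4$ (citing Theorem 7.1 of the Moore--Pollatsek book), notes that the $m$ full cosets inside $D$ force $\sum_i x_i^2\ge mh^2$, and divides by $h^2$ to get $m\le (h-1)/4$. You instead localize to a single element: fixing $g_0\in H\setminus\{1\}$, the injection $y\mapsto (g_0y,\,y)$ for $y\in A=D\cap D^{-1}$ produces at least $hm$ pairs in $D\times D$ with quotient $g_0$ (using precisely what you isolate: $A$ is a union of full right cosets $Hg_j$, each stable under left multiplication by $g_0\in H$), whence $hm\le\lambda$. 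Your route buys several things: it is entirely self-contained, needing only the definition of a difference set and Lemma \ref{lem1}; it yields the marginally sharper bound $m\le (h-2)/4$ (which, as you note, excludes no further integers once $h$ is even); and it works verbatim without any normality of $H$ --- relevant here because the paper proves $H\triangleleft G$ only for $m=0$, while its own proof of this lemma is phrased via the quotient map $G\to G/H$ and so tacitly treats $G/H$ as a group (the underlying counting identity is in fact valid for any subgroup, but your argument avoids the issue altogether). What the paper's second-moment approach buys in exchange is simultaneous control of all the intersection numbers $x_i$, in the same style it uses for quotient analyses in \S 5 and \S 9. One small simplification to your own write-up: the ``delicate'' exact count is not needed --- the injection alone gives $hm\le\lambda$, and only that inequality is used.
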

   \noindent{\it Proof}
      Let $\pi:G \to Q=G/H$ be the quotient map. Then $|Q|=h, \pi(G)=hQ, \pi(H)=h$. Let $Q=\{q_0,q_1,\dots,q_{h-1}\}$ and put 
   $$\pi(D)=\sum_{i=0}^{h-1} x_iq_i, \text { so that } 
   \pi(D^{-1})=\sum_{i=0}^{h-1} x_iq_i^{-1}.$$
   
   Now we know that $\sum_{i=0}^{h-1} x_i=k$ and  by Theorem 7.1 of \cite {mp} we have: $$\sum_{i=0}^{h-1} x_i^2=k+\lambda(h-1)=h^2(h-1)/4.$$ 
   But $D$ contains $m$ cosets of $H$ and so $\sum_{i=0}^{h-1} x_i^2\ge mh^2.$ This gives the result.\qed\medskip

   \begin{cor} \label{lemm468}
   For $h=4$ we have $m=0$ and for $h=6,8$ we can only have $m=0,1$.\qed 
 \end{cor}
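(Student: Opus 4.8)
The plan is to invoke Lemma \ref{lemm} directly and then read off the three cases, using only that $m$ is a non-negative integer. Since $m$ is defined as the number of cosets $Hg_1,\dots,Hg_m$ whose union is $D\cap D^{-1}$, it is automatically a non-negative integer. Hence the rational bound $m\le (h-1)/4$ supplied by Lemma \ref{lemm} can be strengthened to $m\le\lfloor (h-1)/4\rfloor$, and the corollary is just the evaluation of this floor for $h=4,6,8$.

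Concretely, I would substitute each value in turn. For $h=4$, Lemma \ref{lemm} gives $m\le 3/4$, and since $m\ge 0$ is an integer this forces $m=0$. For $h=6$ the bound reads $m\le 5/4$, so $m\in\{0,1\}$. For $h=8$ it reads $m\le 7/4$, again giving $m\in\{0,1\}$. That exhausts the statement.

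There is no genuine obstacle here: the whole content of the corollary is already carried by Lemma \ref{lemm}. The work in that lemma is the second-moment identity $\sum_{i=0}^{h-1}x_i^2=h^2(h-1)/4$ (Theorem 7.1 of \cite{mp}) combined with the observation that each of the $m$ full cosets sitting inside $D$ contributes $h^2$ to the left-hand sum, yielding $mh^2\le h^2(h-1)/4$ and hence the inequality $m\le(h-1)/4$. Once that inequality is in hand, the corollary reduces to elementary arithmetic, namely rounding $(h-1)/4$ down for the three small values of $h$.
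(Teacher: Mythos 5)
Your proposal is correct and is exactly the paper's argument: the corollary is stated with an immediate \qed precisely because it follows from Lemma \ref{lemm} together with the integrality of $m$, just as you describe. Your evaluations $m\le 3/4$, $m\le 5/4$, $m\le 7/4$ for $h=4,6,8$ and your recap of the second-moment bound $mh^2\le\sum_i x_i^2=h^2(h-1)/4$ match the paper's reasoning.
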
\medskip 


\section {Open questions}

1. Are there relative skew Hadamard difference set groups that are not $2$-groups.

2. Are there relative skew Hadamard difference set groups with $m>0$?


\begin{thebibliography}{HJ}

\bibitem{Ma}  W. Bosma and J. Cannon,  MAGMA  (University of
Sydney, Sydney, 1994).

\bibitem{cf}
Y.Q. Chen, T. Feng, \emph{Abelian and non-abelian Paley type group schemes}, preprint.

\bibitem{cg}
Coulter, Robert S., Gutekunst, Todd,
\emph{Special subsets of difference sets with particular emphasis on skew Hadamard difference sets.}
Des. Codes Cryptogr. 53 (2009), no. 1, 1--12. 

\bibitem{co}
H. Cohen, \emph{A Course in Computational Algebraic Number Theory}, GTM, vol. 138, Springer, 1996. 

\bibitem{da}  Davis, P.J., \emph{ Circulant matrices}, Chelsea, New York, (1994). 


\bibitem{dy}
C. Ding, J. Yuan, \emph{A family of skew Hadamard difference sets}, J. Combin. Theory Ser. A 113 (2006) 1526--1535. 

\bibitem{dw}
C. Ding, Z. Wang, Q. Xiang, \emph{Skew Hadamard difference sets from the Ree-Tits slice symplectic spreads in PG(3,32h+1)}, J. Combin. Theory Ser. A 114 (2007) 867--887.

\bibitem{ev} 
R.J. Evans, \emph{Nonexistence of twentieth power residue difference sets}, Acta Arith. 84 (1999) 397--402. 

\bibitem{fx} 
T. Feng, Q. Xiang, \emph{Strongly regular graphs from union of cyclotomic classes}, arXiv:1010.4107v2. MR2927417

\bibitem{mp} 
Moore, Emily H.; Pollatsek, Harriet S. \emph{Difference sets. Connecting algebra, combinatorics, and geometry.} Student Mathematical Library, 67. American Mathematical Society, Providence, RI, 2013. xiv+298 pp.

\bibitem{im} 
T. Ikuta, A. Munemasa, \emph{Pseudocyclic association schemes and strongly regular graphs}, European J. Combin. 31 (2010) 1513--1519. 


\bibitem{isa} 
Isaacs, I. Martin \emph{Finite group theory}. Graduate Studies in Mathematics, 92. American Mathematical Society, Providence, RI, 2008. xii+350 

\bibitem{km} 
Kesava Menon, P. \emph{On difference sets whose parameters satisfy a certain relation.}  Proc. Amer. Math. Soc. 13, (1962) 739--745. 

\bibitem {mp2}
Muzychuk, Mikhail; Ponomarenko, Ilia, \emph{Schur rings.} European J. Combin. 30 (2009), no. 6, 1526-1539. 

 

 \bibitem{sch} Schur I., \emph{Zur Theorie  der einfach transitiven Permutationsgruppen}, Sitz. Preuss. Akad. Wiss. Berlin, Phys-math Klasse, (1933), 598--623.

 
 \bibitem{wie} Wielandt, Helmut, \emph{Finite permutation groups}, Academic Press, New York-London, (1964), x+114 pages.


\bibitem{w1} \bysame . \textit{Zur theorie der einfach transitiven permutationsgruppen II}. Math.~Z., 52:384--393, (1949). 


 \end{thebibliography}
 \end{document}